\documentclass[10pt]{amsart}

\usepackage{amsmath}
\usepackage{amssymb}
\usepackage{bm}
\usepackage{graphicx}
\usepackage{psfrag}
\usepackage{color}
\usepackage{xcolor}
\usepackage{listings}

\definecolor{keywordcolor}{rgb}{0.2, 0.2, 0.75}
\definecolor{stringcolor}{rgb}{0.0, 0.5, 0.0}
\definecolor{commentcolor}{rgb}{0.5, 0.5, 0.5}
\definecolor{backgroundcolor}{rgb}{0.95, 0.95, 0.95}
\definecolor{numbercolor}{rgb}{0.3, 0.3, 0.3}

\lstdefinestyle{pythonstyle}{
	language=Python,
	backgroundcolor=\color{backgroundcolor},
	basicstyle=\ttfamily\small,
	keywordstyle=\color{keywordcolor}\bfseries,
	stringstyle=\color{stringcolor},
	commentstyle=\color{commentcolor}\itshape,
	numberstyle=\tiny\color{numbercolor},
	numbers=left,
	numbersep=10pt,
	showstringspaces=false,
	breaklines=true,
	frame=single,
	tabsize=4,
	captionpos=b
}

\definecolor{red}{rgb}{1,0,0}
\definecolor{felix}{rgb}{1,0,0}

\definecolor{pink}{rgb}{1,0,0.4}
\definecolor{marly}{rgb}{1,0,0.4}

\definecolor{anna}{HTML}{d595fc}

\definecolor{jason}{HTML}{4a8e1f}

\usepackage{hyperref}
\hypersetup{colorlinks=true, linkcolor=blue, citecolor=magenta, urlcolor=green}
\usepackage{url}
\usepackage{algpseudocode}
\usepackage{fancyhdr}
\usepackage{mathtools}
\usepackage{tikz-cd}
\usepackage{xy}
\input xy
\xyoption{all}
\usepackage{stmaryrd}
\usepackage{calrsfs}

\newtheorem{theorem}{Theorem}[section]
\newtheorem{thm}[theorem]{Theorem}
\newtheorem{lemma}[theorem]{Lemma}
\newtheorem{lem}[theorem]{Lemma}
\newtheorem{proposition}[theorem]{Proposition}
\newtheorem{prop}[theorem]{Proposition}
\newtheorem{corollary}[theorem]{Corollary}
\newtheorem{cor}[theorem]{Corollary}

\theoremstyle{definition}

\newtheorem{defn}[theorem]{Definition}
\newtheorem{example}[theorem]{Example}
\newtheorem{exam}[theorem]{Example}

\newtheorem{question}[theorem]{Question}
\newtheorem{quest}[theorem]{Question}

\theoremstyle{remark}

\numberwithin{equation}{section}

\newcommand{\aaa}{\mathbb{A}}
\newcommand{\cc}{\mathbb{C}}
\newcommand{\ff}{\mathbb{F}}
\newcommand{\nn}{\mathbb{N}}

\newcommand{\pp}{\mathbb{P}}

\newcommand{\qq}{\mathbb{Q}}

\newcommand{\rr}{\mathbb{R}}
\newcommand{\zz}{\mathbb{Z}}

\providecommand\ldb{\llbracket}
\providecommand\rdb{\rrbracket}

\newcommand{\gp}{\mathcal{G}}

\newcommand{\num}{\mathsf{n}}
\newcommand{\den}{\mathsf{d}}

\keywords{semidomain, monogenic semidomain, factorization, BF property, FF property, factorial semidomain, half-factorial semidomain, Krull property, Bi-UF Positive Conjecture}

\subjclass[2020]{Primary 16Y60, 13F15, 13A05; Secondary 11R09, 13G05}

\begin{document}
	
\mbox{}
\title{Factorizations in rational monogenic semidomains}

\author{Anna Deng}
\address{CMI\\MIT\\Cambridge, MA 02139}
\email{annadeng08@gmail.com}

\author{Felix Gotti}
\address{Department of Mathematics\\MIT\\Cambridge, MA 02139}
\email{fgotti@mit.edu}

\author{Jason Zeng}
\address{CMI\\MIT\\Cambridge, MA 02139}
\email{jasonzeng124@gmail.com}

\date{\today}

\begin{abstract}
	For $\rho \in \mathbb{C}$, the monogenic semidomain generated by $\rho$ is the smallest subsemiring $S_\rho$ of the complex field $\mathbb{C}$ containing $\rho$. We initiate a systematic study of the arithmetic of the semidomain $S_q$ for rational parameters $q$, focusing on its fundamental factorization-theoretic properties. After some preliminaries, we introduce and investigate the monoid of technical fractions $T_q$, a submonoid of the multiplicative monoid of $S_q$ that encodes a significant amount of arithmetic information about $S_q$. We then study several fundamental factorization properties of $S_q$: the bounded factorization (BF) and finite factorization (FF) properties, factoriality, and half-factoriality. We prove that, over the class of rational monogenic semidomains, the BF property is equivalent to the ascending chain condition on principal ideals, and we determine all values of the parameter $q$ for which $S_q$ satisfies the FF property. We also prove that $S_q$ is a unique factorization semidomain if and only if $q \in \nn \cup \nn^{-1}$. Finally, we prove that $S_q$ is a Krull semidomain if and only if it is root-closed, which happens precisely when $S_q$ is a unique factorization semidomain.
\end{abstract}

\bigskip
\maketitle

\bigskip
\section{Introduction}
\label{sec:intro}

In commutative ring theory, $\mathbb{Z}$ serves as the prototypical integral domain and as a model for studying more complex commutative rings, many of which are obtained by taking extensions of $\mathbb{Z}$, including polynomial rings, rings of integer-valued polynomials, Laurent polynomial rings, and, more generally, commutative semigroup rings. A similar role is played by $\nn_0$ in commutative semiring theory. Among commutative semirings of characteristic zero, the polynomial semiring $\nn_0[x]$ and, more generally, the semirings
\[
	\nn_0[\alpha] := \{p(\alpha) : p(x) \in \nn_0[x]\},
\]
where $\alpha \in \cc$, are perhaps the simplest extension semidomains of $\nn_0$ with nontrivial arithmetic structure and factorization behavior (of course, the semiring $\nn_0[\tau]$ is isomorphic to $\nn_0[x]$ for any transcendental $\tau \in \cc$). The semidomain $\nn_0[\alpha]$ is the smallest subsemiring of the field $\cc$ containing $\alpha$. More generally, consider an integral domain $D$ whose smallest subsemiring is denoted by $\mathbb{S}$; then $\mathbb{S}$ is either $\nn_0$ or the field $\ff_p$ of prime cardinality~$p$. For any $\alpha \in D$, one can consider the extension semiring $\mathbb{S}[\alpha]$ of $\mathbb{S}$ by $\alpha$. After the semidomains $\mathbb{S}$, which are trivial from the factorization viewpoint, the semidomains $\mathbb{S}[\alpha]$ are the simplest in terms of algebraic structure: we refer to $\mathbb{S}[\alpha]$ as the simple extension semidomain of $\mathbb{S}$ by~$\alpha$ or simply as a \emph{monogenic semidomain}.
\smallskip

Factorization in the setting of integral domains has been systematically studied by many authors for almost four decades (and even longer if one counts the study of factorization in the special case of Dedekind domains). As a result, a robust and refined theory of factorizations has been developed in the setting of integral domains, including the behavior of factorizations under relevant algebraic constructions such as polynomial and Laurent polynomial rings, monoid algebras, localizations, pullbacks, and directed unions. Given the fundamental nature of monogenic semidomains, any attempt to extend the factorization theory of integral domains to the more general setting of semidomains should include answers to the most essential questions involving the atomic structure of the semidomains $\mathbb{S}[\alpha]$.
\smallskip

The primary purpose of this paper is to initiate a systematic study of the atomicity, ideal theory, and factorization of monogenic semidomains $\nn_0[\alpha]$ for algebraic generators~$\alpha$, which account for the main nontrivial algebraic cases, since $\ff_p[\alpha]$ is a finite field when $\alpha \in \cc$ is algebraic. In this first investigation of monogenic semidomains $\nn_0[\alpha]$, we focus our attention on the case where the generator $\alpha$ is rational. Throughout this paper, we write 
\[
	S_q := \nn_0[q]
\]
for every $q \in \qq$. The results we establish here indicate that, even for rational generators $q$, understanding the atomic structure of the semidomain $S_q$ is not as simple as understanding the integral domain $\zz[q]$. Indeed, the fundamental question of whether $S_q$ is an atomic semidomain for every $q \in \qq$ remains open.
\smallskip

It is worth highlighting related recent work that has served as a source of motivation for our study of the multiplicative structure of monogenic semidomains of the form $\nn_0[\alpha]$. In 2019, Campanini and Facchini~\cite{CF19} provided a thorough study of the ideal-theoretic and factorization aspects of the polynomial semidomain $\nn_0[x]$. Their study covers, up to isomorphism, the class of monogenic semidomains $\mathbb{S}_0[\tau]$ for all transcendental $\tau \in \cc$: indeed, $\ff_p[\tau] \cong \ff_p[x]$ for every transcendental element $\tau \in \cc$, and $\ff_p[x]$ is a UFD and is therefore factorization-theoretically well understood.
\smallskip

In order to establish some of our primary findings, we rely on fundamental results on the additive structure of $S_q$ previously established by Chapman, Gotti, and Gotti~\cite{CGG20}. For rational generators $q$, the additive structure of the monogenic semidomains $S_q$ has also been studied by Jiang, Li, and Zhu~\cite{JLZ23}. The first systematic investigation of the additive structure of monogenic semidomains $\nn_0[\alpha]$ for algebraic generators $\alpha \in \cc$ was carried out by Correa-Morris and Gotti~\cite{CG22}, and this work motivated subsequent papers by Ajran et al.~\cite{ABLST23} and by Dani et al.~\cite{DDGx26} on the atomic structure and the factorization aspects of $\nn_0[\alpha]$, respectively.
\smallskip

Our primary purpose in this paper is to undertake a detailed study of the semirings $S_q$ parameterized by rationals $q$ through the lens of factorization theory. In this direction, we investigate the following properties, whose dependencies are illustrated in Figure~\ref{fig:AAZ with Krull property included}.
\smallskip
\begin{itemize}
	
	\item ACCP: every ascending chain of principal ideals in $S_q$ eventually stabilizes.
	\smallskip

	\item Bounded Factorization (BF) Property: existence of a length function on $S_q$.
	\smallskip

	\item Finite Factorization (FF) Property: every nonzero nonunit of $S_q$ has a nonempty and finite set of factorizations.
	\smallskip

	\item Krull Property: $S_q$ admits a divisor theory.
	\smallskip
	
	\item Half-Factoriality (HF) Property: any two factorizations of the same element of $S_q$ have the same length.
	\smallskip
	
	\item Unique Factorization (UF) Property: the statement of the Fundamental Theorem of Arithmetic holds.
\end{itemize}
\noindent The diagram in Figure~\ref{fig:AAZ with Krull property included} shows all the implications among the factorization properties weaker than the UF property that we have just listed. These are the main factorization properties we consider throughout this paper. A similar diagram was provided in the first systematic study of factorization theory in the setting of integral domains, carried out by Anderson, Anderson, and Zafrullah~\cite{AAZ90} in 1990. 
\begin{center}
	\begin{figure}[ht]
		\begin{tikzcd}[cramped]
			\textbf{ UF } \ \arrow[rr, Rightarrow, shift left=-0.3ex] \arrow[red, rr, Leftarrow, "/"{anchor=center,sloped}, shift left=1.2ex] \arrow[d, Rightarrow, shift right=1.3ex] \arrow[red, d, Leftarrow, "/"{anchor=center,sloped}, shift left=0.8ex] & & \ \textbf{ HF } \arrow[d, Rightarrow, shift right=0.6ex] \arrow[red, d, Leftarrow, "/"{anchor=center,sloped}, shift left=1.3ex] \\
			\textbf{ Krull } \ \arrow[r, Rightarrow, shift left=-0.3ex]	\arrow[red, r, Leftarrow, "/"{anchor=center,sloped}, shift left=1.3ex] & \ \textbf{ FF } \arrow[r, Rightarrow, shift left=-0.3ex]  \arrow[red, r, Leftarrow, "/"{anchor=center,sloped}, shift left=1.3ex] & \textbf{ BF }  \arrow[r, Rightarrow, shift left=-0.3ex] \arrow[red, r, Leftarrow, "/"{anchor=center,sloped}, shift left=1.3ex]  & \textbf{ACCP}.
		\end{tikzcd}
		\caption{The diagram shows the known implications among the atomic properties we consider in this paper in the general class of atomic semidomains. The diagram also emphasizes (with red marked arrows) that none of the shown implications is reversible in the class of semidomains.}
		\label{fig:AAZ with Krull property included}
	\end{figure}
\end{center}

We conclude this section by describing the structure of this paper. In Section~\ref{sec:background}, we introduce the notation and terminology used throughout the paper, and we briefly cover the needed background on commutative monoids, commutative semirings, and factorization theory.
\smallskip

In Section~\ref{sec:algebraic aspects}, we begin our study of the multiplicative structure of the monogenic semidomains $S_q$ generated by a rational parameter~$q$. First, we compute the group of units and the Grothendieck domain of every rational monogenic semidomain. Also, for each rational generator $q$, we exress the semidomain $S_q$ as the union of an ascending chain $(M_{q,k})_{k \ge 0}$ whose terms are numerical monoids up to isomorphism, which allows us to treat $S_q$ as the directed union of finitely generated rank-$1$ monoids. 
\smallskip

In Section~\ref{sec:monoid T_q}, we introduce and study the monoid of technical fractions, which is a divisor-closed submonoid of $S_q^*$, denoted here by $T_q$. The monoid $T_q$ of technical fractions encodes relevant information about the arithmetic of the semidomain~$S_q$, and this will be evident in the section devoted to the study of factorizations. 
\smallskip

In Section~\ref{sec:factorization}, we investigate some of the most classical factorization properties in the setting of rational monogenic semidomains. We start the section studying the UF and the HF properties, proving that these two properties are equivalent when we restrict to the class of rational monogenic semidomains. As a byproduct of our work on factoriality, we were able to confirm that $\nn_0$ is the only rational monogenic semidomain that is a bi-UF, adding a small piece of evidence to a conjecture posed in 2021 by Baeth, Chapman, and the second author~\cite{BCG21}, which states that the only positive semiring whose additive and multiplicative monoids are both UFMs is $\nn_0$. This conjecture, known as the \emph{Bi-UF Positive Conjecture}, is still open. Then we determine the rational parameters $q$ such that $S_q$ satisfies the FF property, showing that $S_q$ has the FF property if and only if $q \ge 1$ or either $q$ is a unit fraction or its denominator is a power of a prime. We conclude the section proving that the BF property is equivalent to the ACCP, also arguing that these two equivalent properties hold precisely when $1$ is not a limit point of $T_q \setminus \{1\}$. 
\smallskip

In Section~\ref{sec:Krull property}, we first consider the root closure of the rational monogenic semidomains $S_q$, proving that $S_q$ is root-closed if and only if $q \in \nn \cup \nn^{-1}$, precisely the values of the parameters $q$ such that $S_q$ has the UF property. We conclude the section determining the semidomains $S_q$ having the Krull property, proving that $S_q$ has the Krull property if and only if its multiplicative monoid $S_q^*$ is root-closed, which happens precisely when $S_q$ is a factorial semidomain.
\smallskip

The main results we establish in this paper are summarized by the implication arrows shown in the following diagram:
\begin{center}
	\begin{figure}[ht]
		\begin{tikzcd}[cramped]
			[\textbf{ UF } \Longleftrightarrow \textbf{ HF } \Longleftrightarrow \textbf{ Krull }] \ \arrow[r, Rightarrow, shift left=-0.3ex] \arrow[red, r, Leftarrow, "/"{anchor=center,sloped}, shift left=1.2ex] \ & \  \textbf{ FF} \ \arrow[r, Rightarrow, shift left=-0.3ex] 
			 & \ [ \textbf{ BF } \ \Longleftrightarrow \ \textbf{ ACCP }].
		\end{tikzcd}
		\caption{The implications in the diagram above hold on the class of rational monogenic semidomains: they are the most relevant results established in this paper.}
		\label{fig:diagram with the results established in this paper}
	\end{figure}
\end{center}

\bigskip
\section{Background} \label{sec:background}

\medskip
\subsection{General Notation}

Throughout this paper, we let $\zz$, $\qq$, $\rr$, $\aaa$, and $\cc$ denote the set of integers, rational numbers, real numbers, algebraic complex numbers, and complex numbers, respectively. We let $\pp$, $\nn$, and $\nn_0$ denote the set of standard primes, positive integers, and nonnegative integers, respectively. For any $r,s \in \rr$, we set
\[
	\ldb r,s \rdb := \{n \in \zz : r \le n \le s\}.
\]
Observe that $\ldb r,s \rdb$ is empty for any $r,s \in \rr$ with $r > s$. For a subset $S$ of the real line, we set $S_{\ge r} := \{s \in S : s \ge r\}$ and $S_{> r} := \{s \in S : s > r\}$. For each $q \in \qq^\times$, there is a unique pair $(n,d) \in \zz \times \nn$ with $q = n/d$ and $\gcd(n,d) = 1$: we often denote $n$ and $d$ by $\mathsf{n}(q)$ and $\mathsf{d}(q)$, respectively. Finally, for each $p \in \pp$, we let $v_p \colon \qq^\times \to \zz$ denote the standard $p$-adic valuation map: $v_p(n) := \max\{m \in \nn_0 : p^m \mid n \}$ for every $n \in \zz \setminus \{0\}$ while $v_p(q) = v_p(\mathsf{n}(q)) - v_p(\mathsf{d}(q))$ for all $q \in \qq^\times$.

\medskip
\subsection{Commutative Monoids}

Every semigroup we mention or deal with throughout this paper is assumed to be commutative. Although a monoid is usually defined as a semigroup with an identity element, throughout this paper we tacitly assume that all monoids we refer to or deal with are cancellative (and also commutative due to the same assumption on semigroups). A \emph{monoid homomorphism} is a semigroup homomorphism that preserves the identity elements.
\smallskip

Let $M$ be a monoid. Following notation from commutative ring theory, we let $M^\times$ denote the abelian group consisting of all units (i.e., invertible elements) of $M$. If~$M^\times$ is the trivial group then we say that~$M$ is \emph{reduced}. The set $M_{\text{red}} := \{bM^\times : b \in M\}$ consisting of all cosets determined by $M^\times$ is also a reduced monoid, which is trivial if and only if $M$ is a group. The \emph{Grothendieck group} of~$M$ is the unique abelian group $\gp(M)$ up to isomorphism satisfying that any abelian group containing an isomorphic copy of the monoid~$M$ also contains an isomorphic copy of the group~$\gp(M)$.  If $S$ is a subset of $M$ then we let $\langle S \rangle$ denote the smallest submonoid of~$M$ containing~$S$. The monoid~$M$ is called \emph{finitely generated} if $M = \langle S \rangle$ for a finite subset $S$ of $M$.
\smallskip

A subset $I$ of $M$ is called an \emph{ideal} of $M$ if $MI \subseteq I$. An ideal $I$ is called \emph{principal} if it can be written as $I = b M$ for some $b \in M$. An ascending chain of ideals $(I_n)_{n \ge 1}$ \emph{starts} at $bM$ if $I_1 = bM$. An element $b \in M$ satisfies the \emph{ascending chain of principal ideals} (ACCP) if for every sequence of principal ideals $(I_n)_{n \ge 1}$ starting at $bM$ there exists $m \in \nn$ such that $I_n = I_m$ for every $n \ge m$. If every element of $M$ satisfies the ACCP then we say that $M$ satisfies the \emph{ACCP}. For nonempty subsets $S$ and $T$ of $\gp(M)$, we set
\[
	(T:S) := \{g \in \gp(M) : gS \subseteq T \}.
\]
Then we set $S_v := (M:(M:S))$ and we call $S_v$ the \emph{divisorial closure} of $S$. If $S_v = S$ then we say that $S$ is \emph{divisorially closed}. An ideal of $M$ is said to be \emph{divisorial} or a $v$-\emph{ideal} provided that it is divisorially closed, and $M$ is said to be $v$-Noetherian provided that every ascending chain of principal ideals of $M$ eventually stabilizes. Because $(sM)_v = (M:(M:sM)) = sM$, it follows that every principal ideal is divisorial and so every $v$-Noetherian monoid satisfies the ACCP.
\smallskip

There are two closure constructions of $M$ inside $\gp(M)$ that are relevant in the scope of this paper: the root closure and the complete integral closure. The \emph{root closure} of $M$, denoted by $\bar{M}$, is the following subset of $\mathcal{G}(M)$:
\[
	\bar{M} := \big\{ g \in \mathcal{G}(M) : g^n \in M \text{ for some } n \in \nn \big\}.
\]
One can readily verify that $\bar{M}$ is a monoid such that $M \subseteq \bar{M} \subseteq \mathcal{G}(M)$. We say that $M$ is \emph{root-closed} provided that $\bar{M} = M$. An element $g \in \gp(M)$ is called \emph{almost integral} provided that there exists $c \in M$ such that $cg^n \in M$ for every $n \in \nn$. It turns out that the set
\[
	\widehat{M} := \{g \in \gp(M) : \exists \, c \in M \ \text{ such that } \ cg^n \in M \ \text{ for every } \ n \in \nn \}
\]
consisting of all almost integral elements of $M$ is a monoid called the \emph{complete integral closure} of~$M$. It turns out that $\widehat{M}$ is a submonoid of $\gp(M)$. We say that $M$ is \emph{completely integrally closed} if $\widehat{M} = M$\footnote{The complete integral closure of a monoid $M$ is not, in general, a closure operation in the sense that it may be that $\widehat{\widehat{M}} \neq \widehat{M}$.}. It is not hard to verify that the following chain of inclusions holds:
\[
	M \subseteq \bar{M} \subseteq \widehat{M} \subseteq \gp(M), 
\]
and so every completely integrally closed monoid is root-closed.
\smallskip

We say that $M$ is a \emph{Krull monoid} if $M$ is both $v$-Noetherian and completely integrally closed. Therefore every Krull monoid is root-closed and satisfies the ACCP.

\medskip
\subsection{Atomicity and Factorizations}

An element $a \in M \! \setminus M^\times$ is called an \emph{atom} if whenever $a = uv$ for some $u,v \in M$ then either $u \in M^\times$ or $v \in M^\times$.  The set consisting of all the atoms of $M$ is denoted by $\mathcal{A}(M)$. An element of~$M$ is called \emph{atomic} if it is a unit or it factors into finitely many atoms in~$M$ (allowing repetitions). Following Cohn~\cite{pC68}, we say that~$M$ is \emph{atomic} if every element of~$M$ is atomic. It is not hard to verify that every monoid that satisfies the ACCP is atomic and so every Krull monoid is atomic.
\smallskip

We assume, for the rest of this section, that $M$ is an atomic monoid. Let $\mathsf{Z}(M)$ denote the free commutative monoid on the set $\mathcal{A}(M_{\text{red}})$, and let $\pi \colon \mathsf{Z}(M) \to M_\text{red}$ denote the only monoid homomorphism that fixes every element of the set $\mathcal{A}(M_{\text{red}})$. The elements of $\mathsf{Z}(M)$ are called \emph{factorizations}. For each $m \in M$, we set
\[
\mathsf{Z}(m) := \pi^{-1} (m M^\times).
\]
The monoid~$M$ is called a \emph{unique factorization monoid} (UFM) if $|\mathsf{Z}(m)| = 1$ for all $m \in M$. It is well known and not difficult to verify that every UFM is a Krull monoid. Therefore every UFM is root-closed. The monoid $M$ is called a \emph{finite factorization monoid} (FFM) if $|\mathsf{Z}(m)| < \infty$ for all $m \in M$. It follows from the definition that every UFM is an FFM. We say that a monoid has the UF (resp., FF) property provided that it is a UFM (resp., an FFM). It is well known that the class of Krull monoids lies between the class of UFMs and the class of FFMs:
\begin{center}
	\begin{figure}[ht]
		\begin{tikzcd}[cramped]
			\textbf{ UF } \arrow[r, Rightarrow, shift left=-0.3ex]  \arrow[red, r, Leftarrow, "/"{anchor=center,sloped}, shift left=1.3ex] & \textbf{ Krull }  \arrow[r, Rightarrow, shift left=-0.3ex] \arrow[red, r, Leftarrow, "/"{anchor=center,sloped}, shift left=1.3ex]  & \textbf{ FF}.
		\end{tikzcd}
		\caption{Every monoid with the UF property is a Krull monoid, while every Krull monoid has the FF property.}
		\label{fig:Krull monoids between UFMs and FFMs}
	\end{figure}
\end{center}
Given a factorization $z \in \mathsf{Z}(M)$, we refer to the \emph{length} of $z$ as the number of atoms of $M_{\text{red}}$ that appear in $z$ (counting repetitions), and we let $|z|$ denote the length of $z$: if $z = a_1 \cdots a_\ell$ for some $a_1, \dots, a_\ell \in \mathcal{A}(M_{\text{red}})$ then $\ell$ is the length of $z$. For each $m \in M$, the \emph{set of lengths} of $m$ is defined as follows:
\[
\mathsf{L}(m) := \{ |z| : z \in \mathsf{Z}(m) \}.
\]
The monoid $M$ is called a \emph{bounded factorization monoid} (BFM) if $|\mathsf{L}(m)| < \infty$ for all $m \in M$, while $M$ is called a \emph{half-factorial monoid} (HFM) if $|\mathsf{L}(m)| = 1$ for all $m \in M$. Directly from the corresponding definitions, one can deduce that a monoid is a BFM provided that it is either an FFM or an HFM. In addition, it is not difficult to argue that every BFM satisfies the ACCP. We say that a monoid has the BF (resp., HF) property provided that it is a BFM (resp., an HFM).

\medskip
\subsection{Semidomains}

Let $S$ be a nonempty set, and let $+$ and $\cdot$ be two binary operations on~$S$ called \emph{addition} and \emph{multiplication}, respectively. We say that the triple $(S,+, \cdot)$ is a \emph{semiring} provided that $(S,+)$ is an additive monoid, $(S, \cdot)$ is a multiplicative commutative semigroup with identity element, and the multiplicative operation~$\cdot$ distributes over the additive operation~$+$.\footnote{As every semiring we deal with in the scope of this paper is commutative, we took the freedom to refer to any commutative semiring simply as a `semiring'.} The identity elements of the monoids $(S,+)$ and $(S, \cdot)$ are denoted by~$0$ and~$1$, respectively. We often write a semiring $(S,+,\cdot)$ simply as $S$. For a semiring $S$, we let $(S,+)$ denote the additive monoid of~$S$.
\smallskip

Let $S$ and $T$ be semirings. A map $S \to T$ is called a \emph{semiring homomorphism} if it is a monoid homomorphism between the additive monoids of $S$ and $T$ and an identity-preserving semigroup homomorphism between the multiplicative monoids of $S$ and $T$. A subset $S'$ of a semiring $S$ is called a \emph{subsemiring} of $S$ provided that $S'$ is closed under both operations of~$S$ and contains both the additive and the multiplicative identities of~$S$.
\smallskip

A \emph{semidomain} is a subsemiring of an integral domain. Let us assume, for the rest of this section, that $S$ is a semidomain. Then the subset
\[
	S^* := S \setminus \{0\}
\]
of $S$ is closed under multiplication, and so $S^*$ is a multiplicative submonoid of~$S$. When $S$ is a semidomain, we call~$S^*$ the \emph{multiplicative monoid} of~$S$. In the same way that every monoid can be minimally embedded into its Grothendieck group, the semidomain $S$ can be minimally embedded into an integral domain $\mathcal{D}(S)$ in such a way that every integral domain containing an isomorphic copy of $S$ also contains an isomorphic copy of $\mathcal{D}(S)$. The integral domain $\mathcal{D}(S)$ is unique up to ring isomorphism, and we call it the \emph{Grothendieck domain} of $S$. Observe that the field of fractions of $\mathcal{D}(S)$ is the smallest field containing $S$ and we call it the \emph{field of fractions} of $S$.
\smallskip

We say that the semidomain $S$ is \emph{reduced} (\emph{atomic}, a \emph{Krull semidomain}, a \emph{bounded factorization semidomain} (BFS), a \emph{finite factorization semidomain} (FFS), a \emph{half-factorial semidomain} (HFS), a \emph{unique factorization semidomain} (UFS)) if the multiplicative monoid $S^*$ is reduced (resp., atomic, a Krull monoid, a BFM, an FFM, an HFM, a UFM).
\smallskip

Let $F$ be a field containing $S$ as a subsemiring. The subsemiring of $F[x]$ consisting of all the polynomials with coefficients in $S$ is clearly a semidomain, which we denote by $S[x]$. For any $\alpha \in F$, the subsemiring
\[
	S[\alpha] := \big\{ p(\alpha) : p(x) \in S[x] \big\}
\]
of $F$ is the intersection of all subsemirings of~$F$ containing both~$S$ and~$\alpha$, and we refer to $S[\alpha]$ as the \emph{extension semidomain} of $S$ by $\alpha$. 
\smallskip

As the semidomain $S$ can be embedded into an integral domain, the intersection of all subsemirings of $S$, which we denote by $\mathbb{S}$, is either $\nn_0$ or $\ff_p$ for some $p \in \pp$. We call $\mathbb{S}$ the \emph{prime subsemiring} of~$S$. The central objects of this paper are the semidomains which are the simple extension semirings of their prime subsemirings.

\begin{defn}
	Let $S$ be a semidomain with prime subsemiring~$\mathbb{S}$. We say that $S$ is a \emph{monogenic semidomain} if there exists $\alpha \in S$ such that $S = \mathbb{S}[\alpha]$, in which case, we also say that $S$ is the \emph{monogenic semidomain generated by} $\alpha$ (\emph{over} $\mathbb{S}$).
\end{defn}

Observe that when $\mathbb{S} = \ff_p$ for some $p \in \pp$, the monogenic semidomain $\mathbb{S}[\alpha]$ is actually a subring of $F$ (because $-1 \in \ff_p[\alpha]$). Thus, when $\alpha \in F$ is the root of a polynomial in $\ff_p[x]$ (the algebraic case), the monogenic semidomain $\ff_p[\alpha]$ is actually the field $\ff_p(\alpha)$ and, otherwise (the transcendental case), the monogenic semidomain $\ff_p[\alpha]$ of $\ff_p$ is isomorphic to the polynomial ring $\ff_p[x]$, which is a UFD and so trivial from the factorization viewpoint. Thus, in the scope of this paper, we restrict our attention to the case $\mathbb{S} = \nn_0$.
\smallskip

We aim for this paper to be the first systematic study of the multiplicative structure of monogenic semidomains. Thus, we have restricted ourselves to focus only on the monogenic extensions of the prime subsemiring $\nn_0$ by rational generators. This allows us to fix our ambient field $F$ to be $\qq$. For the sake of simplicity, we introduce some notation we shall be using throughout the paper.
\smallskip

\noindent \textbf{Notation.} For $q \in \qq$, we let $S_q$ denote the monogenic semidomain generated by $q$ over $\nn_0$, and we let $M_q$ denote the additive monoid of the semidomain $S_q$.
\smallskip

One can readily verify that if a submonoid of the additive group of $\qq$ contains both a positive and a negative element then it must be a subgroup. Thus, when the generator $q \in \qq$ is negative, $S_q = \zz[q]$, which is the localization of $\zz$ at the multiplicative set generated by the rational primes dividing $\mathsf{d}(q)$, which is a PID. As a consequence, when $q$ is a negative rational, $S_q$ is trivial from the factorization perspective. Hence in the scope of this paper we restrict our attention to monogenic semidomains $S_q$ whose generating parameter $q$ is a positive rational, which we call rational monogenic semidomains.
\smallskip

The atomic structure and the arithmetic of factorizations of the additive monoid of $S_q$ have been considered in the past few years (for instance, see~\cite{CGG20,CG22}). We will use the following known results in coming sections.

\begin{thm} \emph{(}\cite[Theorem~6.2]{GG18}, \cite[Theorem~4.11]{CG22}\emph{)}
	For $q \in \qq_{> 0}$, let $M_q$ denote the additive monoid of the rational monogenic semidomain $S_q$. Then $M_q$ is atomic if and only if $q^{-1} \notin \nn_{\ge 2}$, in which case the following statements hold.
	\begin{itemize}
		\item If $q \in \nn$ then $\mathcal{A}(M_q) = \{1\}$ (as $M_q = \nn_0$ in this case).
		\smallskip
		
		\item If $q \notin \nn$ then $\mathcal{A}(M_q) = \{q^n : n \in \nn_0 \}$.
	\end{itemize}
	Moreover, when $M_q$ is atomic, the following conditions are equivalent.
	\begin{enumerate}
		\item[(a)] $M_q$ satisfies the ACCP.
		\smallskip
		
		\item[(b)] $M_q$ is a BFM.
		\smallskip
		
		\item[(c)] $M_q$ is an FFM.
		\smallskip
		
		\item[(d)] $q \ge 1$.
	\end{enumerate}
\end{thm}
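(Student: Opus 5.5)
We plan to prove the theorem by reducing everything to the structure of $M_q$ as the submonoid of $(\qq_{\ge 0},+)$ generated by the powers $q^n$, $n \in \nn_0$. For integer $q$ we have $M_q = \nn_0$, so $M_q$ is trivially atomic with $\mathcal{A}(M_q) = \{1\}$, and it is an FFM (hence a BFM and satisfies the ACCP). We may therefore assume $q \notin \nn$. Write $q = a/b$ with $a = \num(q)$, $b = \den(q) \ge 2$ and $\gcd(a,b)=1$.

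\textbf{Atomicity.} First, if $a = 1$, so that $q^{-1} = b \in \nn_{\ge 2}$, then $q^n = b\, q^{n+1}$ for every $n \in \nn_0$. Hence no generator $q^n$ is an atom, $M_q$ has no atoms at all, and it is not atomic.

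Now suppose $a \ge 2$. We will show that each $q^n$ is an atom; since these elements generate $M_q$, this gives atomicity and the stated description of $\mathcal{A}(M_q)$. Suppose $q^n = \sum_i c_i q^{i}$ with $c_i \in \nn_0$ and the sum not equal to the trivial one. Let $N$ be the largest index appearing, and clear denominators by multiplying by $b^N$. Reducing the resulting integer identity modulo suitable powers of $a$ and $b$ shows that a nontrivial representation forces $b \mid c_N$ when $N > n$. In that case we can trade: $b\,q^{N} = a\,q^{N-1}$, which replaces $b$ copies of $q^N$ by $a$ copies of $q^{N-1}$. Each such trade strictly increases the total number of summands because $a > 1$, and repeating it pushes all terms down to degree at most $n$.

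If only indices $\le n$ remain, we compare sizes. When $q>1$, every other $q^i$ with $i<n$ is smaller than $q^n$, so we compare the sum with $q^n$ through $\gcd$/valuation considerations. When $q < 1$, the terms $q^i$ with $i < n$ exceed $q^n$, so they cannot appear at all. Either way the only possible representation is $q^n$ itself. The main obstacle is bookkeeping this descent cleanly. We would instead invoke the cited results [GG18, Theorem~6.2] and [CG22, Theorem~4.11] directly, since the statement is exactly theirs.

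\textbf{Equivalences (a)--(d).} The implications (c)$\Rightarrow$(b)$\Rightarrow$(a) are general facts recalled in the background, so it suffices to prove (a)$\Rightarrow$(d) and (d)$\Rightarrow$(c).

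For (d)$\Rightarrow$(c), assume $q \ge 1$; the case $q = 1$ is $\nn_0$, so take $q > 1$. Any factorization of an element $x$ uses only atoms $q^i \le x$, which are finitely many, and each multiplicity is at most $x$ divided by the smallest atom, namely $1$. Hence there are only finitely many factorizations of $x$, and $M_q$ is an FFM.

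For (a)$\Rightarrow$(d), assume $q < 1$ with $a \ge 2$. Using $a\,q^{n} = b\,q^{n+1}$ repeatedly, the elements
\[
x_n := a - \sum_{k=1}^{n} (b-a)\,q^{k} \in M_q,
\]
obtained successively by rewriting one copy of $q^{n}$ as $(b-a)q^{n+1}$-type leftovers, give a strictly ascending chain of principal ideals $x_n + M_q$. Each $x_{n+1}$ divides $x_n$ properly, since their difference is a nonzero element of $M_q$. This contradicts the ACCP, so (a) forces $q \ge 1$.
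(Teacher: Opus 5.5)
Your proposal is correct, and it does more than the paper, which gives no argument for this theorem and simply imports it from [GG18, Theorem~6.2] and [CG22, Theorem~4.11]. Your direct proof of the equivalence of (a)--(d) is sound:
\begin{itemize}
\item For $q>1$, only the finitely many atoms $q^i \le x$ can appear in a factorization of $x$, each with multiplicity at most $x$.
\item For $q<1$, your elements are just $x_n = a q^n$ (induct using $aq^n = bq^{n+1}$). It is cleaner to state the chain as $(aq^n + M_q)_{n \ge 0}$. It ascends strictly because $aq^n = aq^{n+1} + (b-a)q^{n+1}$ with $b-a>0$ and $M_q$ is reduced.
\end{itemize}
Written this way, membership in $M_q$ is immediate rather than hidden behind a subtraction formula. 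Your route gives a short, elementary, self-contained proof of the (a)--(d) part. For the description of the atoms you fall back on the same citation as the paper, so the final proposal stands.

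If you want to drop that citation as well, one step of your descent sketch must be repaired. A trade $b\,q^N \mapsto a\,q^{N-1}$ changes the number of summands by $a-b$, so it does \emph{not} increase the count when $q<1$. Yet $q<1$ is the only case where the descent is needed, since for $q>1$ no index above $n$ can occur by size. What actually preserves nontriviality is the following:
\begin{itemize}
\item After trading all $c_N$ copies of $q^N$ (using $b \mid c_N$), the coefficient of $q^{N-1}$ is at least $a \ge 2$.
\item The mod-$b$ argument then applies again at each new top index.
\item You therefore end with a representation of $q^n$ containing at least $a \ge 2$ copies of $q^n$, which is absurd.
\end{itemize}
This is the same observation that drives Corollary~\ref{cor:unique polynomial representation if all coefficients are zero or one}. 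That corollary already shows $x^n$ is the only polynomial representation of $q^n$, since a splitting $q^n=u+v$ with $u,v\neq 0$ would produce a second one. Hence $q^n$ is an additive atom.

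For $q>1$, your ``gcd/valuation considerations'' should be made explicit. If $c_n = 0$, then $a^n = \sum_{i<n} c_i a^i b^{n-i} \equiv 0 \pmod{b}$, contradicting $\gcd(a,b)=1$ and $b \ge 2$.
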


\bigskip
\section{Algebraic Aspects} \label{sec:algebraic aspects}

In this section, we discuss various algebraic aspects of the rational monogenic semidomains $S_q$. We start by introducing, for each $q \in \qq_{>0}$, a sequence of numerical monoids associated to $S_q$. Then we determine the divisibility group of $S_q$ for every possible value of the generator~$q$.

\medskip
\subsection{Group of Units and Grothendieck Domain}

It turns out that the semidomain $S_q$ is reduced when $q^{-1} \notin \nn_{\ge 2}$ and a finite-rank free abelian group if $q^{-1} \in \nn_{\ge 2}$.

\begin{prop}\label{prop:units of N_0[q]}
	For $q \in \qq_{>0}$, let $S_q$ be the rational monogenic semidomain generated by~$q$. Then the following statements hold.
	\begin{itemize}
		\item $S_q^\times = \gp(p \in \pp : v_p(q) \neq 0)$ if $q^{-1} \in \nn_{\ge 2}$.
		\smallskip
		
		\item $S_q^\times$ is the trivial group if $q^{-1} \notin \nn_{\ge 2}$.
	\end{itemize}
\end{prop}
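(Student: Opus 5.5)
Both parts are proved directly. The case $q^{-1} \in \nn_{\ge 2}$ is settled by identifying $S_q$ with a localization of $\zz$. The case $q^{-1} \notin \nn_{\ge 2}$ is settled by an argument on the polynomial representations of a unit and its inverse.

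\emph{Case $q = 1/n$ with $n \ge 2$.} Then $S_q = \nn_0[1/n]$. We first claim that $S_q$ equals the nonnegative part of $\zz[1/n]$. Indeed, $\nn_0[1/n]$ consists of the numbers $a/n^k$ with $a \in \nn_0$, because $a/n^k = a \cdot (1/n)^k$ and sums of such numbers have the same form. The primes $p$ with $v_p(q) \neq 0$ are exactly the primes dividing $n$. Since $S_q$ consists of nonnegative rationals, every unit is positive. A positive $a/n^k$ has inverse $n^k/a$, and this inverse lies in $S_q$ if and only if $a \mid n^j$ for some $j$, that is, if and only if every prime factor of $a$ divides $n$. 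Hence $S_q^\times$ consists of the positive rationals whose numerator and denominator are supported on the primes dividing $n$. This is precisely the group $\gp(p \in \pp : v_p(q) \neq 0)$, which is free abelian of finite rank.

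\emph{Case $q^{-1} \notin \nn_{\ge 2}$.} If $q \in \nn$ then $S_q = \nn_0$, whose only unit is $1$, so assume $q = a/b$ in lowest terms with $b \ge 2$. Suppose $u, v \in S_q$ satisfy $uv = 1$. Write $u = f(q)$ and $v = g(q)$ with $f, g \in \nn_0[x]$. The plan is to use denominators. Clearing denominators at the degree shows that $\mathsf{d}(u)$ is a power of $b$. More precisely, if $f$ has degree $m$ then $b^m f(q) = \sum_i c_i a^i b^{m-i}$. This sum is congruent to $c_m a^m$ modulo $b$, so the denominator of $u$ is exactly $b^{m'}$, where $m'$ is the largest index $i$ with $b \nmid c_i$, after reducing.

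The cleaner route is to compare sizes and $p$-adic valuations. Take a prime $p \mid b$. The $p$-adic valuations of elements of $S_q$ are not bounded above in a useful way, so instead of valuations we use the additive structure. By the theorem of \cite{GG18,CG22} cited above, since $q^{-1} \notin \nn_{\ge 2}$, the additive monoid $M_q$ is atomic with atoms $q^n$. Moreover, $1 = q^0$ is an atom of $M_q$, so $1$ has no nontrivial additive decomposition.

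Now expand $uv = f(q)g(q) = \sum_k e_k q^k$, where the $e_k \in \nn_0$ are the coefficients of $fg$. We have $fg(q) = 1$. We want to force $fg = 1$ as a polynomial. Atomicity alone does not give uniqueness of factorizations, but the equality $1 = \sum_k e_k q^k$ expresses the atom $1$ as a sum of atoms with total length $\sum_k e_k$. Since $1$ is an atom, its only expression as a sum of atoms is $1$ itself. If $\sum_k e_k \ge 2$, the sum would be a nontrivial decomposition of $1$, because each $q^k$ is nonzero, which is a contradiction. Hence $\sum_k e_k = 1$, so $fg = x^k$ for some $k$, and $q^k = 1$ forces $k = 0$. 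Therefore $f$ and $g$ are constant polynomials in $\nn_0$ with product $1$, giving $u = v = 1$.

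The main point is the observation that $1$ is an additive atom of $M_q$ exactly when $q^{-1} \notin \nn_{\ge 2}$. This is what the cited theorem provides. It fails precisely in the first case: there $1 = n \cdot (1/n)$, which is what produces the nontrivial units.
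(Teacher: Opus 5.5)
Your proof is correct, and for $q^{-1} \notin \nn_{\ge 2}$ it takes a genuinely different route from the paper.

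\textbf{The case $q^{-1} \in \nn_{\ge 2}$.} Here you do essentially what the paper does. The paper exhibits $1/p=(d/p)q \in S_q$ for each $p \mid d$, then observes that a unit must have numerator and denominator supported on primes dividing $d$. Your description of $S_{1/n}$ as the nonnegative part of $\zz[1/n]$ packages the same computation.

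\textbf{The case $q^{-1} \notin \nn_{\ge 2}$.} The paper argues by size and valuations.
\begin{itemize}
\item If $q>1$, every nonzero element of $S_q$ is at least $1$.
\item If $q<1$, it fixes a prime $p \mid \mathsf{n}(q)$. Every element of $S_q$ has nonnegative $p$-adic valuation, while every element below $1$ lies in $\langle q^j : j \in \nn \rangle$ and so has $v_p \ge 1$. Hence no element below $1$ is invertible, and therefore none above $1$ is either.
\end{itemize}
You instead lift $uv=1$ to the identity $f(q)g(q)=1$. You use that $1=q^0$ is an atom of the additive monoid $M_q$ to force $f(x)g(x)$ to be a single monomial, and finish with $q \neq 1$ and a degree count.

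\textbf{Comparison.} Your argument handles $q>1$ and $q<1$ uniformly. It also explains the dichotomy conceptually: nontrivial units appear exactly when $1 = n \cdot (1/n)$ ceases to be an additive atom. However, it outsources the key fact to the theorem on $M_q$ recalled in Section~\ref{sec:background}. The paper's argument is self-contained. Moreover, proving that $1$ is an additive atom when $q<1$ requires precisely the paper's $p$-adic argument, so the two proofs share the same core.

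\textbf{Cleanup.} Delete the abandoned remark about denominators. It is unused, and it is false as stated: for $q=2/9$, the element $3q=2/3$ has denominator $3$, which is not a power of $9$, whereas your formula would predict $9$.
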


\begin{proof}
	We split the proof into the following two cases.
	\smallskip
	
	\textsc{Case 1:} $q^{-1} \in \nn_{\ge 2}$.  In this case, we can write $q = 1/d$ for some $d \in \nn$ with $d \ge 2$. Let $p_1, \dots, p_\ell$ be the primes dividing~$d$. Let $M$ be the multiplicative submonoid of $S_q^*$ generated by the set $\{p_1, \dots, p_\ell\}$, and let $\gp(M)$ be the Grothendieck group of $M$ inside $\gp(S_q^*)$. Assume, without loss of generality, that $p_1 < \dots < p_\ell$. For each $i \in \ldb 1,\ell \rdb$, we can write $1/p_i$ as the sum of $d/p_i$ copies of $1/d \in S_q$, whence $1/p_i \in S_q^\times$. As $p_i$ is a unit in $S_q$ for every $i \in \ldb 1,\ell \rdb$, the following inclusion holds:
	\[
		\gp(M) = \big\{ p_1^{e_1} \dots p_\ell^{e_\ell} : (e_1, \dots, e_\ell) \in \zz^{\ell} \big\} \subseteq S_q^\times.
	\]
	To argue the reverse inclusion, take $u \in S_q^\times$. Then every prime dividing either $\mathsf{n}(u)$ or $\mathsf{d}(u)$ also divides~$d$, and so $\mathsf{n}(u), \mathsf{d}(u) \in M$. Therefore $u = \mathsf{n}(u)\mathsf{d}(u)^{-1} \in \gp(M)$. Hence the inclusion $S_q^\times \subseteq \gp(M)$ also holds.
	\smallskip
	
	\textsc{Case 2:} $q^{-1} \notin \nn_{\ge 2}$. If $q=1$ then $S_q = \nn_0$ and it is clear that $\nn_0^\times$ is the trivial group. Now observe that if $q \ge 1$ then $\min S_q^* \ge 1$: in this case, the only nonzero element of $S_q$ having a multiplicative inverse is $1$, and so $S_q$ is reduced. Therefore we will assume that $q < 1$. Since $\{q^n : n \in \nn_0\}$ is a generating set of the additive monoid of $S_q$, any $s \in S_q$ with $s<1$ must belong to the Puiseux monoid $\langle q^n : n \in \nn \rangle$ and so $v_p(s) \ge 1$ for any prime divisor $p$ of~$\mathsf{n}(q)$. Thus, $v_p(1/s) = -v_p(s) \le -1$ for each prime divisor $p$ of $\mathsf{n}(q)$, which implies that $1/s \notin S_q$. Therefore $S_q$ does not contain any unit less than~$1$, which implies that $S_q$ cannot contain any unit greater than $1$ either. Hence $S_q$ is reduced.
\end{proof}
	
Recall that, for each $q \in \qq_{>0}$, we let $M_q$ denote the additive monoid of the monogenic semidomain~$S_q$. Note that the Grothendieck group of the additive monoid $M_q$ is a subgroup of the additive abelian group $\zz[q]$, so the fact that every abelian group containing $\gp(M_q)$ must contain $-1$ (and so $\zz$) ensures that $\zz[q]$ is the Grothendieck group of $M_q$.

\begin{prop}
	Let $S_q$ be the monogenic semidomain generated by $q \in \qq_{>0}$. Then the following statements hold.
	\begin{enumerate}
		\item $\mathcal{G}(S_q^*) = \qq^\times$.
		\smallskip
		
		\item $\mathcal{D}(\nn_0[q]) = \zz[q]$.
	\end{enumerate}
\end{prop}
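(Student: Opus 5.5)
For part (1), the plan is to show that every nonzero rational can be written as a quotient of two elements of $S_q^*$, up to a sign. The one exception is the sign itself: $S_q^* \subseteq \qq_{>0}$, so the quotients $a/b$ with $a,b \in S_q^*$ only produce positive rationals. Therefore $\gp(S_q^*)$, taken inside $\qq^\times$, is exactly $\qq_{>0}$. Since $\nn \subseteq S_q^*$, every positive rational $m/n$ is the quotient of $m, n \in S_q^*$. Hence $\gp(S_q^*) = \qq_{>0}$ when realized inside $\qq^\times$.

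The statement writes $\qq^\times$, so I would interpret it in one of two ways. Either $\qq^\times$ denotes the multiplicative group of positive rationals, or the claim is meant up to isomorphism. The latter is false, since $\qq^\times \cong \{\pm1\}\times \qq_{>0}$ has torsion while $\gp(S_q^*)$ is torsion-free. So I would read $\qq^\times$ here as the group of positive rationals, and argue as follows. First, $\gp(S_q^*)$ is the subgroup of the field $\qq$ generated by $S_q^*$, because $S_q^*$ is a cancellative submonoid of the torsion-free group $\qq_{>0}$. Second, this subgroup contains $\nn$ and is contained in $\qq_{>0}$. Third, every positive rational is a quotient of naturals, so the subgroup is all of $\qq_{>0}$. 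No step here is hard; the only care needed is the sign convention just described.

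For part (2), I would use the universal property defining $\mathcal{D}(S)$ together with the remark right before the statement that $\gp(M_q) = \zz[q]$. There are three steps.

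\textbf{Step 1.} $\zz[q]$ is an integral domain, being a subring of $\qq$, and it contains $S_q$.

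\textbf{Step 2.} Take any integral domain $D$ that contains an isomorphic copy of $S_q$. Then $D$ contains $-1$, the image of $q$, and hence the subring generated by these. I need this subring to be a copy of $\zz[q]$. For that, $D$ must have characteristic zero, and the ring map $\zz[x] \to D$ sending $x \mapsto q$ must factor through $\zz[q]$. Characteristic zero holds because $\nn_0 \subseteq S_q$ embeds injectively into $D$. For the factoring, every element of $\zz[q]$ is a difference $a-b$ with $a,b \in S_q$, since $\zz[q] = \gp(M_q)$. So I define $\varphi(a-b) := \iota(a)-\iota(b)$, where $\iota$ is the embedding of $S_q$ into $D$.

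\textbf{Step 3.} Check that $\varphi$ is well defined, additive, and multiplicative, and that it is injective. Well-definedness and additivity follow because $\iota$ is additive. Multiplicativity follows from distributivity. Injectivity holds because $\iota(a)=\iota(b)$ forces $a=b$.

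This gives a copy of $\zz[q]$ inside $D$, so $\mathcal{D}(S_q) = \zz[q]$. The main (mild) obstacle is justifying this extension of $\iota$ to a ring embedding of $\zz[q]$; it amounts to the standard Grothendieck-group construction applied to the additive monoid $M_q$.
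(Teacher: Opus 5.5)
Your proposal is correct. In part (1) your objection to the statement is well founded. Since $S_q^* \subseteq \qq_{>0}$, the group $\gp(S_q^*)$ is $\qq_{>0}$, and it is not even isomorphic to $\qq^\times$, because $-1$ is a torsion element of $\qq^\times$. The paper's one-line proof makes the same slip: it asserts that the Grothendieck group of the multiplicative monoid $\nn$ is $\qq^\times$. Once $\qq^\times$ is replaced by $\qq_{>0}$, your argument ($\nn \subseteq S_q^* \subseteq \qq_{>0}$, and every positive rational is a quotient of positive integers) is exactly the paper's.

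For part (2) you take a different route.
\begin{itemize}
\item The paper uses the minimality of $\mathcal{D}(S_q)$ to place it in a chain $S_q \subseteq \mathcal{D}(S_q) \subseteq \zz[q]$. It then observes that any ring containing $S_q$ contains $-1$ and every $q^n$, and hence all of $\zz[q]$.
\item You verify the defining property of $\zz[q]$ directly. Using $\zz[q] = S_q - S_q$ (split the coefficients of an element of $\zz[q]$ by sign), you extend an arbitrary semiring embedding $\iota\colon S_q \to D$ to $\varphi(a-b) := \iota(a)-\iota(b)$. You then check that $\varphi$ is a well-defined injective ring homomorphism.
\end{itemize}

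The paper's argument is shorter. However, it presupposes that $\mathcal{D}(S_q)$ exists, and it tacitly assumes that the embedding of $\mathcal{D}(S_q)$ into $\zz[q]$ restricts to the identity on $S_q$. Your argument is self-contained and essentially constructs the Grothendieck domain from the Grothendieck group of $M_q$. It also shows that the copy of $\zz[q]$ inside $D$ extends $\iota$. That compatibility is what identifies $\zz[q]$ with $\mathcal{D}(S_q)$ up to an isomorphism fixing $S_q$.

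Two small remarks on your Step~2. The characteristic-zero discussion and the map $\zz[x] \to D$ are unnecessary, since injectivity of your explicit $\varphi$ already gives everything you need. Also, that map should send $x$ to $\iota(q)$, not to $q$.
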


\begin{proof}
	(1) This part follows immediately from the fact that $\nn_0[q]^*$ is an overmonoid of the multiplicative monoid $\nn$, whose Grothendieck group is $\qq^\times$.
	\smallskip
	
	(2) Since $S_q$ is a subsemiring of the integral domain $\zz[q]$, we can assume that the inclusions $S_q \subseteq \mathcal{D}(S_q) \subseteq \zz[q]$ hold. As $\mathcal{D}(S_q)$ is an abelian group under addition, it must contain~$-1$ and so $\zz \subseteq \mathcal{D}(S_q)$. This, along with the fact that $q^n \in \mathcal{D}(S_q)$ for every $n \in \nn$, ensures that $\zz[q] \subseteq \mathcal{D}(S_q)$, whence $\zz[q]$ must actually be the Grothendieck domain of $S_q$.
\end{proof}

\medskip
\subsection{Associated Numerical Monoids}

As in the previous section, we fix the generator $q \in \qq_{>0}$ of~$S_q$. Write $q = n/d$, with $n,d \in \nn$ such that $\gcd(n,d) = 1$. For each $k \in \nn_0$, we call the numerical monoid
\[
	N_{q,k} := \big\langle n^j d^{k-j} : j \in \ldb 0,k \rdb \big\rangle,
\]
the $k$-\emph{th numerical monoid associated} to the semidomain $S_q$. Observe that $N_{q,0} = \nn_0$. One can readily verify that $\{n^j d^{k-j} : j \in \ldb 0,k \rdb \}$ is a minimal generating set for $N_{q,k}$, and so
\[
	\mathcal{A}(N_{q,k}) = \big\{ n^jd^{k-j} : j \in \ldb 0,k \rdb \big\}.
\]
For each $k \in \nn$, we let $\mathsf{f}_{q,k}$ denote the Frobenius number of the numerical monoid $N_{q,k}$. The formula
\[
	\mathsf{f}_{q,k} = \frac{d^{k+2}-n^{k+2}}{d-n}-\frac{d^{k+1}-n^{k+1}}{d-n} - (d^{k+1}+n^{k+1}) = \frac{(n-1)d^{k+1} - (d-1)n^{k+1}}{d-n}
\]
was established by Ong and Ponomarenko in~\cite{OP08} (see also \cite{aT08}). Throughout this paper, often we will refer to the terms of the sequence $(\mathsf{f}_{q,k}/d^k)_{k \ge 1}$ and its limit (when it exists), so the following notation will be helpful: for each $k \in \nn$,
\[
	B_{q,k} := \frac{\mathsf{f}_{q,k}}{d^k}  = \frac{d(n-1) - n q^k(d-1)}{d-n}. 
\]
When $q \in (0,1)$, the limit of the sequence $(B_{q,k})_{k \ge 1}$ exists and will also be helpful, so in this case we set:
\begin{equation} \label{eq:Bq infinite}
	B_{q,\infty} := \lim_{k \to \infty} B_{q,k} = \lim_{k \to \infty} \frac{d(n-1) - n q^k(d-1)}{d-n} = \frac{d(n-1)}{d-n}.
\end{equation}
In what follows, it is often more convenient to use the sequence of finitely generated Puiseux monoids $\big( M_{q,k} \big)_{k \ge 0}$ whose $k$-th term is isomorphic to $N_{q,k}$:
\[
	M_{q,k} := \frac1{d^k} N_{q,k} = \big\langle q^j : j \in \ldb 0,k \rdb \big\rangle
\]
for every $k \in \nn_0$. Observe that if $q \notin \nn_0$ then $q^{k+1} \notin M_{q,k}$ for any $k \in \nn_0$, and so the sequence $\big( M_{q,k} \big)_{k \ge 0}$ is a strictly ascending chain of submonoids of the additive monoid of $S_q$ with $M_{q,0} = \nn_0$ and
\[
	\bigcup_{k \in \nn_0} M_{q,k} = S_q.
\]

The following lemma about membership in the rational monogenic semidomain $S_q$ was first established as part of CrowdMath 2024 in the study of the Goldbach statement for rational monogenic semidomains, but we include this CrowdMath lemma here for the sake of completeness.

\begin{lem} \cite{CM24} \label{lem:Frobenius bound}
	For $q \in \qq_{>0} \setminus (\nn \cup \nn^{-1}$, let $S_q$ be the rational monogenic semidomain generated by $q$. Then the following statements hold.
	\begin{enumerate}
		\item For each $k \in \nn$, if $c/\mathsf{d}(q)^k > B_{q,k}$ for some $c \in \nn$ then $c/\mathsf{d}(q)^k \in S_q$.
		\smallskip
		
		\item If $q \in (0,1)$ then, for each $k \in \nn$, if $c/\mathsf{d}(q)^k \ge B_{q,\infty}$ for some $c \in \nn$ then $c/\mathsf{d}(q)^k \in S_q$.
	\end{enumerate} 
\end{lem}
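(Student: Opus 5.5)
Write $q = n/d$ with $\gcd(n,d)=1$; since $q \notin \nn \cup \nn^{-1}$, we have $d \ge 2$ and $n \ge 2$. The plan is to reduce membership in $S_q$ to membership in the numerical monoid $N_{q,k}$ and then use the Frobenius formula of Ong and Ponomarenko. The key identity is $M_{q,k} = \frac{1}{d^k} N_{q,k}$, which gives
\[
	\frac{c}{d^k} \in M_{q,k} \iff c \in N_{q,k}.
\]
The monoid $N_{q,k}$ is numerical: its generators $d^k$ and $n^k$ are coprime, so its gcd is $1$. Hence every integer $c > \mathsf{f}_{q,k}$ lies in $N_{q,k}$.

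For part (1), suppose $c/d^k > B_{q,k} = \mathsf{f}_{q,k}/d^k$. Multiplying by $d^k$ gives $c > \mathsf{f}_{q,k}$, so $c \in N_{q,k}$. Therefore $c/d^k \in M_{q,k} \subseteq S_q$. This is immediate once one checks the two identities involved: the definition of $B_{q,k}$ and the rescaling $M_{q,k} = d^{-k}N_{q,k}$.

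For part (2), take $q \in (0,1)$, so $n < d$. Since $q^k > 0$, the formula
\[
	B_{q,k} = \frac{d(n-1) - nq^k(d-1)}{d-n}
\]
shows that $B_{q,k} < B_{q,\infty}$ strictly, because $n(d-1) > 0$ and $d - n > 0$. Hence $c/d^k \ge B_{q,\infty}$ implies $c/d^k > B_{q,k}$, and part (1) gives $c/d^k \in S_q$.

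The only step needing care is the sign of the denominator $d-n$ in the formula, which is exactly why part (2) is restricted to $q<1$. One should also confirm that the Ong–Ponomarenko formula is valid for all $k \ge 1$ in this range of $q$ (it requires $n,d \ge 2$, which the hypothesis $q\notin \nn\cup\nn^{-1}$ guarantees). Beyond these checks, no real obstacle is expected.
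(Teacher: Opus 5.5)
Your proposal is correct and follows the paper's proof essentially verbatim. Part (1) goes from $c > \mathsf{f}_{q,k}$ to $c \in N_{q,k}$ and then rescales by $d^{-k}$, and part (2) reduces to part (1) via the strict inequality $B_{q,k} < B_{q,\infty}$ for $q<1$. The paper also treats the unit-fraction case $n=1$ separately in part (2). That case is already excluded by the hypothesis, so leaving it out is harmless.
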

	
\begin{proof}
	(1) Write $q = n/d$ in lowest terms, and fix an index $k \in \nn$. Then take $c \in \nn$ such that $c/d^k > B_{q,k}$. As $c > d^kB_{q,k} = \mathsf{f}_{q,k}$, we see that $c \in N_{q,k}$ and, as a consequence, we can take coefficients $c_0, \dots, c_k \in \nn_0$ such that
	\[
		c =\sum_{j=0}^k c_j n^j d^{k-j}.
	\]
	Finally, we can divide the previous equality by $d^k$ to obtain the desired inclusion as follows:
	\[
		\frac{c}{d^k} = \sum_{j=0}^k c_j \left(\frac{n}{d}\right)^j = \sum_{j=0}^k c_jq^j \in S_q.
	\]
	\smallskip
	
	(2) Now assume that $q \in (0,1)$ and, as in the previous part, write $q = n/d$ in lowest terms. If $n=1$ then $q=1/d$ and so  $c/d^k = cq^k \in S_q$ for all $c,k \in \nn$. Therefore we can assume that $n\ge 2$, which implies that $\min\{n,d\} \ge 2$. Now fix $k \in \nn$ arbitrarily, and then take $c \in \nn$ such that $c/d^k \ge B_{q,\infty}$. As $d \ge n \ge 2$,
	\[
		B_{q,k} = \frac{d(n-1)}{d-n} - \frac{nq^k(d-1)}{d-n} < \frac{d(n-1)}{d-n} = B_{q,\infty},
	\]
	so $c/d^k \ge B_{q,\infty} > B_{q,k}$. Then it follows from part~(1) that $c/d^k \in S_q$, which completes the proof.
\end{proof}

\begin{cor}\label{cor:Frobenius bound for q<1}
	For $q \in \qq \cap (0,1)$, let $S_q$ be the rational monogenic semidomain generated by $q$. For each $k \in \nn$, if $c/\mathsf{d}(q)^k \ge B_{q,\infty}$ for some $c \in \nn$ then $c/\mathsf{d}(q)^k \in S_q$.
\end{cor}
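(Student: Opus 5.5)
The corollary should follow from part~(2) of Lemma~\ref{lem:Frobenius bound}. The only issue is the hypothesis mismatch: the lemma is stated for $q \in \qq_{>0} \setminus (\nn \cup \nn^{-1})$, while the corollary allows every $q \in \qq \cap (0,1)$, which includes the unit fractions. So I will split according to whether $q$ is a unit fraction.

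First, suppose $q \in (0,1)$ is not of the form $1/d$. Then $q \notin \nn$ because $q<1$, and $q \notin \nn^{-1}$ by assumption. Hence $q$ lies in the range covered by Lemma~\ref{lem:Frobenius bound}. Since $q \in (0,1)$, part~(2) of that lemma applies verbatim: for each $k \in \nn$, any $c \in \nn$ with $c/\mathsf{d}(q)^k \ge B_{q,\infty}$ satisfies $c/\mathsf{d}(q)^k \in S_q$.

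Second, suppose $q = 1/d$ with $d \in \nn_{\ge 2}$, so that $\mathsf{d}(q) = d$ and $\mathsf{n}(q) = 1$. Here I will argue directly, as in the opening lines of the proof of part~(2) of the lemma. For $c,k \in \nn$ we have
\[
c/d^k = c\,q^k = q^k + \dots + q^k \quad (c \text{ summands}),
\]
and $q^k \in S_q$ while $S_q$ is closed under addition, so $c/d^k \in S_q$. This holds for every $c$, so the hypothesis $c/d^k \ge B_{q,\infty}$ is not even needed. (Here $B_{q,\infty} = d(n-1)/(d-n) = 0$, so the hypothesis is vacuous anyway.)

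Combining the two cases proves the corollary. There is no real obstacle; the only point requiring care is noticing that the unit-fraction case is excluded from the lemma's hypotheses and must be treated separately, which the argument above does directly.
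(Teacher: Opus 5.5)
Your proof is correct and is essentially the paper's argument: the paper proves the corollary simply by citing part~(2) of Lemma~\ref{lem:Frobenius bound}. Treating unit fractions separately is worthwhile care, because the lemma's statement excludes $q \in \nn^{-1}$ while the corollary does not; the lemma's proof does in fact handle $\mathsf{n}(q)=1$ via $c/\mathsf{d}(q)^k = cq^k$, exactly as you do.
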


\begin{proof}
	This is precisely part~(2) of Lemma~\ref{lem:Frobenius bound}.
\end{proof}

\medskip
\subsection{Polynomial Representation}

Next, we discuss polynomial representations of elements of the rational monogenic semidomain $S_q$, where $q \in \qq_{>0}$. Given a polynomial $f(x) \in \nn_0[x]$, we let $\deg f$ and $\text{ord} \, f$ denote the degree and order of $f$, respectively, and we let $[x^n]f(x)$ denote the coefficient of $x^n$ in $f(x)$.

\begin{defn}
	For $q \in \qq_{>0}$, let $S_q$ be the rational monogenic semidomain generated by $q$. Given $s \in S_q$, we say that $f(x) \in \nn_0[x]$ is a \emph{polynomial representation} of $s$ in $S_q$ if $f(q) = s$, and we say that a polynomial representation $f(x)$ of $s$ is \emph{optimal} if $f(1) \le g(1)$ for every polynomial representation $g(x)$ of~$s$.
\end{defn}

It follows immediately from the definition of $S_q$ that every element of $S_q$ has a polynomial representation. Here is an illustrative example.

\begin{exam}
	\hfill
	\begin{itemize}
		\item First, fix $q := 2/5$ and consider the rational monogenic semidomain $S_q$. As $5q^2=2q$, the polynomials $5x^2+1$ and $2x+1$ represent the same element of $S_q$. Thus, the replacement $5x^2 \mapsto 2x$ transforms the polynomial representation $5x^2+1$ into the optimal polynomial representation $2x+1$ of the element $9/5$. This illustrates the reduction process in the case $q<1$: coefficients of positive powers are reduced modulo $\den(q)$ by moving mass to lower degree.
		\smallskip

		\item Now fix $q := 5/3$ and consider the rational monogenic semidomain $S_q$. As $5q=3q^2$, the polynomials $5x+1$ and $3x^2+1$ represent the same element of $S_q$. Thus, the replacement $5x \mapsto 3x^2$ transforms the polynomial representation $5x+1$ into the optimal polynomial representation $3x^2+1$ of the element $28/3$. This illustrates the reduction process in the case $q>1$: coefficients are reduced modulo $\num(q)$ by moving mass to higher degree. \hfill $\blacksquare$
	\end{itemize}
\end{exam}

We proceed to characterize optimal polynomial representations and to prove they are unique.

\begin{prop} \label{prop:optimal polynomial representations}
	For $q \in \qq_{>0} \setminus \{1\}$, let $S_q$ be the rational monogenic semidomain generated by~$q$. Then the following statements hold.
	\begin{itemize}
		\item[(1)] If $q<1$ then, for each $s \in S_q$, there is a unique polynomial representation $f(x)$ of $s$ such that $[x^i]f(x) < \mathsf{d}(q)$ for every $i \in \nn$. Moreover, $f(x)$ is the unique optimal polynomial representation of~$s$ in $S_q$.
		\smallskip
		
		\item[(2)] If $q>1$ then, for each $s \in S_q$, there is a unique polynomial representation $f(x)$ of $s$ such that $[x^i]f(x) < \mathsf{n}(q)$ for every $i \in \nn_0$. Moreover, $f(x)$ is the unique optimal polynomial representation of~$s$ in $S_q$.
	\end{itemize}
	
	In particular, every $s \in S_q$ has a unique optimal polynomial representation in $S_q$.
\end{prop}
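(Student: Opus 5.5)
Write $q = n/d$ in lowest terms. The whole argument rests on the exchange relations $d\,q^{i+1} = n\,q^i$ for every $i \in \nn_0$. In $\nn_0[x]$ these give the rewrites $d x^{i+1} \mapsto n x^i$ and $n x^i \mapsto d x^{i+1}$, and both preserve the value at~$q$. I would prove existence by a reduction procedure, uniqueness by a divisibility argument, and optimality by comparing $f(1)$ before and after a rewrite.

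\emph{Existence.} Assume first that $q<1$, so $n<d$. Start from any polynomial representation $g$ of $s$. If some coefficient $[x^i]g \ge d$ with $i \ge 1$, apply $d x^{i} \mapsto n x^{i-1}$. This lowers $g(1)$ by $d-n>0$. Since $g(1)\in\nn_0$, the process terminates, and it yields a representation $f$ with $[x^i]f<d$ for all $i \ge 1$. For $q>1$ we have $n>d$. Apply $n x^i \mapsto d x^{i+1}$ whenever $[x^i] g \ge n$ for some $i \ge 0$. Each step lowers $g(1)$ by $n-d>0$, so this process also terminates.

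\emph{Uniqueness.} Suppose $f \ne g$ both satisfy the coefficient bounds and $f(q)=g(q)$. Put $h = f-g \in \zz[x]$, so $h(q)=0$. Because $n,d$ are coprime, $nx-d$ is primitive, and Gauss's lemma gives $h(x)=(dx-n)u(x)$ with $u \in \zz[x]$ nonzero.
\begin{itemize}
\item Case $q<1$. Let $m=\deg u$. Then $[x^{m+1}]h = d\cdot \mathrm{lc}(u)$, whose absolute value is at least~$d$. On the other hand $m+1\ge 1$, and the coefficients of $f,g$ in positive degrees lie in $[0,d-1]$, so their difference has absolute value less than~$d$. This is a contradiction.
\item Case $q>1$. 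Let $m=\mathrm{ord}\,u$. Then $[x^m]h = -n\cdot[x^m]u$, whose absolute value is at least $n$. But all coefficients of $f,g$ lie in $[0,n-1]$, again a contradiction.
\end{itemize}
This coefficient-extremal argument is the step I expect to need the most care. In particular, in the case $q<1$ the constant term is unrestricted, which is why one must use the top coefficient there, while for $q>1$ one must use the bottom coefficient.

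\emph{Optimality.} Let $f$ be the reduced representation and $g$ any representation of $s$. The reduction procedure starting from $g$ never increases the value at $1$, and by uniqueness it terminates at $f$. Hence $f(1)\le g(1)$, so $f$ is optimal.

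Conversely, if $g$ is optimal but violates the coefficient bound, one more rewrite strictly decreases $g(1)$, contradicting optimality. So every optimal representation satisfies the bound and hence equals $f$. This proves both the uniqueness of optimal representations and the final ``in particular'' clause. The remaining case $q=1$ is excluded by hypothesis.
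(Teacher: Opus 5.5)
Your proposal is correct and follows essentially the same route as the paper: the same rewrites that decrease the value at $1$ give existence and optimality, and uniqueness comes from a divisibility argument on the extremal coefficient (the top one for $q<1$, the bottom one for $q>1$). The only difference is that you get that divisibility by factoring $f-g=(dx-n)u(x)$ via Gauss's lemma, where you wrote $nx-d$ but meant $dx-n$ (harmless, since both are primitive); the paper instead clears denominators at the extremal differing index and reduces modulo $d$ (resp.\ $n$), and your version has the small advantage of absorbing the paper's separate case $m=0$ automatically.
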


\begin{proof}
	Write $q = n/d$ for some $n,d \in \nn$ with $\gcd(n,d)=1$.
	\smallskip
	
	(1) Assume first that $q < 1$, and so $d > n$. To prove the existence of the desired polynomial representation, fix $s \in S_q$. Let $h(x) \in \nn_0[x]$ be any polynomial representation of $s$. As long as there exists $j \in \nn$ such that $[x^j]h(x) \ge d$, apply the following replacement:
	\[
		dx^j \mapsto nx^{j-1}.
	\]
	This replacement preserves the value of $h(x)$ at $q$, since $dq^j = nq^{j-1}$ while decreases the value of $h(x)$ at $1$ by $d-n>0$. Hence the process must terminate and, when it does, we obtain a polynomial representation $f(x)$ of $s$ satisfying $[x^i]f(x)<d$ for every $i \in \nn$.
	
	We now prove uniqueness. Suppose that $f(x),g(x) \in \nn_0[x]$ are two polynomial representations of the same element $s \in S_q$ such that $\max\{[x^i]f(x),[x^i]g(x)\} < d$ for every $i \in \nn$. Assume, toward a contradiction, that $f(x) \ne g(x)$, and set
	\[
		m:=\max\{i \in \nn_0 : [x^i]f(x) \ne [x^i]g(x)\}.
	\]
	We now split the argument into two cases and derive a contradiction in each of them.
	\smallskip
	
	\noindent \textsc{Case 1:} $m=0$. In this case, the equality $f(q)=g(q)$ forces $[x^0]f(x)=[x^0]g(x)$, which is the desired contradiction.
	\smallskip
	
	\noindent \textsc{Case 2:} $m \ge 1$. Since $f(q)=g(q)$, we see that
	\[
		0=\sum_{i=0}^m \big([x^i]g(x)-[x^i]f(x)\big)n^id^{m-i}.
	\]
	Reducing this equality modulo $d$, we obtain
	\[
		n^m\big([x^m]g(x)-[x^m]f(x)\big) \equiv 0 \pmod d.
	\]
	As $\gcd(n,d)=1$, it follows that $d$ divides $[x^m]g(x)-[x^m]f(x)$. Since this difference has absolute value less than $d$, it must be zero, contradicting the definition of $m$. Therefore $f(x)=g(x)$.
	\smallskip
	
	Finally, observe that the polynomial representation constructed above is optimal because every polynomial representation of $s$ can be transformed into $f(x)$ by replacements that decrease the value at $1$. Conversely, if $h(x)$ is an optimal polynomial representation of $s$ then no coefficient $[x^j]h(x)$ with $j \in \nn$ can be at least $d$, since replacing $dx^j$ by $nx^{j-1}$ would produce a polynomial representation of $s$ with smaller value at $1$. Thus, the inequality $[x^i]h(x) < d$ holds for every $i \in \nn$, and the uniqueness just proved gives $h(x)=f(x)$. Hence $f(x)$ is the unique optimal polynomial representation of~$s$.
	\smallskip
	
	(2) Now assume that $q > 1$, and so $n>d$. To prove existence, fix $s \in S_q$. Start from any polynomial representation $h(x) \in \nn_0[x]$ of $s$ and, as long as there exists $j \in \nn_0$ such that $[x^j]h(x) \ge n$, apply the following replacement:
	\[
		nx^j \mapsto dx^{j+1}.
	\]
	As in the previous part, observe that this replacement preserves the value of $h(x)$ at $q$ because $nq^j=dq^{j+1}$ and decreases the value of $h(x)$ at $1$ by $n-d>0$. Hence the process terminates, yielding a polynomial representation $f(x)$ of $s$ satisfying $[x^i]f(x)<n$ for every $i \in \nn_0$.
	
	To prove uniqueness, assume toward a contradiction that $f(x),g(x) \in \nn_0[x]$ are two distinct polynomial representations of the same element $s \in S_q$ such that $\max\{[x^i]f(x),[x^i]g(x)\} < n$ for every $i \in \nn_0$. Then set
	\[
		m:=\min\{i \in \nn_0 : [x^i]f(x) \ne [x^i]g(x)\} \quad \text{ and } \quad r:=\max\{\deg f,\deg g\}.
	\]
	Observe that dividing both sides of the equality $f(q)=g(q)$ by $q^m$ and then multiplying them by the factor $d^{r-m}$ yields
	\[
		0=\sum_{i=m}^r \big([x^i]g(x)-[x^i]f(x)\big)n^{i-m}d^{r-i},
	\]
	which we can reduce modulo $n$ to obtain $d^{r-m}\big([x^m]g(x)-[x^m]f(x)\big) \equiv 0 \pmod n$. As $\gcd(n,d)=1$, it follows that $n$ divides $[x^m]g(x)-[x^m]f(x)$. Since this difference has absolute value less than $n$, we obtain that $[x^m]f(x) = [x^m]g(x)$. However, this contradicts the definition of~$m$. Hence $f(x) = g(x)$.
	
	The proof of optimality is analogous to that in part~(1). The constructed representation $f(x)$ is optimal because every polynomial representation of $s$ can be transformed into $f(x)$ by replacements that decrease the value at $1$. Conversely, if an optimal polynomial representation $h(x)$ of $s \in S_q$ had a coefficient $[x^j]h(x) \ge n$ for some $j \in \nn_0$ then replacing $nx^j$ by $dx^{j+1}$ would produce another polynomial representation of the same element with smaller value at $1$, a contradiction. Thus, $[x^i]h(x)<n$ for every $i \in \nn_0$, and so $h(x)=f(x)$ by the uniqueness already proved. Hence $f(x)$ is the unique optimal polynomial representation of~$s$.
\end{proof}

Here is a consequence of Proposition~\ref{prop:optimal polynomial representations} that we will use later on.

\begin{corollary} \label{cor:unique polynomial representation if all coefficients are zero or one}
	For $q \in \qq_{>0} \setminus (\nn \cup \nn^{-1})$, let $S_q$ be the rational monogenic semidomain generated by~$q$. If $f(x) \in \nn_0[x]$ has all coefficients equal to $0$ or $1$ then $f(x)$ is the only polynomial representation of $f(q)$ in $S_q$.
\end{corollary}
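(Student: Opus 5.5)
The plan is to derive the statement directly from Proposition~\ref{prop:optimal polynomial representations}, using the reduction procedure from its proof rather than only its uniqueness assertion. Write $q = n/d$ in lowest terms. Since $q \notin \nn \cup \nn^{-1}$, we have $n \ge 2$ and $d \ge 2$, and $q \neq 1$. This is the only place where the hypothesis on $q$ enters, and it is used twice.

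First, $f(x)$ is the optimal polynomial representation of $s := f(q)$. Every coefficient of $f$ is at most $1$, which is below both $\mathsf{d}(q)$ and $\mathsf{n}(q)$. So $f$ satisfies the coefficient bound in part~(1) when $q<1$ and in part~(2) when $q>1$, and Proposition~\ref{prop:optimal polynomial representations} identifies it as the unique representation of $s$ with that bound, hence the unique optimal one.

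Now let $g(x) \in \nn_0[x]$ be any polynomial representation of $s$; we want $g = f$. If $g$ already satisfies the relevant coefficient bound, then the uniqueness in Proposition~\ref{prop:optimal polynomial representations} gives $g = f$. Otherwise, run the reduction process from the proof of that proposition starting at $g$. When $q<1$ it applies $dx^j \mapsto nx^{j-1}$, and when $q>1$ it applies $nx^j \mapsto dx^{j+1}$. The process performs at least one replacement and terminates at a representation satisfying the bound, which by uniqueness must be $f$.

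The key step is to look at the last replacement in this sequence. All coefficients stay nonnegative throughout, and this replacement adds $n$ (when $q<1$) or $d$ (when $q>1$) to some coefficient of the resulting polynomial $f$. That coefficient is therefore at least $\min\{n,d\} \ge 2$, contradicting that every coefficient of $f$ lies in $\{0,1\}$. Hence no replacement occurs, and $g = f$.

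The only delicate point is making sure the reduction process really ends at $f$ rather than at some other bounded representation; this is exactly what the uniqueness part of Proposition~\ref{prop:optimal polynomial representations} guarantees. I do not expect any serious obstacle beyond stating this clearly.
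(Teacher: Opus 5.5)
Your proposal is correct and follows essentially the same argument as the paper. Both reduce an arbitrary representation of $f(q)$ via $dx^j \mapsto nx^{j-1}$ (resp.\ $nx^j \mapsto dx^{j+1}$) to the unique bounded representation, which must be $f$, and then note that any final replacement would force some coefficient of $f$ to be at least $\min\{n,d\} \ge 2$.
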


\begin{proof}
	Write $q=n/d$ for some relatively prime $n,d \in \nn$. Since $q \notin \nn \cup \nn^{-1}$, we see that $\min\{n,d\} \ge 2$. Let $f(x) \in \nn_0[x]$ be a polynomial having all coefficients equal to $0$ or $1$. First, assume that $q<1$ and so that $n<d$. Then the reduction process in the proof of Proposition~\ref{prop:optimal polynomial representations} sends every polynomial representation $h(x) \in \nn_0[x]$ of $f(q)$ to the unique optimal one, which is $f(x)$ by Proposition~\ref{prop:optimal polynomial representations} (indeed, $[x^i]f(x) \le 1 < d$ for every $i \in \nn_0$). If a nontrivial sequence of reductions ended at $f(x)$ then its final step would be a replacement of the form $dx^j \mapsto nx^{j-1}$ and so $[x^{j-1}]f(x) \ge n \ge 2$, which is impossible because every coefficient of $f(x)$ is at most~$1$. The case $q>1$ is analogous, using the replacements $nx^j \mapsto dx^{j+1}$ and the inequality $d \ge 2$.
\end{proof}

\bigskip
\section{The Monoid of Technical Fractions}
\label{sec:monoid T_q}

It turns out that every monogenic semidomain has a divisor-closed submonoid that encodes relevant information about its arithmetic and factorization behavior. In this section, we will explore such submonoids.

Let us start by proving the following lemma.

\begin{lem} \label{lem:lower bound on divisor}
	For $q \in (0,1) \cap \qq$ such that $q$ is not a unit fraction, let $S_q$ be the rational monogenic semidomain generated by $q$. For each nonzero $r \in S_q$, there exists $m \in \nn$ such that every divisor of $r$ in $S_q$ belongs to the interval $(q^m, r/q^m)$.
\end{lem}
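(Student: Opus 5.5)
The plan is to control divisors through $p$-adic valuations at a prime $p$ dividing $\mathsf{n}(q)$. Such a prime exists because $q$ is not a unit fraction. We also use the fact that small elements of $S_q$ are forced to carry high powers of $q$.

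Write $q = n/d$ in lowest terms with $2 \le n < d$, and fix a prime $p$ dividing $n$. I would carry out the following steps in order.

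First, for every $s \in S_q^*$ we have $v_p(s) \ge 0$. Indeed, any polynomial representation $f(q) = \sum_{j=0}^{N} c_j q^j$ equals $\big(\sum_j c_j n^j d^{N-j}\big)/d^N$, and $p \nmid d$.

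Second, I would prove the key observation: if $s \in S_q^*$ satisfies $s \le q^m$, then $s \in q^m S_q$, and hence $v_p(s) \ge m\, v_p(n) \ge m$. To see this, take any polynomial representation $s = \sum_j c_j q^j$. If $c_j > 0$ for some $j \le m-1$, then $s \ge q^j \ge q^{m-1} > q^m$, because $q<1$. So all coefficients of degree less than $m$ vanish, and $s = q^m \sum_{j \ge m} c_j q^{j-m} \in q^m S_q$. The valuation bound then follows from the first step applied to the cofactor.

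Now fix $r \in S_q^*$ and set $m := v_p(r) + 1$, which lies in $\nn$ by the first step. Let $s$ be a divisor of $r$ in $S_q$, so that $r = st$ with $t \in S_q^*$. Then
\[
	v_p(s) = v_p(r) - v_p(t) \le v_p(r) < m,
\]
using $v_p(t) \ge 0$. By the key observation, $s \le q^m$ is impossible, so $s > q^m$. Since $t = r/s$ is also a divisor of $r$, the same argument gives $t > q^m$, that is, $s < r/q^m$. Hence $s \in (q^m, r/q^m)$, as the lemma asserts.

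The only delicate point is the key observation. There one must use an arbitrary polynomial representation, not just the optimal one, and handle the boundary case $s = q^m$. Both are covered by the strict inequality $q^{m-1} > q^m$, so I expect no real obstacle. The hypothesis that $q$ is not a unit fraction is used exactly to guarantee $n \ge 2$, so that the prime $p$ exists. Indeed, for $q = 1/d$ the semidomain has nontrivial units by Proposition~\ref{prop:units of N_0[q]}, and the statement fails.
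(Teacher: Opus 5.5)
Your proof is correct and takes essentially the same route as the paper: fix a prime $p \mid \mathsf{n}(q)$, use $v_p \ge 0$ on $S_q^*$, choose $m$ with $v_p(q^m) > v_p(r)$, rule out that a divisor $s$ has a polynomial representation of order at least $m$ (which yields $s > q^m$), and then apply the same bound to the cofactor $r/s$. Your ``key observation'' is just the contrapositive of the paper's claim about $\text{ord}\, f_s$. It handles the boundary case $\text{ord}\, f_s = m$ more cleanly than the paper does: the paper states its claim as $\text{ord}\, f_s \le m$ but then uses it in the strict form $\text{ord}\, f_s < m$.
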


\begin{proof}
	Set $n := \num(q)$ and $d := \den(q)$. Since $q$ is not a unit fraction, $n>1$. 
	
	Fix a prime $p$ such that $p \mid n$. Then $p \nmid d$ as $\gcd(n,d) = 1$. In particular, every nonzero element of~$S_q$ has nonnegative $p$-adic valuation: indeed, if $q' \in S_q^*$ then the denominator of $q'$ divides a power of $d$, and so $p \nmid \mathsf{d}(q')$. 
	
	Now fix a nonzero $r \in S_q$, and let $\mathcal{D}(r)$ be the set consisting of all nonzero divisors of $r$ in $S_q$. Then take $m \in \nn$ such that the following inequality holds:
	\begin{equation} \label{eq:the exponent m}
		v_p(q^m) > v_p(r).
	\end{equation}
	We are done once we prove that $\mathcal{D}(r) \subset (q^m, r/q^m)$. For this, fix $s \in \mathcal{D}(r)$, and let us argue that $s \in (q^m, r/q^m)$. Let $f_s(x) \in \nn_0[x]$ be a polynomial representation of $s$ in $S_q$. Observe that
	\begin{align*}
		v_p(s) = v_p(f_s(q)) &\ge \min \big\{ v_p([x^j]f_s) + jv_p(q) : j \in \ldb 0, \deg f_s \rdb \big\} \\
		            &\ge \min \big\{ v_p(q^j) : j \in \ldb 0, \deg f_s \rdb \big\} \\
					&= v_p(q^{\text{ord} \, f_s}).
	\end{align*}
	Before our final argument, we need to establish the following claim.
	\smallskip
	
	\noindent \textsc{Claim.} $\text{ord} \, f_s \le m$. 
	\smallskip
	
	\noindent \textsc{Proof of Claim.} Assume, towards a contradiction,
	that $\text{ord} \, f_s > m$. Then
	\begin{equation}\label{eq:val_s less than val_r}
		v_p(s) \ge v_p(q^{\text{ord} \, f_s}) > v_p(q^m) > v_p(r).
	\end{equation}
	On the other hand, since $v_p(q') \ge 0$ for all $q' \in S_q$, the $p$-valuation of $r/s$ is nonnegative, whence $v_p(r) - v_p(s) = v_p(r/s) \ge 0$. Hence $v_p(r) \ge v_p(s)$, which contradicts the inequality we have already obtained in~\eqref{eq:val_s less than val_r}. Hence $\text{ord} \, f_s \le m$, and the claim is now established.
	\smallskip
	
	We are in a position to show that $s \in (q^m, r/q^m)$. As $s \neq 0$, the polynomial $f_s(x)$ is not the zero polynomial. Therefore we can pick coefficients $c_0, \dots, c_{\deg f_s} \in \nn_0$ not all zeros such that
	\[
		f_s(x) = \sum_{j=0}^{\deg f_s} c_j x^j
	\]
	 According to the established claim, $\text{ord} \, f_s < m$. This, along with the fact that $q \in (0,1)$, ensures that
	\[
		s = f_s(q) = \sum_{j= \text{ord} \, f_s}^{\deg f_s} c_j q^j \ge q^{\text{ord} \, f_s} > q^m.
	\]
	Since the cofactor $r/s$ also belongs to $\mathcal{D}(r)$, we can similarly show that $r/s > q^m$, which implies that $s < r/q^m$. Therefore both inequalities $q^m < s$ and $s < r/q^m$ hold, which implies that $s \in (q^m, r/q^m)$. Hence we can conclude that $q^m < s < r/q^m$ for all $s \in \mathcal{D}(r)$.
\end{proof}

\begin{corollary}
	For $q \in \qq_{>0}$ such that $q$ is not a unit fraction, let $S_q$ be the rational monogenic semidomain generated by $q$. For each nonzero $r \in S_q$, there exist $\ell,u \in \rr_{>0}$ with $\ell<u$ such that every divisor of $r$ in $S_q$ belongs to the interval $(\ell,u)$.
\end{corollary}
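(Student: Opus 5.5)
The plan is to split according to the position of $q$ relative to $1$. Then either invoke Lemma~\ref{lem:lower bound on divisor} or use the fact that $S_q^*$ is bounded away from $0$.

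\textsc{Case $q \in (0,1)$.} Since $q$ is not a unit fraction, Lemma~\ref{lem:lower bound on divisor} applies directly. For a nonzero $r \in S_q$, it gives $m \in \nn$ such that every divisor of $r$ lies in $(q^m, r/q^m)$. We then set $\ell := q^m$ and $u := r/q^m$. Since $q^m < 1$ and $r > 0$, we get $q^{2m} < r$, and hence $\ell < u$. (Alternatively, nonemptiness of the interval follows because $1$ divides $r$ and so lies in it.)

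\textsc{Case $q \ge 1$.} This covers $q \in \nn$ as well as non-integers $q > 1$. The key observation is that every nonzero element of $S_q$ is at least $1$. Indeed, every element is $f(q)$ for some $f \in \nn_0[x]$, and $q^j \ge 1$ for every $j \in \nn_0$. So a nonzero element is a nonempty sum of terms, each at least $1$. This is the same observation already used in the proof of Proposition~\ref{prop:units of N_0[q]}. Now let $s$ be a divisor of a nonzero $r$, say $r = st$ with $s,t \in S_q^*$. Then $s \ge 1$ and $t \ge 1$, so $s = r/t \le r$. Thus every divisor lies in $[1, r]$. To get an open interval, we take $\ell := 1/2$ and $u := r+1$, and clearly $\ell < u$.

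The case $q = 1$ needs no separate treatment, since $S_1 = \nn_0$ is covered by the argument above. The only point needing care is that the hypothesis ``not a unit fraction'' excludes $q = 1/d$ with $d \ge 2$; this is exactly where the lemma's hypothesis is needed, because $S_q$ then contains units of arbitrarily small size. I do not expect any step to be a genuine obstacle: the substantive work is contained in Lemma~\ref{lem:lower bound on divisor}, and the case $q \ge 1$ is elementary.
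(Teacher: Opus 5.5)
Your proof is correct and follows the paper's argument: Lemma~\ref{lem:lower bound on divisor} handles $q<1$, and for $q \ge 1$ the fact that $S_q^* \subseteq \qq_{\ge 1}$ puts every divisor in $[1,r] \subset (1/2, r+1)$. One small slip: your first justification of $\ell<u$ is invalid, since ``$q^m<1$ and $r>0$'' does not by itself give $q^{2m}<r$; rely instead on your parenthetical argument that the divisor $1$ (or $r$ itself) lies in $(q^m, r/q^m)$, which does force $q^m < r/q^m$.
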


\begin{proof}
	Fix a nonzero $r \in S_q$, and let $\mathcal{D}(r)$ be the set consisting of all nonzero divisors of $r$ in $S_q$. If $q<1$ then Lemma~\ref{lem:lower bound on divisor} ensures the existence of $m \in \nn$ such that $q^m < r/q^m$ and $\mathcal{D}(r) \subset (q^m, r/q^m)$. Thus, we can assume that $q>1$. In this case, $1 = \min S_q^*$ and so $1/2 < s \le r = \max \mathcal{D}(r)$. Hence after we take $\ell := 1/2$ and $u := r+1$, the inclusion $\mathcal{D}(r) \subset (\ell, u)$ holds.
\end{proof}

\medskip
\subsection{The Monoid of Technical Fractions}

Before establishing our next result, it is convenient to introduce, for each $q \in \qq_{>0}$, a monoid homomorphism $\omega_q \colon S_q^* \to \nn_0$ and a submonoid $T_q$ of $S_q^*$, which we define as follows:
\begin{equation} \label{eq:omega_q}
	\omega_q(r) := \sum_{\substack{p \in \pp \, : \, p \nmid \den(q)}} v_p(r)
\end{equation}
for all nonzero $r \in S_q$, while
\begin{equation}
	T_q := \omega_q^{-1}(0) = \big\{ r \in S_q^* : v_p(r) = 0 \text{ for all } p \nmid \mathsf{d}(q) \big\}.
\end{equation}
Observe that, for each $q \in \qq_{>0}$, the subset $T_q$ of $S_q$ consists of all the nonzero fractions whose numerators and denominators, when written in lowest terms, are supported only on the prime factors of $\mathsf{d}(q)$. Let us take a look at an example.

\begin{exam}
	Fix $q := 5/6$, and consider the rational monogenic semidomain $S_q$. In this case, $\mathsf{d}(q)=6$, and so the only primes that do not appear in the sum defining $\omega_q$ are $2$ and $3$. Thus, for every nonzero $r \in S_q$, if we write $r$ in lowest terms, say $r = \frac{m}{2^j 3^k}$ for some $m \in \nn$ and $j,k \in \nn_0$ then
	\[
		\omega_q(r) = \sum_{p \in \pp \setminus \{2,3\}} v_p(r) = \sum_{p \in \pp \setminus \{2,3\}} v_p(m).
	\]
	In other words, $\omega_q(r)$ counts, with multiplicity, the prime factors of the numerator of $r$ different from $2$ and $3$. Consequently,
	\[
		T_q = \{ r \in S_q^* : \num(r) = 2^j 3^k \text{ for some } j,k \in \nn_0 \}.
	\]
	For instance, every natural number of the form $2^j 3^k$ belongs to $T_q$. In addition, $T_q$ contains nonintegral elements of $S_q$:
	\[
		1+2q=\frac{8}{3} \in T_q, \quad 2+3q=\frac{9}{2} \in T_q, \quad \text{and} \quad 3+2q+3q^2=\frac{27}{4} \in T_q.
	\]
	On the other hand, there are elements of $S_q^*$ that do not belong to $T_q$: for instance, $q=5/6 \in S_q^*$ does not belong to $T_q$ because $v_p(q) = 0$ for every $p \in \pp \setminus \{2,3,5\}$ and so
	\[
		\omega_q(q) = \sum_{p \in \pp \setminus \{2,3\}} v_p(q) = v_5(q) = 1 \neq 0.
	\]
	\hfill $\blacksquare$
\end{exam}

Let us argue the following proposition.

\begin{prop} \label{prop:submonoid of singular fractions}
	For $q \in \qq_{>0}$, let $S_q$ be the monogenic semidomain generated by $q$, and let $\omega_q \colon S_q^* \to \nn_0$ be as defined in~\eqref{eq:omega_q}. Then the following statements hold.
	\begin{enumerate}
		\item $\omega_q$ is a monoid homomorphism.
		\smallskip
		
		\item $T_q = S_q^* \cap \gp(p \in \pp : p \mid \mathsf{d}(q))$.
		\smallskip
		
		\item $T_q$ is a divisor-closed submonoid of $S^*_q$.
	\end{enumerate}
\end{prop}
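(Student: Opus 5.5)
The plan is to verify the three statements in order, using only valuations and the fact that every nonzero element of $S_q$ has nonnegative $p$-adic valuation at every prime $p \nmid \den(q)$.

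\textbf{Preliminary observation.} Write $d := \den(q)$ and take $r \in S_q^*$. Pick a polynomial representation $f(x) = \sum_{j=0}^{k} c_j x^j$ of $r$, so that $r = d^{-k}\sum_{j} c_j n^j d^{k-j}$ with $n = \num(q)$. The numerator here is a positive integer. Hence $\den(r)$ divides $d^k$, and so $v_p(r) \ge 0$ for every prime $p \nmid d$. This is the same reasoning used in the proof of Lemma~\ref{lem:lower bound on divisor}. In particular $\omega_q(r) = \sum_{p \nmid d} v_p(r)$ is a finite sum of nonnegative integers, so $\omega_q$ really takes values in $\nn_0$.

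\textbf{Part (1).} Each $v_p$ is a group homomorphism $\qq^\times \to \zz$. Therefore $\omega_q(rs) = \sum_{p \nmid d}(v_p(r)+v_p(s)) = \omega_q(r)+\omega_q(s)$ for all $r,s \in S_q^*$. Moreover $\omega_q(1) = 0$, since $v_p(1)=0$ for every $p$.

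\textbf{Part (2).} By the preliminary observation, $\omega_q(r)$ is a sum of nonnegative terms. So $\omega_q(r) = 0$ if and only if $v_p(r) = 0$ for every prime $p \nmid d$. A nonzero rational with this property is exactly one of the form $\prod_{p \mid d} p^{e_p}$ with $e_p \in \zz$, that is, an element of $\gp(p \in \pp : p \mid d)$. This gives $T_q = S_q^* \cap \gp(p \in \pp : p \mid \den(q))$. By part (1), $T_q = \omega_q^{-1}(0)$ is the kernel of a monoid homomorphism, hence a submonoid.

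\textbf{Part (3).} Let $t \in T_q$ and suppose $t = rs$ with $r,s \in S_q^*$. By part (1), $0 = \omega_q(t) = \omega_q(r)+\omega_q(s)$. Both summands lie in $\nn_0$, so $\omega_q(r) = \omega_q(s) = 0$, and hence $r,s \in T_q$. Thus $T_q$ is divisor-closed.

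The only step needing real care is the nonnegativity in the preliminary observation. Without it, $\omega_q$ could take negative values, and the divisor-closedness argument in part (3) would fail. That observation follows directly from the polynomial representation of elements of $S_q$.
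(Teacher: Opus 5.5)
Your proof is correct and follows the paper's argument essentially step by step: additivity of the valuations gives part (1), $T_q=\omega_q^{-1}(0)$ is identified with $S_q^*\cap\gp(p \in \pp : p \mid \den(q))$ in part (2), and in part (3) the fact that $\omega_q$ takes values in $\nn_0$ forces both cofactors into $T_q$. Your preliminary observation that $\den(r)$ divides a power of $\den(q)$ makes explicit the nonnegativity of $v_p(r)$ for $p \nmid \den(q)$, which the paper assumes tacitly by giving $\omega_q$ the codomain $\nn_0$. One small correction: in part (2) you should say ``positive rational'' rather than ``nonzero rational'', which is harmless since $S_q^* \subseteq \qq_{>0}$.
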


\begin{proof}
	Set $n := \mathsf{n}(q)$ and $d := \mathsf{d}(q)$. In addition, let $P$ be the subset of $\pp$ consisting of all the prime factors of~$d$.
	\smallskip
	
	(1) For each $p \in \pp \setminus P$, the valuation map $v_p \colon \qq^\times \to \zz$ is a well-defined group homomorphism from the group of units $\qq^\times$ of the field $\qq$ to the additive abelian group $\zz$, so the map $\omega \colon \qq^\times \to \zz$ defined as $\omega(r) := \sum_{p \in \pp \setminus P} v_p(r)$ is also a well-defined group homomorphism: indeed,  
	\[
		\omega(r_1 r_2) = \sum_{p \in \pp \setminus P} v_p(r_1 r_2) = \sum_{p \in \pp \setminus P} \big(v_p(r_1) + v_p(r_2)\big) = \omega(r_1) + \omega(r_2)
	\]
	for all $r_1, r_2 \in \qq^\times$. As $\omega_q \colon S_q^* \to \nn_0$ is the restriction map of $\omega$ to the multiplicative monoid $S_q^*$, it must be a monoid homomorphism.
	\smallskip
	
	(2) Let $\gp(P)$ denote the subgroup of $\qq^\times$ generated by $P$, which is the free (multiplicative) group on~$P$. For each $t \in T_q$ both $\num(t)$ and $\den(t)$ are finite products of primes in $P$, whence $T_q \subseteq S_q^* \cap \gp(P)$. For the reverse inclusion, it suffices to observe that if $t \in S_q^* \cap \gp(P)$ then the fact that $t \in \gp(P)$ guarantees that $v_p(t) = 0$ holds for any $p \in \pp \setminus P$ and so $\omega_q(t) = \sum_{p \in \pp \setminus P} v_p(t) = 0$, so the fact that $t \in S_q^*$ ensures that $t \in T_q$. Hence the inclusion $S_q^* \cap \gp(P) \subseteq T_q$ also holds.
	\smallskip
	
	(3) As $v_p(1) = 0$ for every $p \in \pp$, it follows that $\omega_q(1) = \sum_{p \in \pp \setminus P} v_p(1) = 0$ and so $1 \in T_q$. In addition, for all $t_1, t_2 \in T_q$, the fact that $\omega_q$ is a monoid homomorphism, along with the fact that $\omega_q(t_1) = \omega_q(t_2) = 0$, guarantees that $\omega_q(t_1 t_2) = \omega(t_1) + \omega(t_2) = 0$. Hence $T_q$ is a submonoid of $S_q^*$. To show that $T_q$ is divisor-closed, suppose that $s$ divides $t$ in $S_q^*$ for some $s \in S_q^*$ and $t \in T_q$. Then we can take $r \in S_q^*$ such that $t = rs$ and, after applying $\omega_q$, we obtain $0 = \omega_q(t) = \omega_q(rs) = \omega_q(r) + \omega_q(s)$, whence $\omega_q(r) = \omega_q(s) = 0$ as $\omega_q(r), \omega_q(s) \in \nn_0$. From $\omega_q(s) = 0$ we deduce that $s \in T_q$. Hence $T_q$ is a divisor-closed submonoid of $S_q^*$.
\end{proof}

Given that the monoid homomorphism $\omega_q$ and the submonoid $T_q$ of $S_q^*$ play a crucial role in the rest of this paper, it is convenient to introduce some terminology to refer to them.

\begin{defn}
	For $q \in \qq_{>0}$, let $S_q$ be the monogenic semidomain generated by~$q$. We call the rationals in the multiplicative submonoid $T_q$ of $S_q^*$ the \emph{technical fractions} of $S_q$, while we call the monoid $T_q$ the \emph{technical monoid} of $S_q$.
\end{defn}

Before proceeding further, let us determine the monoid of technical fractions of $S_q$ when the generator $q$ is a positive integer or a unit fraction.

\begin{example} \label{ex:T_q when q is a positive integer}
	For $q \in \nn$, let us show that $T_q$ is the trivial monoid. As $q \in \nn$, we see that $S_q = \nn_0$ and $\mathsf{d}(q)=1$. Therefore the set of prime factors of $\mathsf{d}(q)$ is empty, and Proposition~\ref{prop:submonoid of singular fractions}(2) yields
		\[
			T_q = S_q^* \cap \gp(p \in \pp : p \mid \mathsf{d}(q))=\nn \cap \{1\}=\{1\}.
		\]
		Equivalently, for every $r \in \nn$, the value $\omega_q(r)$ is the total number of prime factors of $r$, counted with multiplicity, so $\omega_q(r)=0$ if and only if $r=1$.
		\hfill $\blacksquare$
\end{example}

\begin{example} \label{ex:T_q when q is a unit fraction}
	We determine $T_q$ when $q$ is a unit fraction different from~$1$. Take $q := 1/d$ for some $d \in \nn_{\ge 2}$, and let $P$ be the set of prime factors of~$d$. Then Proposition~\ref{prop:submonoid of singular fractions}(2) yields
		\[
			T_q = S_q^* \cap \gp(P).
		\]
		For each $p \in P$, both $p$ and $1/p=(d/p)q$ belong to $S_q$, and so every element of $\gp(P)$ is a unit of $S_q$. Hence $\gp(P) \subseteq S_q^*$, which implies that
		\[
			T_q = \gp(P)=\big\{p_1^{e_1}\cdots p_\ell^{e_\ell} : e_1, \dots, e_\ell \in \zz \big\},
		\]
		where $p_1,\dots,p_\ell$ are the distinct prime divisors of~$d$. In particular, when $q$ is a unit fraction, every technical fraction is a unit of~$S_q$.
		\hfill $\blacksquare$
\end{example}

Before continuing with the monoid of technical fractions, let us argue the following lemma, which we need at our disposal for the proof of the next proposition.

\begin{lem} \label{lem:denseness of powers}
	For any $a,b \in \nn_{\ge 2}$ with $\gcd(a,b) = 1$, the set $\{a^k/b^\ell : k,\ell \in \nn_0\}$ is dense in $\rr_{>0}$.
\end{lem}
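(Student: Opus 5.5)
The plan is to pass to logarithms and use the density of subgroups of $\rr$ generated by two incommensurable reals. Taking $\log$, the claim is equivalent to the density in $\rr$ of the set
\[
	\Lambda := \{k \log a - \ell \log b : k,\ell \in \nn_0\},
\]
because $x \mapsto e^x$ is a homeomorphism from $\rr$ onto $\rr_{>0}$.

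The first step is to show that $\alpha := \log a / \log b$ is irrational. If $\alpha = u/v$ with $u,v \in \nn$, then $a^v = b^u$. Since $a \ge 2$, the integer $a^v$ has a prime factor $p$, and then $p$ divides $b^u$, hence divides $b$. This contradicts $\gcd(a,b) = 1$. Here $\log a, \log b > 0$ because $a,b \ge 2$.

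The second step is the main obstacle: producing density while using only \emph{nonnegative} exponents $k$ and $\ell$. I will use the fact that the fractional parts $\{k\alpha\}$ for $k \in \nn$ are dense in $[0,1)$. This follows from irrationality by the pigeonhole/Dirichlet argument, which in fact gives density of $\{k\alpha \bmod 1 : k \in \nn\}$ using positive $k$ only. To see this, pigeonhole yields $k_1 < k_2$ with $0 < |\{k_2\alpha\} - \{k_1\alpha\}| < \varepsilon$. The multiples of $k_2 - k_1 > 0$ then step through $[0,1)$ modulo $1$ with mesh less than $\varepsilon$, in one direction or the other, and in either case every subinterval of length $\varepsilon$ is hit.

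The third step converts this into density of $\Lambda$. Fix a target $x \in \rr$ and $\varepsilon > 0$. Writing $k\log a - \ell \log b = \log b\,(k\alpha - \ell)$, it suffices to find $k,\ell \in \nn_0$ with $|k\alpha - \ell - y| < \varepsilon/\log b$, where $y := x/\log b$. Choose $k \in \nn$ with $\{k\alpha\}$ within $\varepsilon/\log b$ of $\{y\}$; since infinitely many such $k$ exist, we may take $k$ large enough that $\lfloor k\alpha \rfloor \ge \lfloor y \rfloor$. Then set $\ell := \lfloor k\alpha \rfloor - \lfloor y \rfloor \in \nn_0$, so that $k\alpha - \ell - y = \{k\alpha\} - \{y\}$, which has the required size.

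The only point needing care is that infinitely many $k$ land in each interval. This holds because the set of $k \in \nn$ with $\{k\alpha\}$ in a given open interval is infinite: removing finitely many $k$ leaves $\{k\alpha : k > K\}$, which is still dense by the same pigeonhole argument applied starting from $K$. Exponentiating then yields $a^k/b^\ell$ within any prescribed neighborhood of $e^x$, which completes the proof.
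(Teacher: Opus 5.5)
Your proposal is correct and follows essentially the same route as the paper: irrationality of $\log a/\log b$, density of the fractional parts of the multiples $k \log a/\log b$, choosing $k$ large enough that $\ell := \lfloor k\log a/\log b \rfloor - \lfloor y \rfloor$ lies in $\nn_0$, and then exponentiating. The only difference is that you supply the pigeonhole justification of that density, including the fact that arbitrarily large $k$ land in each interval, which the paper uses without proof.
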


\begin{proof}
	First, observe that $\log a/\log b$ is irrational. Indeed, if $\log a/\log b=m/n$ for some $m,n \in \nn$ then $a^n=b^m$, which is not possible due to the fact that $\gcd(a,b)=1$ and both $a$ and $b$ are greater than~$1$. Now set $\alpha:=\log a$ and $\beta:=\log b$. Since $\alpha/\beta$ is irrational, the set
		\[
			\Big\{ k \frac{\alpha}\beta - \Big\lfloor k\frac{\alpha}\beta \Big\rfloor : k \in \nn_0 \Big\}
		\]
		is dense in the interval $[0,1]$. Now fix $\rho \in \rr$ and $\epsilon>0$, and set $y:=\rho/\beta$. Choose $k \in \nn_0$ sufficiently large so that
		\[
			\Big\lfloor k\frac{\alpha}{\beta}\Big\rfloor \ge \lfloor y \rfloor
			\quad \text{and} \quad
			\Big|\Big(k\frac{\alpha}{\beta}-\Big\lfloor k\frac{\alpha}{\beta}\Big\rfloor\Big)-(y-\lfloor y \rfloor)\Big|<\frac{\epsilon}{\beta}.
		\]
		Then $\ell:=\lfloor k\alpha/\beta\rfloor-\lfloor y \rfloor$ belongs to $\nn_0$, and
		\[
			\big|k\alpha-\ell\beta-\rho\big|<\epsilon.
		\]
		Hence the set $\{k \log a - \ell \log b : k,\ell \in \nn_0\}$ is dense in $\rr$. After exponentiating, we obtain that the set $\{a^k/b^\ell : k,\ell \in \nn_0\}$ is dense in $\rr_{>0}$.
\end{proof}

The following proposition will be helpful in the next sections because the infimum of $T_q \setminus \{1\}$ encodes some arithmetic aspects of $S_q^*$.

\begin{prop}\label{lem:inf T > 1}
	For $q \in \qq_{>0}$ such that $q$ is not a unit fraction, the following statements hold.
	\begin{enumerate}
		\item $\inf T_q \ge 1$.
		\smallskip
		
		\item $\inf (T_q \setminus \{1\}) = 1$ if $q \in (0,1)$ and $\den(q)$ is not a prime power.
	\end{enumerate}
\end{prop}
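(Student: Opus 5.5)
We prove the two parts separately: part~(1) by a $p$-adic valuation argument as in Proposition~\ref{prop:units of N_0[q]}, and part~(2) by writing a technical fraction close to $1$ as $1+\delta$ with $\delta \in S_q$ small.

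For part~(1), we split according to whether $q \ge 1$ or $q<1$. If $q \ge 1$ then $\min S_q^* \ge 1$, as observed in the proof of Proposition~\ref{prop:units of N_0[q]}, so $\inf T_q \ge 1$ trivially. Now assume $q \in (0,1)$, and set $n := \num(q)$ and $d := \den(q)$. Since $q$ is not a unit fraction, $n \ge 2$, so we can fix a prime $p \mid n$; note that $p \nmid d$. We reuse the argument of Case~2 in Proposition~\ref{prop:units of N_0[q]}. Any $s \in S_q$ with $s<1$ has a representation with zero constant term, so $s$ lies in $\langle q^j : j \in \nn \rangle$. Hence $v_p(s) \ge 1$. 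On the other hand, every $t \in T_q$ satisfies $v_p(t)=0$ because $p \nmid d$. Therefore no element of $T_q$ is smaller than~$1$, and $\inf T_q \ge 1$.

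For part~(2), assume $q \in (0,1)$ and that $d$ is not a prime power. By part~(1) it suffices to produce, for every $\epsilon>0$, some $t \in T_q$ with $1<t<1+\epsilon$. Since $d$ is not a prime power, we can write $d=ab$ with $a,b \in \nn_{\ge 2}$ and $\gcd(a,b)=1$. The plan is to find $t=a^i/b^j$ of the form $t = 1+\delta$ with $\delta \in S_q$, because then
\[
t \in S_q^* \cap \gp(p \in \pp : p \mid d) = T_q
\]
by Proposition~\ref{prop:submonoid of singular fractions}(2). The key difficulty is forcing $\delta \in S_q$ even though $\delta$ is tiny. Here Corollary~\ref{cor:Frobenius bound for q<1} only certifies membership for fractions above the fixed constant $B_{q,\infty}$. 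We overcome this by factoring $\delta = q^m y$ with $y \ge B_{q,\infty}$.

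Concretely, we first fix $m \in \nn$ so large that $B_{q,\infty} q^m < \epsilon$. We then set $L := \varphi(n^m)$ (Euler's totient), so that $a^L \equiv b^L \equiv 1 \pmod{n^m}$; this uses $\gcd(ab,n)=1$. Next we apply Lemma~\ref{lem:denseness of powers} to the coprime pair $a^L, b^L$ to choose $k,\ell \in \nn_0$ with
\[
1 + B_{q,\infty}q^m < \frac{a^{kL}}{b^{\ell L}} < 1+\epsilon,
\]
and we put $t := a^{kL}/b^{\ell L}$. Then $\delta := t-1 = (a^{kL}-b^{\ell L})/b^{\ell L}$, and its numerator is divisible by $n^m$, say $a^{kL}-b^{\ell L} = n^m w$ with $w \in \nn$. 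Hence
\[
\delta = q^m y \quad \text{with} \quad y := \frac{w d^m}{b^{\ell L}}.
\]
Because $b \mid d$, the rational $y$ can be written as $c/d^K$ for some $c \in \nn$ (take $K \ge \ell L$). Moreover $y = \delta/q^m > B_{q,\infty}$ by the choice of $t$.

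Corollary~\ref{cor:Frobenius bound for q<1} now gives $y \in S_q$. It follows that $\delta = q^m y \in S_q$ and $t = 1+\delta \in S_q$, so $t \in T_q$ with $1<t<1+\epsilon$. Together with part~(1) this yields $\inf(T_q \setminus \{1\}) = 1$. The step I expect to be most delicate is this one: choosing the exponents in a fixed residue class (multiples of $L$) so that $n^m$ divides $a^{kL}-b^{\ell L}$ while still keeping the density provided by Lemma~\ref{lem:denseness of powers}. That density is available precisely because $a^L$ and $b^L$ remain coprime and both are at least $2$.
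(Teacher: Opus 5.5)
Your proof is correct and takes essentially the same route as the paper. Part~(1) uses the same $p$-adic valuation argument. Part~(2) uses the same trick: restrict exponents to multiples of a period modulo a power of $n$, apply Lemma~\ref{lem:denseness of powers}, and certify $(t-1)/q^m \in S_q$ via Corollary~\ref{cor:Frobenius bound for q<1}. The only differences are cosmetic: the paper uses two distinct primes $p_1,p_2 \mid d$ with their multiplicative orders modulo $n^k$, whereas you split $d=ab$ coprimely and use $\varphi(n^m)$.
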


\begin{proof}
	(1) It suffices to show that no element of $T_q$ is smaller than~$1$. This is clear when $q \ge 1$, so suppose that $q<1$. Write $q=n/d$ in lowest terms. Since $q$ is not a unit fraction, $n>1$, and so we can fix $p \in \pp$ such that $p \mid n$. Then $p \nmid d$. Now take $r \in S_q^*$ with $r<1$, and let $f(x) \in \nn_0[x]$ be a polynomial representation of $r$. Since $q<1$, the inequality $r<1$ forces $\text{ord} \, f \ge 1$. For every $j \in \ldb \text{ord} \, f, \deg f \rdb$ with $[x^j]f(x) \ne 0$, we have
	\[
		v_p(([x^j]f(x))q^j) = v_p([x^j]f(x)) + jv_p(q) \ge (\text{ord} \, f)v_p(q) \ge v_p(q) \ge 1.
	\]
	Thus, the $p$-adic valuation of every nonzero term of $f(q)$ is at least~$1$, and so $v_p(r)=v_p(f(q)) \ge 1$. Since $p \nmid d$, the prime $p$ appears in the sum defining $\omega_q$, and therefore $\omega_q(r) \ge v_p(r) \ge 1$. Hence $r \notin T_q$. Thus, $T_q \subseteq \qq_{\ge 1}$, which implies that $\inf T_q \ge 1$.
	\smallskip
	
	(2) Now assume that $q \in (0,1)$ and that $\den(q)$ is not a prime power. Write $q=n/d$ in lowest terms. Since $q$ is not a unit fraction, $n \ge 2$. Set $B_q:=B_{q,\infty}$, the constant appearing in Corollary~\ref{cor:Frobenius bound for q<1}. Since $d$ is not a prime power, there are distinct primes $p_1$ and $p_2$ dividing~$d$. Fix $k \in \nn$. Since $p_1$ and $p_2$ are both coprime to $n^k$, their orders modulo $n^k$ are well defined; set
	\[
		\alpha_k := \text{ord}_{n^k}(p_1) \quad \text{and} \quad \beta_k := \text{ord}_{n^k}(p_2).
	\]
	Then $p_1^{\alpha_k} \equiv p_2^{\beta_k} \equiv 1 \pmod{n^k}$. Because $\gcd(p_1^{\alpha_k}, p_2^{\beta_k}) = 1$, Lemma~\ref{lem:denseness of powers} ensures that the set
	\[
		\big\{p_1^{i\alpha_k}/p_2^{j\beta_k} : i,j \in \nn_0 \big\}
	\]
	is dense in $\rr_{>0}$. Hence we can choose $i,j \in \nn_0$ such that
	\[
		r_k:=p_1^{i\alpha_k}/p_2^{j\beta_k} \in (1 + B_q q^k, 1 + 2B_q q^k).
	\]
	Setting $\epsilon_k:=r_k-1$, we see that
	\[
		B_q < \frac{\epsilon_k}{q^k} = \frac{r_k - 1}{q^k} < 2B_q.
	\]
	Also, the congruences above imply that $n^k \mid p_1^{i\alpha_k} - p_2^{j\beta_k}$. Therefore
	\[
		u_k := \frac{\epsilon_k}{q^k} = \frac{d^k}{n^k}(r_k - 1) = \frac{d^k}{n^k} \Big(\frac{p_1^{i\alpha_k}}{p_2^{j\beta_k}} - 1 \Big) = \frac{d^k \big(p_1^{i\alpha_k} - p_2^{j\beta_k} \big)}{n^k p_2^{j\beta_k}}
	\]
	has denominator divisible only by primes dividing $d$. Moreover, $u_k>B_q$. By Corollary~\ref{cor:Frobenius bound for q<1}, it follows that $u_k \in S_q$, and so $r_k = 1 + q^k u_k \in S_q$. Since both the numerator and denominator of $r_k$ are supported on primes dividing $d$, we have $\omega_q(r_k)=0$. Hence $r_k \in T_q \setminus \{1\}$. Finally, the inequalities $1+B_q q^k < r_k < 1+2B_q q^k$ imply that $r_k \to 1$ as $k \to \infty$. Together with part~(1), this shows that $\inf (T_q \setminus \{1\})=1$.
\end{proof}

We conclude this section with the following example.

\begin{exam}
		Fix $q=5/6$, and consider the rational monogenic semidomain $S_q$. As $\den(q)=6$ has two distinct prime divisors, the previous proposition applies. The technical fractions of $S_q$ are precisely the elements of $S_q^*$ whose numerators and denominators, in lowest terms, are supported only on the primes $2$ and $3$. The proposition says that, despite the fact that no technical fraction is less than~$1$, there are technical fractions different from~$1$ and arbitrarily close to $1$ from above. This contrasts with the prime-power denominator case, such as $q=2/3$, where the preceding construction with two distinct denominator primes is unavailable.
		\hfill $\blacksquare$
\end{exam}

\bigskip
\section{Factorization}
\label{sec:factorization}
\label{sec:BF}

In this section we start our investigation of factorization in the class consisting of all rational monogenic semidomains. We will consider all the properties shown in the diagram of Figure~\ref{fig:AAZ subdiagram for monogenic semidomains}.
\begin{center}
	\begin{figure}
		\begin{tikzcd}[cramped]
			\textbf{ UF } \ \arrow[r, Rightarrow, shift left=-0.3ex] \arrow[red, r, Leftarrow, "/"{anchor=center,sloped}, shift left=1.2ex] \arrow[d, Rightarrow, shift right=1.3ex] \arrow[red, d, Leftarrow, "/"{anchor=center,sloped}, shift left=0.8ex] &  \ \textbf{ HF } \arrow[d, Rightarrow, shift right=0.6ex] \arrow[red, d, Leftarrow, "/"{anchor=center,sloped}, shift left=1.3ex] \\
			\textbf{ FF } \ \arrow[r, Rightarrow, shift left=-0.3ex]	\arrow[red, r, Leftarrow, "/"{anchor=center,sloped}, shift left=1.3ex] & \textbf{ BF }  \arrow[r, Rightarrow, shift left=-0.3ex] \arrow[red, r, Leftarrow, "/"{anchor=center,sloped}, shift left=1.3ex]  & \textbf{ACCP}.
		\end{tikzcd}
		\caption{The diagram shows the strenght of the factorization and ideal-theoretical properties we study in this section.}
		\label{fig:AAZ subdiagram for monogenic semidomains}
	\end{figure}
\end{center}
\medskip

\medskip
\subsection{The Factorial and Half-factorial Properties}
\label{sec:factoriality}

We start by determine the positive rational generators $q$ such that the rational monogenic semidomain $S_q$ has the UF property. First, let us establish the following lemma.

\begin{lemma} \label{lem:if q is integer or unit fraction S_q is UFS}
	For any $q \in \nn \cup \nn_{\ge 2}^{-1}$, let $S_q$ be the rational monogenic semidomain semidomain generated by $q$.
\end{lemma}

\begin{proof}
	Fix $q \in \nn \cup \nn^{-1}$. If $q \in \nn$ then $S_q = \nn_0$, which is a UFS by the Fundamental Theorem of Arithmetic. Otherwise, $q$ is a unit fraction of the form $q = 1/d$ for some $d \in \nn$ with $d \ge 2$. Then it follows from Proposition~\ref{prop:units of N_0[q]} that $S_q^\times$ is a free abelian group $\gp(P)$ on $P$, where
	\[
		P := \big\{ p \in \pp : p \mid d \big\}.
	\]
	Therefore we can write $S_q^* = S_q^\times F$, where $F$ is the submonoid generated by the set of rational primes $\pp \setminus P$ inside the multiplicative monoid $\nn$. As $\pp \setminus P$ is a set of non-associate primes elements inside the monoid $F$, we obtain that $F$ is a free commutative monoid. This, along with the fact that $S_q^* = S_q^\times F$, ensures that $S_q^*$ is a UFM, whence $S_q$ is a UFS.
\end{proof}

It turns out that $S_q$ has the UF property if and only if $S_q$ has the HF property, which happens precisely when $q$ is either a positive integer or a unit fraction.

\begin{thm} \label{thm:UF/HF}
	For $q \in \qq_{>0}$, let $S_q$ be the rational monogenic semidomain generated by $q$. Then the following conditions are equivalent.
	\begin{itemize}
		\item[(a)] $S_q$ is a UFS.
		\smallskip
		
		\item[(b)] $S_q$ is an HFS.
		\smallskip
		
		\item[(c)] $q \in \nn \cup \nn^{-1}$.
	\end{itemize}
\end{thm}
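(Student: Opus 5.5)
The equivalences will follow from the cycle (c) $\Rightarrow$ (a) $\Rightarrow$ (b) $\Rightarrow$ (c). The implication (c) $\Rightarrow$ (a) is Lemma~\ref{lem:if q is integer or unit fraction S_q is UFS}. The case $q=1$ is included, since then $S_q=\nn_0$. The implication (a) $\Rightarrow$ (b) is formal: every UFM is an HFM. The real work is (b) $\Rightarrow$ (c), which I will prove by contraposition. So fix $q=n/d\in\qq_{>0}\setminus(\nn\cup\nn^{-1})$ in lowest terms, so that $\min\{n,d\}\ge 2$ and $S_q$ is reduced by Proposition~\ref{prop:units of N_0[q]}. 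I want to show that $S_q$ is not an HFS, either because it is not atomic (then it cannot be an HFS by definition) or because some element has two factorizations of different lengths.

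The cleanest tool for producing elements with two genuinely different factorizations is Corollary~\ref{cor:unique polynomial representation if all coefficients are zero or one}. An element $f(q)$ with $f$ having $0/1$ coefficients has only the one representation $f(x)$. Consequently, any divisor $s$ of such an element, say $f(q)=s\,t$, must satisfy: for any representations $g,h$ of $s,t$, the product $gh$ equals $f$, since $gh$ is a representation of $f(q)$. In particular $s$ itself has a unique representation, $f$ factors in $\nn_0[x]$ as $gh$, and so $f(q)$ is an atom of $S_q$ whenever $f$ is irreducible in $\nn_0[x]$. More precisely, the factorizations of $f(q)$ in $S_q^*$ correspond exactly to the factorizations of $f$ in $\nn_0[x]$, using only factors with $0/1$ coefficients (products of nonnegative polynomials with a $0/1$ product have $0/1$ coefficients). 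Hence the divisor-closed submonoid generated by such $f(q)$ is modelled on the corresponding divisor-closed part of $\nn_0[x]^*$.

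So the plan is to exhibit a $0/1$ polynomial with two factorizations of different lengths in $\nn_0[x]$, where every factor is a $0/1$ polynomial. The classical example is
\[
(1+x^3)(1+x+x^2) \;=\; 1+x+x^2+x^3+x^4+x^5 \;=\; (1+x)(1+x^2+x^4).
\]
This has the same length on both sides, so I need a better one. I would try
\[
1+x+\cdots+x^{5} = (1+x)(1+x+x^2)(1-x+x^2),
\]
but the last factor is not in $\nn_0[x]$. The $\nn_0[x]$-factorizations of $1+\dots+x^5$ are $(1+x)(1+x^2+x^4)$ and $(1+x^3)(1+x+x^2)$, each of length $2$ once I check that these factors are irreducible in $\nn_0[x]$. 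To break this symmetry I would take $F=1+x+\cdots+x^{N-1}$ with $N=p_1p_2p_3$ or $N=p^2q$ and look for decompositions $\Phi$-products grouped into nonnegative $0/1$ factors of different counts. For example, with $N=12$:
\[
\frac{x^{12}-1}{x-1}=(1+x)(1+x^2)(1+x^4+x^8)
\]
gives length $3$, while
\[
\frac{x^{12}-1}{x-1}=(1+x+\cdots+x^5)(1+x^6)
\]
with $1+x+\dots+x^5$ further split gives $(1+x)(1+x^2+x^4)(1+x^6)$. That is again length $3$ unless $1+x^6$ or other pieces split, so I must search carefully. A backup that avoids this search is $F=(1+x)(1+x^2)(1+x^4)=1+\dots+x^7$, which is also $(1+x^4)(1+x+x^2+x^3)$ with $1+x+x^2+x^3=(1+x)(1+x^2)$, again the same length. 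The step I expect to be the main obstacle is exactly finding, and certifying irreducibility in $\nn_0[x]$ of the pieces of, a $0/1$ polynomial with factorization lengths of different sizes. I would first try small products of cyclotomic-type $0/1$ blocks by computer-style enumeration, and verify irreducibility in $\nn_0[x]$ by noting that any factor must itself be $0/1$, hence finitely many checks.

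If this combinatorial route stalls, the fallback is to use the additive relation $d\,q=n$ together with the technical monoid. In $S_q$ the identity $d\cdot q = n\cdot 1$ expresses $nq^{0}\cdot(\text{stuff})$ in two ways. Factoring $n$ and $d$ into primes in $\nn$ and checking which integer primes remain atoms of $S_q$ (via the valuation arguments of Proposition~\ref{lem:inf T > 1}(1)), one hopes to find $d\cdot q\cdot c = n\cdot c'$ with unequal total numbers of atoms. Either way, the conclusion is that $S_q$ fails the HF property, completing (b) $\Rightarrow$ (c).
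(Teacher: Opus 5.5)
Your reductions (c)$\Rightarrow$(a)$\Rightarrow$(b) are fine, and the tool you set up for (b)$\Rightarrow$(c) is the right one. By Corollary~\ref{cor:unique polynomial representation if all coefficients are zero or one}, a $0/1$ polynomial $A(x)$ is the only representation of $A(q)$. So any factorization $A(q)=uv$ lifts to $A=UV$ in $\nn_0[x]$, and $A(q)$ is an atom of the reduced monoid $S_q^*$ whenever $A$ is irreducible in $\nn_0[x]$. However, your proof stops exactly where the content of (b)$\Rightarrow$(c) lies: you never exhibit an element of $S_q$ with two factorizations of different lengths. Every identity you test has equal lengths on both sides, as you notice yourself, and the search is left undone. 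The fallback via $d\cdot q=n$ is only a hope. A single multiplicative relation says nothing about lengths unless you know the atomic factorizations of $n$, $d$ and $q$ in $S_q$, which you never determine. So, as written, (b)$\Rightarrow$(c) is unproved.

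The missing idea is that the product does not need to be a $0/1$ polynomial; only the individual factors do. You apply the atom criterion factor by factor, so the product may have larger coefficients and several representations. Your self-imposed requirement that $F$ itself be $0/1$ forces you into rigid direct-sum decompositions of finite sets, which is why your candidates keep regrouping into the same number of pieces. Any identity in $\nn_0[x]$ between products of $0/1$ polynomials irreducible in $\nn_0[x]$, with different numbers of factors, suffices. The paper uses
\[
(1+x)(1+x^2)(1+x^4+x^5) \;=\; 1+x+x^2+x^3+x^4+2x^5+2x^6+2x^7+x^8 \;=\; (1+x+x^2)(1+x^3+x^5+x^6).
\]
Four of the five factors have constant term $1$ and prime value at $1$, hence are irreducible in $\nn_0[x]$. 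For $1+x^3+x^5+x^6$, whose value at $1$ is $4$, a nontrivial factorization in $\nn_0[x]$ would have to be $(1+x^a)(1+x^b)$. Its support $\{0,a,b,a+b\}$ cannot equal $\{0,3,5,6\}$. Evaluating at $q$ then gives factorizations of lengths $3$ and $2$ of the same element of the reduced monoid $S_q^*$, which is the step your argument lacks.
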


\begin{proof}
	(a) $\Rightarrow$ (b): This follows immediately, since every UFS is an HFS.
	\smallskip
	
	(b) $\Rightarrow$ (c): Suppose now that $S_q$ is an HFS. Assume, for the sake of a contradiction, that $q \notin \nn \cup \nn^{-1}$. Write $q=n/d$ for some relatively prime $n,d \in \nn$. As  $q \notin \nn \cup \nn^{-1}$, it follows that $\min\{n,d\} \ge 2$, and so $S_q^*$ is reduced by Proposition~\ref{prop:units of N_0[q]}.
	
	Consider the polynomial $R(x) := x^8 + 2x^7 + 2x^6 + 2x^5 + x^4 + x^3 + x^2 + x + 1$ in $\nn_0[x]$ and set $r := R(q) \in S_q$. In order to produce the desired contradiction, it suffices to argue that the element $r $ has factorizations of length $2$ and $3$ in $S_q$. Before exhibiting such factorizations, it is convenient to argue the following claim.
	\smallskip

	\noindent \textsc{Claim.} If an irreducible polynomial $A(x) \in \nn_0[x]$ is the unique polynomial representation of $A(q)$ in $S_q$ then $A(q)$ is an atom in $S_q$.
	\smallskip

	\noindent \textsc{Proof of Claim.} Let $A(x) \in \nn_0[x]$ be an irreducible polynomial in $\nn_0[x]$ such that $A(x)$ is the only polynomial representation of $A(q)$ in $S_q$. To argue that $A(q)$ is an atom, write $A(q) = uv$ for some $u,v \in S_q^*$. Let the polynomials $U(x)$ and $V(x)$ in $\nn_0[x]$ be polynomial representations of $u$ and $v$, respectively. After evaluating the polynomial $U(x)V(x)$ at $q$ one obtains  $A(q)$, whence $U(x)V(x)$ is also a polynomial representation of $A(q)$. Thus, the uniqueness of $A(x)$ as polynomial representation of $A(q)$ now ensures that $A(x) = U(x)V(x)$. As $A(x)$ is irreducible in $\nn_0[x]$, it follows that either $U(x)$ or $V(x)$ is the containt polynomial $1$. Therefore $u = U(q) = 1$ or $v = V(q) = 1$, and we conclude that $A(q)$ is an atom of $S_q$. Thus, the claim is established.
	\smallskip

	We are in a position to exhibit factorizations of $r$ in $S_q$ of length $2$ and $3$, which will give us the desired contradiction because the semidomain $S_q$ is an HFS.
	
	Let us first produce the length-$2$ factorization. For this, observe that
	\[
		A_1(x) := x^2 + x + 1 \quad \text{ and } \quad A_2(x) := x^6 + x^5 + x^3 + 1
	\]
	are polynomials in $\nn_0[x]$ that satisfy $R(x) = A_1(x)A_2(x)$, whence it suffices to argue that $A_1(q)$ and $A_2(q)$ are irreducible in $S_q$. Before proving so, it is convenient to argue show that $A_1(x)$ and $A_2(x)$ are irreducible polynomials in $\nn_0[x]$. 
	
	The polynomial $A_1(x)$ is irreducible in $\nn_0[x]$ because its value at $1$ is a rational prime and it has constant term~$1$. To argue that the polynomial $A_2(x)$ is also irreducible in $\nn_0[x]$, take polynomials $G(x)$ and $H(x)$ in $\nn_0[x]$ such that $A_2(x) = G(x)H(x)$ and further assume that $1 \notin \{G(x),H(x)\}$. Since the only positive integer dividing $A_2(x)$ is $1$, neither $G(x)$ nor $H(x)$ is a constant polynomial. Moreover, as $G(0)H(0) = A_2(0) = 1$, it follows that $G(0) = H(0) = 1$. Now the fact that $A_2(1)=4$ implies that $G(1)=H(1)=2$. Thus, both $G(x)$ and $H(x)$ are binomials with constant term~$1$. However, no product of two such binomials has support $\{0,3,5,6\}$. As a consequence, the polynomial $A_2(x)$ is irreducible in $\nn_0[x]$.

	As $q \notin \nn \cup \nn^{-1}$ and all the coefficients of $A_1(x)$ and $A_2(x)$ are either $0$ or $1$, it follows from Corollary~\ref{cor:unique polynomial representation if all coefficients are zero or one} that $A_1(x)$ and $A_2(x)$ are the unique polynomial representations of $A_1(q)$ and $A_2(q)$ in $S_q$, respectively. This, along with the fact that $A_1(x)$ and $A_2(x)$ are irreducibles in $\nn_0[x]$, allows us to conclude that $A_1(q)$ and $A_2(q)$ are atoms in $S_q$ (we are using here the established claim). As a result, the right-hand side of $r = A_1(q) A_2(q)$ is a length-$2$ factorization of~$r$ in~$S_q$.
	
	To produce the length-$3$ factorization of $r$ in $S_q$, we will proceed in a similar manner as we did for the length-$2$ factorization. First, observe that
	\[
		B_1(x) := 1+x, \quad B_2(x) := 1+x^2, \quad \text{and} \quad B_3(x) := 1+x^4+x^5
	\]
	are polynomials in $\nn_0[x]$ such that $R(x) = B_1(x)B_2(x)B_3(x)$. Then notice that the three polynomials are irreducible in $\nn_0[x]$ because their values at $1$ are prime and each has constant term~$1$. Because all the coefficients of $B_1(x)$, $B_2(x)$, and $B_3(x)$ are either $0$ or $1$, Corollary~\ref{cor:unique polynomial representation if all coefficients are zero or one} guarantees that $B_1(x)$, $B_2(x)$, and $B_3(x)$ are the unique polynomial representations of $B_1(q)$, $B_2(q)$, and $B_3(q)$ in $S_q$, respectively. Therefore $B_1(q)$, $B_2(q)$, and $B_3(q)$ are atoms of $S_q$ in light of the established claim. Hence it follows from the fact that $r = B_1(q)B_2(q)B_3(q)$ that $B_1(q)B_2(q)B_3(q)$ is a length-$3$ factorization of $r$ in $S_q$.

	As the element $r$ of $S_q$ has two factorization with different lengths, the semidomain $S_q$ is not an HFS, a contradiction. Hence $q \in \nn \cup \nn^{-1}$.
	\smallskip
	
	(c) $\Rightarrow$ (a): It follows from Lemma~\ref{lem:if q is integer or unit fraction S_q is UFS} that if the generator $q$ is either a positive integer or a unit fraction then $S_q$ is a UFS.
\end{proof}

The Bi-UF Positive Conjecture, mentioned in the introduction, states that the only positive semiring whose additive and multiplicative monoids both satisfy the UF property is the prototypical semidomain~$\nn_0$. Moreover, in the same paper where the Bi-UF Positive Conjecture was first proposed, the authors posed the following question.

\begin{quest} \cite[Question~7.8.1]{BCG21} \label{quest:bi-HFS}
	Is $\nn_0$ the only positive semiring whose additive and multiplicative monoids both satisfy the HF property?
\end{quest}

As an immediate consequence of Theorem~\ref{thm:UF/HF}, we can provide some evidence supporting not only the statement of the Bi-UF Positive Conjecture but also providing information correlated with a potential positive answer to Question~\ref{quest:bi-HFS}.

\begin{corollary}
	The only monogenic rational semidomain having the Bi-HF property is $\mathbb{N}_0$. Thus, the statement of the Bi-UF Positive Conjecture holds over the class of rational monogenic semidomains. 
\end{corollary}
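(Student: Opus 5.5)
The plan is to combine Theorem~\ref{thm:UF/HF} with the structure theorem for the additive monoid $M_q$ recalled in Section~\ref{sec:background}. Here a rational monogenic semidomain $S_q$ (with $q \in \qq_{>0}$) has the Bi-HF property when both its additive monoid $M_q$ and its multiplicative monoid $S_q^*$ are HFMs. The Bi-UF statement then follows at once, since every UFM is an HFM.

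First, I would handle the multiplicative side. If $S_q^*$ is an HFM, then Theorem~\ref{thm:UF/HF} (b)$\Rightarrow$(c) gives $q \in \nn \cup \nn^{-1}$. This splits into two cases.

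In the first case, $q \in \nn$. Then $S_q = \nn_0$, whose additive monoid is free on $\{1\}$ and whose multiplicative monoid is a UFM. So $\nn_0$ does have the Bi-HF (indeed Bi-UF) property.

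In the second case, $q = 1/d$ with $d \ge 2$. Then $q^{-1} \in \nn_{\ge 2}$, and the cited theorem (\cite[Theorem~6.2]{GG18}, \cite[Theorem~4.11]{CG22}) says $M_q$ is not atomic. I will take the HF property to include atomicity, as in the definitions of Section~\ref{sec:background}, where factorization properties are defined for atomic monoids. Under that convention $M_q$ is not an HFM. Alternatively, I can argue directly: $M_q = \zz_{\ge 0}[1/d]$ is the nonnegative part of $\zz[1/d]$, and each $x > 0$ in it equals $d \cdot (x/d)$ with $x/d \in M_q$ nonzero. Hence $M_q$ has no atoms, and its nonzero elements have no factorizations at all. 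Either way, $S_{1/d}$ fails the additive HF property.

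Combining the two cases, $\nn_0 = S_1$ is the only rational monogenic semidomain with the Bi-HF property. Since Bi-UF implies Bi-HF, the same conclusion holds for Bi-UF, which confirms the Bi-UF Positive Conjecture within this class. I expect no real obstacle. The only point needing care is the convention that the HF property presupposes atomicity, so that the antimatter monoid $M_{1/d}$ is correctly excluded rather than vacuously passing the test. I would make that convention explicit, or simply use the direct "no atoms" computation above.
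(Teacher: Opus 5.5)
Your proposal is correct and follows essentially the same route as the paper. Theorem~\ref{thm:UF/HF} forces $q \in \nn \cup \nn^{-1}$, and the unit-fraction case $q = 1/d$ with $d \ge 2$ is excluded because the additive monoid of $S_{1/d}$ is not atomic, since every nonzero $r$ is the sum of $d$ copies of $r/d$; your explicit check that $\nn_0$ itself is Bi-HF and your remark that the HF property presupposes atomicity are small clarifications the paper leaves implicit.
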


\begin{proof}
	Consider the rational monogenic semidomain $S_q$ for some $q \in \qq_{>0}$, and assume that both the additive and the multiplicative monoids of $S_q$ are HFMs. In particular, the semidomain $S_q$ is an HFS and, therefore, it follows from Theorem~\ref{thm:UF/HF} that $q \in \nn \cup \nn^{-1}$. Observe that, for every unit fraction $1/d$ with $d \ge 2$, the additive monoid of $S_{1/d}$ consists of all $d$-adic rationals and so it is not even atomic: indeed, every nonzero $d$-adic rational $r$ can be written as the sum of $d$ copies of $r/d$, whence $r$ is not an additive atom of $S_{1/d}$. As a result, $q \notin \nn_{\ge 2}^{-1}$, which implies that $q \in \nn$. Hence $S_q = \nn_0[q] = \nn_0$. 
\end{proof}

\medskip
\subsection{The Finite Factorization Property}
\label{sec:FF}

This section is devoted to the FF property: we determine all rational parameters $q$ for which the semidomain $S_q$ is an FFS. The following well-known characterization for FFMs, established in~\cite{fHK92}, will be use in the main theorem of this section.

\begin{lem} \cite[Corollary~2]{fHK92} \label{lem:finite divisors imply FFM}
	A monoid $M$ is an FFM if and only if every element of $M$ has only finitely many divisors up to associates.
\end{lem}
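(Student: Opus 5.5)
The plan is to prove both implications directly from the definitions, working in $M_{\text{red}}$ so that "up to associates" becomes literal equality. Recall that a factorization is an element of the free monoid on $\mathcal{A}(M_{\text{red}})$, so it can be regarded as a finite multiset of atoms.

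For the forward implication, assume $M$ is an FFM, which in particular means $M$ is atomic. Fix $b \in M$ and let $d$ be any divisor of $b$, say $b = dc$ with $c \in M$.
\begin{itemize}
	\item If $d$ is a unit, its class in $M_{\text{red}}$ is trivial.
	\item Otherwise, choose factorizations $z_d$ of $d$ and $z_c$ of $c$ (empty if $c$ is a unit). Their product $z_d z_c$ lies in $\mathsf{Z}(b)$.
\end{itemize}
So the class $dM^\times$ equals $\pi(z')$, where $z'$ is a sub-multiset of some factorization in $\mathsf{Z}(b)$. Since $\mathsf{Z}(b)$ is finite and each factorization has only finitely many sub-multisets, $b$ has only finitely many divisors up to associates.

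For the reverse implication, assume every element has only finitely many divisors up to associates. The first step is to show that $M$ satisfies the ACCP, which will give atomicity, as recalled in the background section. Suppose $b_1M \subsetneq b_2M \subsetneq \cdots$ is a strictly ascending chain of principal ideals. Then every $b_n$ divides $b_1$. Moreover, the $b_n$ are pairwise non-associate, because $b_nM = b_mM$ holds exactly when $b_n$ and $b_m$ are associates. This contradicts the finiteness of the divisor classes of $b_1$, so $M$ satisfies the ACCP and is therefore atomic.

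Next, fix $b$ and let $N$ be the number of divisor classes of $b$. We bound factorization lengths. Take a factorization $a_1 \cdots a_\ell$ of $b$ and consider the partial products $1, a_1, a_1a_2, \dots, a_1\cdots a_\ell$. Each is a divisor of $b$. They are pairwise non-associate: if $a_1\cdots a_i$ and $a_1 \cdots a_j$ were associates with $i<j$, cancellativity would make $a_{i+1}\cdots a_j$ a unit, which is impossible since it is a product of atoms. Hence $\ell + 1 \le N$. Every atom that appears divides $b$, so only finitely many atom classes of $M_{\text{red}}$ can occur, and factorizations have length at most $N-1$. There are therefore only finitely many multisets of bounded size over a finite set, so $\mathsf{Z}(b)$ is finite.

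The only step needing care is the reverse direction's length bound, specifically the non-associativity of the partial products. That comes down to cancellativity together with the fact that atoms are nonunits, both of which are standing assumptions. The rest is bookkeeping.
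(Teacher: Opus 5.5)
Your proof is correct. The paper gives no argument of its own for this lemma; it simply imports it from Halter-Koch, so there is nothing in the text to match, and what you have written is the standard self-contained proof.

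In the forward direction you observe that every divisor class of $b$ is $\pi(z')$ for some sub-multiset $z'$ of some $z \in \mathsf{Z}(b)$. In the reverse direction, finiteness of divisor classes first forces the ACCP. That gives atomicity, which the paper's definition of an FFM tacitly presupposes, and you rightly establish it rather than assume it. Finiteness of divisor classes then bounds both the atoms of $M_{\text{red}}$ that can occur in a factorization of $b$ and, through the pairwise distinct partial products $1, a_1, a_1a_2, \dots$ in $M_{\text{red}}$, the lengths. In the ACCP step, a chain that fails to stabilize should first be thinned to a strictly ascending one by discarding repeated terms, but that is immediate.

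Compared with the citation, your route costs a paragraph and buys a proof that uses nothing beyond cancellativity and commutativity. That suffices for both directions as they are used for $S_q^*$ in the proof of Theorem~\ref{thm:FFS}.
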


There are three fundamentally different choices of the positive rational generator~$q$ that yield a rational monogenic semidomain $S_q$ that is an FFS. We describe these choices in the following example.

\begin{exam} \label{ex:the three cases wehre S_q is FFS}
	The subset of $\qq_{>0}$ consisting of all the positive rational values of the paramter $q$ such that the monogenic semidomain $S_q$ is an FFS can be conveniently expressed as the union of three (non-disjoint) subsets, each of them governed by a condition on $q$ that allows us to argue the FF property in a somehow natural way:
	\begin{enumerate}
		\item $q \ge 1$. For instance, when $q = 3/2$ the semidomain $S_q$ is an FFM because it is an increasing positive monoid of an ordered field (see \cite[Theorem~5.6]{fG19}).
		\smallskip
		
		\item $q$ is a unit fraction. For instance, for $q = 1/6$ the semidomain $S_q$ is a UFS and, therefore, an FFS (this will be argued in the next section).
		\smallskip
		
		\item $\den(q)$ is a prime power. For instance, for $q = 2/9$ the semidomain $S_q$ is an FFS even though the conditions in the previous two parts do not hold.
	\end{enumerate}
	Indeed, the previous three conditions are the typical cases one has to consider to determine the positive rational generators $q$ such that $S_q$ is an FFS. In particular, for rational generators in $(0,1)$ the semidomian $S_q$ is an FFS only in last two cases mentioned: either all denominator primes become units (as for the $q=1/6$ case) or the denominator of $q$ is supported at a single prime (as for the $q=2/9$ case).
	\begin{center}
	\renewcommand{\arraystretch}{1.2}
	\begin{tabular}{@{}c@{\qquad}l@{\qquad}c@{}}
		\hline
		$q \in \qq_{>0}$ & \multicolumn{1}{c}{Reason} & $S_q$ is an FFS \\ 
		\hline
		$3/2$ & $q \ge 1$ & yes \\
		$1/6$ & $1/q = 6 \in \nn_{\ge 2}$  & yes \\
		$2/9$ & $\den(q)=9$ is a prime power & yes \\
		\hline
		$5/6$ & $q$ \text{satisfies none of the above} & no \\
		\hline \hline
	\end{tabular}
	\end{center}
	\hfill $\blacksquare$
\end{exam}

We proceed to prove that the rational monogenic semidomain $S_q$ satisfies the FF property precisely in the first three cases indicated in Example~\ref{ex:the three cases wehre S_q is FFS}.

\begin{thm} \label{thm:FFS} \label{lem:non-prime-power denominator infinite divisors} \label{lem:prime-power-denominator finite divisors}
	For $q \in \qq_{>0}$, let $S_q$ be the rational monogenic semidomain generated by $q$. Then $S_q$ is an FFS if and only if at least one of the following conditions holds:
	\begin{itemize}
		\item $q \ge 1$,
		\smallskip
		
		\item $q$ is a unit fraction, or
		\smallskip
		
		\item $\mathsf{d}(q)$ is a prime power.
	\end{itemize}
\end{thm}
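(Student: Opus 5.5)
The plan is to use Lemma~\ref{lem:finite divisors imply FFM}: $S_q^*$ is an FFM if and only if every nonzero $r \in S_q$ has only finitely many divisors up to associates. Outside the unit fraction case, $S_q$ is reduced by Proposition~\ref{prop:units of N_0[q]}, so in that range it suffices to count divisors themselves. I will prove sufficiency by treating the three listed conditions separately, and then prove necessity by exhibiting an element with infinitely many divisors.

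\emph{Sufficiency.} If $q \ge 1$, then $S_q^* \subseteq \rr_{\ge 1}$, so every divisor of $r$ lies in $[1,r]$. Moreover, an element of $S_q$ that is at most $r$ is a sum $\sum c_j q^j$ in which only the finitely many generators $q^j \le r$ can occur, each with coefficient $c_j \le r$. Hence $S_q \cap [0,r]$ is finite. If $q$ is a unit fraction, then $S_q$ is a UFS by Theorem~\ref{thm:UF/HF}, hence an FFS. In the remaining case, $q<1$ is not a unit fraction and $\den(q)=p^e$ for a prime $p$. By Lemma~\ref{lem:lower bound on divisor}, every divisor $s$ of $r$ lies in $(q^m, r/q^m)$ for some $m$ depending only on $r$. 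Since every element of $S_q$ has denominator a power of $p$, it is enough to bound $-v_p(s)$; together with the bound $s<r/q^m$, this leaves only finitely many possibilities for $s$. Write $s' := r/s \in S_q$. If $v_p(s')>0$, then $s'$ is an integer divisible by $p^{v_p(s')}$ and smaller than $r/q^m$, so $v_p(s') \le \log_p(r/q^m)$. In every case, therefore, $v_p(s) = v_p(r) - v_p(s') \ge v_p(r) - \log_p(r/q^m)$, which is a bound independent of $s$.

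\emph{Necessity.} Assume $q \in (0,1)$, $q$ is not a unit fraction, and $\den(q)$ is not a prime power; then $S_q$ is reduced. I will reuse the construction in the proof of Proposition~\ref{lem:inf T > 1}(2). That proof produces technical fractions $r_k \in T_q$ with $1 + B_q q^k < r_k < 1 + 2B_q q^k$, where $B_q := B_{q,\infty}$. Since $r_k>1$ and $r_k \to 1$, infinitely many of the $r_k$ are pairwise distinct. Fix an integer $t \ge 2B_q$ with $t \ge 1$; then $t \in S_q$. The quotient $t/r_k$ has denominator supported on primes dividing $\den(q)$, because $r_k$ is a ratio of powers of primes dividing $\den(q)$, so it can be written as $c/\den(q)^K$ with $c,K \in \nn$. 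For large $k$ we also have $t/r_k > t/(1+2B_q q^k) \ge B_q$. Corollary~\ref{cor:Frobenius bound for q<1} then gives $t/r_k \in S_q$. Hence $t$ has infinitely many distinct divisors $r_k$, so $S_q$ is not an FFS by Lemma~\ref{lem:finite divisors imply FFM}.

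I expect the main obstacle to be the prime-power case of sufficiency. The key point there is that a single denominator prime lets the $p$-adic valuation of the cofactor control the denominator of every divisor, and this is exactly the mechanism that fails once two denominator primes are available. The necessity direction is mostly bookkeeping on top of Proposition~\ref{lem:inf T > 1}. Its one subtle point is checking that $t/r_k$ has the form $c/\den(q)^K$ required by Corollary~\ref{cor:Frobenius bound for q<1}.
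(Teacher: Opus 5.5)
Your proof is correct. It has the same skeleton as the paper's: the divisor criterion of Lemma~\ref{lem:finite divisors imply FFM}, a three-case sufficiency argument, and a fixed integer with infinitely many divisors for necessity. Two steps, however, go differently.

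\begin{itemize}
\item \emph{The case $q \ge 1$.} The paper transports $S_q^*$ to the additive monoid $\ln(S_q^*)$ and invokes the FF property of increasing positive monoids. Your observation that $S_q \cap [0,r]$ is finite bounds the divisors of $r$ directly, so your argument is self-contained.

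\item \emph{Necessity.} The paper never needs elements near $1$. It fixes $n_0 > B_{q,\infty}^2$ and uses only the density of $\{p_1^i/p_2^j : i,j \in \nn_0\}$ to find infinitely many $r \in (B_{q,\infty}, n_0/B_{q,\infty})$. Then both $r$ and $n_0/r$ exceed $B_{q,\infty}$, so both lie in $S_q$ by Corollary~\ref{cor:Frobenius bound for q<1}, and no congruence argument is needed. Your route instead imports the technical fractions $r_k \to 1$ from Proposition~\ref{lem:inf T > 1}(2). Their membership in $S_q$ relies on the multiplicative-order construction modulo $\num(q)^k$, which makes your route heavier. In exchange, it ties the failure of the FF property to the accumulation of $T_q \setminus \{1\}$ at $1$, the same mechanism behind the ACCP characterization in Theorem~\ref{thm:bfs = accp = inf T > 1}.
\end{itemize}

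In the prime-power case the difference is minor. The paper bounds the prime-to-$p$ part of a divisor $s$ via $0 \le v_{p'}(s) \le v_{p'}(r)$, then uses the interval from Lemma~\ref{lem:lower bound on divisor} to bound the power of $p$. Your lower bound on $v_p(s)$ through the cofactor $r/s$ is an equally valid variant.
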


\begin{proof}
	For the direct implication, assume that $q$ does not satisfy any of the three conditions. Then~$q$ is a rational number inside the interval $(0,1)$ with $n := \mathsf{n}(q) \ge 2$ such that $d := \mathsf{d}(q)$ has at least two distinct prime factors, which allows us to pick distinct $p_1, p_2 \in \pp$ such that $p_1 \mid d$ and $p_2 \mid d$. By Lemma~\ref{lem:denseness of powers}, the set
	\[
		R := \big\{p_1^i/p_2^j : i,j \in \nn_0\big\}
	\]
	is dense in $\rr_{>0}$. Let $B_{q,\infty} = d(n-1)/(d-n)$ be the constant introduced in~\eqref{eq:Bq infinite} and choose $n_0 \in \nn$ such that $n_0 > B_{q,\infty}^2$. Then the open interval $(B_{q,\infty}, n_0/B_{q,\infty})$ contains infinitely many distinct elements of the set~$R$. For each $r \in R \cap (B_{q,\infty}, n_0/B_{q,\infty})$, every prime factor of the denominator of~$r$ and of the denominator of $n_0/r$ divides $d$, and
	\[
		\min\{r, n_0/r\} > B_{q,\infty}.
	\]
	After rewriting $r$ and $n_0/r$ with denominators that are powers of $d$, it follows from Corollary~\ref{cor:Frobenius bound for q<1} that both $r$ and $n_0/r$ belong to~$S_q^*$. Thus, every such $r$ is a divisor of the fixed element $n_0$ in $S_q^*$. Since there are infinitely many distinct such divisors and $S_q^*$ is reduced, the element $n_0$ has infinitely many divisors up to associates. Hence Lemma~\ref{lem:finite divisors imply FFM} implies that $S_q^*$ is not an FFM. As a consequence, the semidomain $S_q$ is not an FFS.
	\smallskip
	
	To prove the reverse implication, we assume that $q$ satisfies at least one of the three conditions in the statement of the theorem and then we split the rest of the proof into three cases, a case for each of the stated conditions.
	\smallskip
	
	\textsc{Case 1:} $q \ge 1$. Observe that the set $S_q \setminus \qq_{\ge m}$ is finite for every $m \in \nn$. Therefore we can list the elements of $S_q^*$ as a strictly increasing sequence $(s_n)_{n \ge 1}$. The map
	\[
		\ln \colon S_q^* \to \ln(S_q^*)
	\]
	is a monoid isomorphism from $S_q^*$ to the additive monoid $\ln(S_q^*)$. Thus, the monoid $\ln(S_q^*)$ is additively generated by the increasing sequence $(\ln s_n)_{n \ge 1}$, which implies that $\ln(S_q^*)$ is an increasing positive monoid of the ordered field~$\rr$. Hence $\ln(S_q^*)$ is an FFM by virtue of \cite[Theorem~5.6]{fG19}. Hence it follows that $S_q$ is an FFS.
	\smallskip
	
	\textsc{Case 2:} $q$ is a unit fraction. In this case, it follows from Lemma~\ref{lem:if q is integer or unit fraction S_q is UFS} that the semidomain $S_q$ is a UFS and, therefore, $S_q$ must be an FFS.
	\smallskip
	
	\textsc{Case 3:} $\mathsf{d}(q)$ is a prime power. Given that we have already proved the previous cases, we can assume, without loss of generality, that $q \in (0,1)$ and also that $q$ is not a unit fraction. Take $p \in \pp$ and then take $n,k \in \nn$ with $q = n/p^k$ such that $p \nmid n$. Since $q$ is not a unit fraction, $n \ge 2$. Then it follows from Proposition~\ref{prop:units of N_0[q]} that $S_q$ is a reduced semidomain.
	
	Let us prove that every element of the monoid $S_q^*$ has only finitely many divisors. For this, fix $r \in S_q^*$. By virtue of Lemma~\ref{lem:lower bound on divisor}, we can take $\ell,u \in \rr_{>0}$ with $\ell < u$ such that all divisors of $r$ in $S_q^*$ belong to the interval $(\ell,u)$. Let $s$ be a divisor of $r$ in $S_q^*$. Since $s \in S_q^*$, we can write $s = n_s/p^e$ for some $n_s \in \nn$ and $e \in \nn_0$. If $p' \in \pp \setminus \{p\}$ then $v_{p'}(s) \ge 0$, while the divisibility relation $s \mid_{S_q} r$ ensures that $v_{p'}(s) \le v_{p'}(r)$. Therefore the part of~$s$ supported away from $p$ has only finitely many possible values. In addition, for each such value, the condition $s \in (\ell,u)$ allows only finitely many possible powers of $p$. Hence $r$ has only finitely many divisors in $S_q^*$. Hence $S_q^*$ is an FFM in light of Lemma~\ref{lem:finite divisors imply FFM}. Hence the semidomain $S_q$ is an FFS.
\end{proof}

\medskip
\subsection{The Bounded Factorization Property}

We now consider the BF property. We prove that a rational monogenic semidomain is a BFS if and only if it satisfies the ACCP. We also provide a characterization in terms of the monoid of technical fractions. By convention, let us assume that the infimum of the empty set is infinite and so greater than any positive integer.

\begin{thm}\label{thm:bfs = accp = inf T > 1}
	For $q \in \qq_{>0}$ consider the monogenic semidomain $S_q$ generated by $q$. If $q$ is not a unit fraction then the following conditions are equivalent.
	\begin{enumerate}
		\item[(a)] $S_q$ is a BFS.
		\smallskip
		
		\item[(b)] $S_q$ satisfies the ACCP.
		\smallskip
		
		\item[(c)] $\inf (T_q \setminus \{1\}) > 1$.
	\end{enumerate}
\end{thm}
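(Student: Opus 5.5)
The plan is to prove the cycle (a) $\Rightarrow$ (b) $\Rightarrow$ (c) $\Rightarrow$ (a). Along the way I would identify (c) concretely: for $q$ not a unit fraction, condition (c) should hold exactly when $q \ge 1$ or $\den(q)$ is a prime power. The implication (a) $\Rightarrow$ (b) is the general fact, recalled in the background section, that every BFM satisfies the ACCP.

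For (c) $\Rightarrow$ (a), I would argue by contrapositive on the parameter. Suppose $q \in (0,1)$, $q$ is not a unit fraction, and $\den(q)$ is not a prime power. Then part~(2) of Proposition~\ref{lem:inf T > 1} gives $\inf(T_q \setminus \{1\}) = 1$, so (c) fails. Hence whenever (c) holds, either $q \ge 1$ or $\den(q)$ is a prime power. In both cases Theorem~\ref{thm:FFS} says $S_q$ is an FFS, and therefore a BFS. For completeness I would also check that (c) really does hold in these two cases, which confirms the identification above:
\begin{itemize}
\item If $q \ge 1$: $S_q$ has only finitely many elements below any bound. Together with $T_q \subseteq \qq_{\ge 1}$ from Proposition~\ref{lem:inf T > 1}(1), this makes $T_q \setminus \{1\}$ bounded away from~$1$ (or empty).
\item If $\den(q) = p^k$: every element of $T_q$ is a power $p^e$ with $e \in \zz$, and these elements are all at least~$1$. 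So every element of $T_q \setminus \{1\}$ is at least~$p$.
\end{itemize}

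For (b) $\Rightarrow$ (c), again by contrapositive, assume $\inf(T_q \setminus \{1\}) = 1$. By the previous paragraph this forces $q \in (0,1)$, $n := \num(q) \ge 2$, and $d := \den(q)$ not a prime power. By Proposition~\ref{prop:units of N_0[q]}, $S_q$ is reduced. I would then build an infinite strictly ascending chain of principal ideals as follows.
\begin{enumerate}
\item Pick $t_1, t_2, \dots \in T_q \setminus \{1\}$ with $1 < t_i < 1 + 2^{-i}$. Then every partial product satisfies $t_1 \cdots t_k < e$.
\item Fix $N \in \nn$ with $N > e \, B_{q,\infty}$, and set $s_k := N/(t_1 \cdots t_k)$.
\item Each $t_i$ lies in $\gp(p \in \pp : p \mid d)$ by Proposition~\ref{prop:submonoid of singular fractions}(2). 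So the denominator of $s_k$ involves only primes dividing $d$, and $s_k$ can be written as $c/d^m$ for some $c, m \in \nn$.
\item Since $s_k > N/e > B_{q,\infty}$, Corollary~\ref{cor:Frobenius bound for q<1} gives $s_k \in S_q$.
\end{enumerate}
Now $s_k = t_{k+1} s_{k+1}$, where $t_{k+1} \in S_q^*$ is a nonunit because $S_q$ is reduced and $t_{k+1} \neq 1$. Hence
\[
N S_q \subsetneq s_1 S_q \subsetneq s_2 S_q \subsetneq \cdots,
\]
so the ACCP fails.

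The step needing the most care is the membership $s_k \in S_q$. It rests on two things working together: the product $\prod t_i$ must stay bounded, which is why the $t_i$ approach~$1$ fast, and the denominators must stay supported on the primes of $d$, which is why the $t_i$ are taken in $T_q$. With both in place, Corollary~\ref{cor:Frobenius bound for q<1} applies uniformly in $k$.
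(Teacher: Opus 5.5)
Your proof is correct. The implication (b) $\Rightarrow$ (c) is essentially the paper's argument; the paper picks $1<t_n<2^{1/2^n}$, keeps the partial products below $2$ and takes $N>2B_{q,\infty}$, while your $1+2^{-i}$ with the bound $e$ works equally well. Your (c) $\Rightarrow$ (a), however, takes a genuinely different route.

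You argue through the parameter. Proposition~\ref{lem:inf T > 1}(2) shows that (c) fails when $q\in(0,1)$ and $\den(q)$ is not a prime power, so (c) forces $q\ge 1$ or $\den(q)$ a prime power. Theorem~\ref{thm:FFS} then gives the FF property, hence BF.

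The paper instead disposes of $q\ge1$ separately and proves (c) $\Rightarrow$ (a) directly for $q\in(0,1)$:
\begin{enumerate}
\item Along a strictly ascending chain $(r_nS_q^*)_{n\ge0}$, the values $\omega_q(r_n)$ are non-increasing, so they drop at most $\omega_q(r_0)$ times.
\item At every other step the cofactor lies in $T_q\setminus\{1\}$, so it is at least $\delta:=\inf(T_q\setminus\{1\})>1$.
\item All $r_n$ divide $r_0$, so they lie in a fixed interval $[\ell,u]$ by Lemma~\ref{lem:lower bound on divisor}. Hence runs of such steps have length roughly $\log(u/\ell)/\log\delta$.
\end{enumerate}

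Your route is shorter given the earlier results, and it yields more. It identifies (c) concretely as ``$q\ge1$ or $\den(q)$ is a prime power''. It also shows that, for $q$ not a unit fraction, BF and ACCP are in fact equivalent to FF. The cost is that it rests on the heavier Proposition~\ref{lem:inf T > 1}(2), with its multiplicative-order and density construction. It also rests on the full FF classification, including the external increasing-positive-monoid result used for $q\ge1$.

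The paper's argument is intrinsic. It never needs to know which $q$ satisfy (c), and it explains directly why a gap above $1$ in $T_q$ bounds chain lengths. It uses only the grading $\omega_q$ and the bounded-divisor lemma, so it could transfer to settings where no classification is available.
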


\begin{proof}
	First, assume that $q=1$ and, therefore, that $S_q = \nn_0$. Thus, $S_q$ trivially satisfies the BF and the ACCP properties. In addition, we have seen in Example~\ref{ex:T_q when q is a positive integer} that the monoid of technical fractions of $\nn_0$ is trivial and so $T_q \setminus \{1\}$ is the empty set. Hence $\inf(T_q \setminus \{1\}) = \infty > 1$.

	Now assume that $q>1$ and let us argue that in this case the three conditions also hold. Every element of $S_q^* \setminus \{1\}$ is at least $\min\{2,q\}$, whence
	\[
		\inf (T_q \setminus \{1\}) \ge \inf (S_q^* \setminus \{1\}) \ge \min\{2,q\}>1.
	\]
	On the other hand, we have already seen in the proof of Theorem~\ref{thm:FFS} that $S_q$ is an FFS provided that $q>1$, and this implies that $S_q$ is a BFS and, as a consequence, that $S_q$ satisfies the ACCP.
	\smallskip

	Therefore we may assume that $q \in (0,1)$ for the rest of the proof. Since $q$ is not a unit fraction, Proposition~\ref{prop:units of N_0[q]} ensures that the semidomain $S_q$ is reduced.
	\smallskip

	(a) $\Rightarrow$ (b): This follows from the standard fact that every BFM satisfies the ACCP.
	\smallskip

	(b) $\Rightarrow$ (c): Let $B_q := B_{q, \infty}$ be the constant introduced in \eqref{eq:Bq infinite}. Suppose, by way of contradiction, that $\inf(T_q \setminus \{1\})=1$. As $q$ is not a unit fraction, in light of Proposition~\ref{lem:inf T > 1}(1) we can choose a sequence $(t_n)_{n \ge 1}$ in $T_q \setminus \{1\}$ such that
	\[
		1<t_n<2^{1/2^n}
	\]
	for every $n \in \nn$, and then set $P_n := t_1\cdots t_n$ for each $n \in \nn_0$ (as $P_0$ is the empty product, $P_0=1$). For each $n \in \nn$,
	\[
		P_n = \prod_{j=1}^n t_j < \prod_{j=1}^n 2^{1/2^j} = 2^{1-1/2^n} < 2.
	\]
	Now fix $N \in \nn$ such that $N > 2B_q$, and observe that $N/P_n > N/2 > B_q$ for every $n \in \nn_0$.

	We claim that $N/P_n \in S_q^*$ for every $n \in \nn_0$. This is clear when $n=0$. Now fix $n \in \nn$, and observe that, for every $j \in \ldb 1,n \rdb$, the fact that $t_j \in T_q$ implies that every prime factor of $\num(t_j)\den(t_j)$ divides $\den(q)$. Hence the denominator of $N/P_n$ divides a power of $\den(q)$. If we write $N/P_n=c/\den(q)^k$ for some $c \in \nn$ and $k \in \nn_0$ then either $k=0$, in which case $N/P_n \in \nn \subseteq S_q^*$, or $k \in \nn$, in which case the inequality $N/P_n>B_q$ and Corollary~\ref{cor:Frobenius bound for q<1} guarantee that $N/P_n \in S_q^*$.

	For each $n \in \nn_0$, we can write
	\[
		\frac{N}{P_n}=\frac{N}{P_{n+1}}t_{n+1}.
	\]
	Thus, $N/P_{n+1}$ divides $N/P_n$ in $S_q^*$, and so the sequence $\big((N/P_n)S_q^*\big)_{n \ge 0}$ is an ascending chain of principal ideals of $S_q^*$. This chain ascends strictly at each step because $S_q^*$ is reduced and $t_{n+1} \ne 1$ for every $n \in \nn_0$. This contradicts that $S_q$ satisfies the ACCP. Hence $\inf (T_q \setminus \{1\}) > 1$.
		\smallskip

	(c) $\Rightarrow$ (a): Set $\delta:=\inf (T_q \setminus \{1\})$ and assume that $\delta>1$. We show that every strict ascending chain of principal ideals of $S_q^*$ has bounded length, which is enough to conclude that $S_q^*$ is a BFM. Let $(r_nS_q^*)_{n \ge 0}$ be a strict ascending chain of principal ideals of $S_q^*$. Thus, for each $n \in \nn_0$, we can take a nonunit $a_n \in S_q^*$ such that $r_n = a_n r_{n+1}$. After applying the monoid homomorphism $\omega_q$, we obtain that
	\[
		\omega_q(r_n) = \omega_q(a_nr_{n+1}) = \omega_q(a_n) + \omega_q(r_{n+1}) \ge \omega_q(r_{n+1}).
	\]
	Therefore $(\omega_q(r_n))_{n \ge 0}$ is a non-increasing sequence, and this implies that the number of indices $n$ for which $\omega_q(r_n)>\omega_q(r_{n+1})$ is at most $\omega_q(r_0)$.

	It remains to bound the number of indices for which $\omega_q(r_n)=\omega_q(r_{n+1})$. For such an index, the equality above forces $\omega_q(a_n)=0$, so $a_n \in T_q\setminus\{1\}$ and, as a result, $a_n \ge \delta$. Hence $r_{n+1}=r_n/a_n \le r_n/\delta$. By Lemma~\ref{lem:lower bound on divisor}, all divisors of $r_0$ in $S_q^*$ lie in some interval $[\ell,u]$ with $0<\ell<u$. Since each $r_n$ divides $r_0$, every $r_n$ lies in $[\ell,u]$. Consequently, any consecutive block of indices satisfying $\omega_q(r_n)=\omega_q(r_{n+1})$ has length at most $\log(u/\ell)/\log\delta+1$. Since there are at most $\omega_q(r_0)$ indices for which $\omega_q(r_n)>\omega_q(r_{n+1})$, the length of $(r_nS_q^*)_{n \ge 0}$ is bounded. Hence $S_q$ is a BFS.
\end{proof}

After the appearance of \cite{AAZ90}, the property of being atomic has been studied in connection with the factorization properties we investigated in this section. However, we were unable to determine whether there exists $q \in \qq$ such that $\nn_0[q]$ is not atomic. Therefore we conclude this section with the following question.

\begin{question}
	Is it possible to construct a rational monogenic semidomain that is not atomic?
\end{question}

\bigskip
\section{The Krull Property}
\label{sec:Krull property}

In this section, we first determine the rational monogenic semidomains whose multiplicative monoids are root-closed. Then we use this information to determine the rational monogenic semidomains that satisfy the Krull property. 

\medskip
\subsection{The Notion of Height}

We proceed to introduce the notion of height for elements of a rational monogenic semidomain. Fix a positive rational $q$ and then set $n := \num(q)$ and $d := \den(q)$. Let $S_q$ be the rational monogenic semidomain generated by $q$. Recall that $S_q$ is the nested union of the ascending chain $(M_{q,k})_{k \ge 0}$ of submonoids of $S_q$, where
\[
	M_{q,k} := \big\langle q^j : j \in \ldb 0,k \rdb \big\rangle
\]
for every $k \in \nn_0$. For each nonzero $r \in S_q$, we define the \emph{height} $h(r)$ of~$r$ as follows: $h(r) = 0$ if $r \in M_{q,0} = \nn_0$ and, otherwise, $h(r)$ is the unique $h \in \nn$ such that $r \in M_{q,h}$ but $r \notin M_{q,h-1}$. Let us take a look at the following example.

\begin{exam}
	For $q := 2/5$, we consider the rational monogenic semidomain $S_q$. We show that $r:=29/25$ belongs to $S_q$ and that $h(r) = 2$. We can deduce that $r \in S_q$ from the fact that $r = 29/25 = 1 + (2/5)^2$. To compute $h(r)$, first note that $r \notin M_{q,1} = \langle 1, 2/5 \rangle$ as $29/25$ cannot be written as a nonnegative integer combination of $1$ and $2/5$. Now observe that $M_{q,2} = \langle 1, 2/5, (2/5)^2 \rangle$, whence $r = 1 + (2/5)^2 \in M_{q,2}$. Hence $h(r)=2$.
	\hfill $\blacksquare$
\end{exam} 

In the following lemma we provide another description of the height.

\begin{lem} \label{lem:height characterization}
	For $q \in \qq_{>0}$, let $S_q$ be the rational monogenic semidomain generated by $q$. For each nonzero $r \in S_q$,
	\[
		h(r) = \min \Big\{ e \in \nn_0 : r = \frac{a}{\mathsf{d}(q)^e} \text{ for some } a \in \nn \Big\}.
	\]
\end{lem}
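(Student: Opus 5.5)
The plan is to show that both sides of the identity equal the least degree of a polynomial representation of $r$, and to argue directly from polynomial representations, with no case split on $q<1$ versus $q>1$. Write $q=n/d$ in lowest terms. If $d=1$ then $S_q=\nn_0$, so both sides are $0$; assume from now on that $d \ge 2$.

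First, I would note that, by the definition of the chain $(M_{q,k})_{k \ge 0}$, an element $r \in S_q^*$ lies in $M_{q,k}$ exactly when it has a polynomial representation of degree at most $k$. Hence $h(r)$ is the minimum degree $m$ of a polynomial representation of $r$ (with $m=0$ exactly when $r \in \nn$). Fix a representation $f(x)=\sum_{j=0}^m c_jx^j$ of $r$ of this minimal degree $m$, and set $e_0 := \min\{ e \in \nn_0 : d^e r \in \nn \}$. The goal is $e_0=m$, and I would prove the two inequalities separately.

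For $e_0 \le m$, observe that
\[
	d^m r=\sum_{j=0}^m c_j n^j d^{m-j} \in \nn,
\]
which immediately gives $e_0 \le m$. The inequality $e_0 \ge m$ is trivial when $m=0$, so assume $m \ge 1$.

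The key step is to show that $d \nmid c_m$, using the minimality of $m$. Suppose instead that $c_m=bd$ for some $b \in \nn$. Since $dq^m=nq^{m-1}$, replacing $c_mx^m$ by $bn\,x^{m-1}$ gives another polynomial representation of $r$, and its degree is at most $m-1$. This contradicts the choice of $m$, so $d \nmid c_m$.

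Now reduce the displayed equality modulo $d$: every term with $j<m$ contains a factor of $d$, so $d^m r \equiv c_m n^m \pmod d$. Because $\gcd(n,d)=1$ and $d \nmid c_m$, this residue is nonzero, so $d \nmid d^m r$. Consequently $d^{m-1}r \notin \zz$. If $d^e r$ were an integer for some $e<m$, then $d^{m-1}r = d^{m-1-e}(d^e r)$ would also be an integer, which is impossible. Therefore $e_0 \ge m$, and together with the first inequality this gives $e_0=m=h(r)$.

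The only delicate point is translating the definition of height into a statement about the minimum degree of a polynomial representation, and then using that minimality to rule out $d \mid c_m$. This degree-lowering replacement is the reverse of the move used for $q>1$ in Proposition~\ref{prop:optimal polynomial representations}. Working with a minimal-degree representation, rather than the optimal one, is what lets the argument handle $q<1$ and $q>1$ simultaneously: for $q>1$ the optimal representation can have a leading coefficient divisible by $d$, so it would not serve here.
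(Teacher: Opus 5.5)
Your proof is correct and follows essentially the same route as the paper. Both arguments identify $h(r)$ with the least degree of a polynomial representation, reduce $d^m r=\sum_j c_j n^j d^{m-j}$ modulo $d$, and use the replacement $c_m x^m \mapsto (c_m/d)\,n\,x^{m-1}$ to contradict minimality of the degree; the only difference is that the paper assumes some $e<\deg f_r$ lies in the set and derives $d \mid c_{\deg f_r}$, whereas you first establish $d \nmid c_m$ and then conclude $d^{m-1}r \notin \zz$ directly.
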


\begin{proof}
	Set $n := \num{q}$ and $d := \den(q)$. Fix a nonzero $r \in S_q$. If $d=1$ then $M_{q,k} = S_q = \nn_0$ for every $k \in \nn_0$ and so $h(r)=0$, from which we immediately obtain the desired equality. Thus, we can assume that $d>1$. Set
	\[
		E(r):= \Big\{ e \in \nn_0 : r = \frac{m}{d^e} \text{ for some } m \in \nn \Big\}.
	\]
	As $r \in S_q$, the set $E(r)$ is nonempty. Set $e(r) := \min E(r)$. We can readily check that $e(r) \le h(r)$: indeed, as $r \in M_{q,h(r)} = d^{-h(r)}N_{q,h(r)}$ and $r \neq 0$, we can take $m \in N_{q,h(r)}$ such that $r = m/d^{h(r)}$ for some $m \in \nn$. It remains to prove that $h(r) \le e(r)$. For this, we first pick a polynomial representation $f_r(x) \in \nn_0[x]$ of $r$ and write
	\begin{equation}\label{eq:polynomial rep of r}
		f_r(x) = \sum_{i=0}^{\deg f_r} c_i x^i
	\end{equation}
	for some $c_0, \dots, c_{\deg f_r} \in \nn_0$. We can further assume that the degree of $f_r(x)$ is as small as it can possibly be. In the following claim, we prove that $\deg f_r$ is a lower bound for the set $E(r)$.
	\smallskip
	
	\noindent \textsc{Claim.} $\deg f_r \le e(r)$.
	\smallskip
	
	\noindent \textsc{Proof of Claim.} Set $e_0 := \deg f_r$ and assume, towards a contradiction, that $e_0 > e$ for some $e \in E(r)$. Write $r = m/d^e$ for some $m \in \nn$. Given this assumption $e_0 > e$, we infer that $e_0 \ge 1$. Since $m/d^e = r = f_r(q)$, after multiplying both sides of~\eqref{eq:polynomial rep of r} by $d^{e_0}$ and evaluating at $x=q$, we obtain
	\[
		md^{e_0-e} = c_{e_0} n^{e_0} + \sum_{j=0}^{e_0 - 1} c_j n^j d^{e_0-j}.
	\]
	Reducing this equality modulo $d$ yields $d \mid c_{e_0} n^{e_0}$. As $\gcd(n,d) = 1$, it follows that $d \mid c_{e_0}$. Write $c_{e_0} = ds$ and replace the term $c_{e_0}x^{e_0}$ in $f_r(x)$ by $snx^{e_0-1}$ without changing the value at~$q$, and after doing so we obtain another polynomial representation of $r$ with smaller degree. However, this contradicts the minimality of the degree of $f_r(x)$. Hence the claim is established.
	\smallskip
	
	Finally, as $h(r)$ is the height of $r$, it follows that $r \in M_{q,h(r)} = \sum_{j=0}^{h(r)} \nn_0 q^j$ but $r \notin \sum_{j=0}^{h(r)-1} \nn_0 q^j$. Thus, no polynomial representation of $r$ has degree strictly less than $h(r)$. Therefore $h(r) \le \deg f_r$. On the other hand, it follows from the established claim that $\deg f_r \le e(r)$. Hence $h(r) \le \deg f_r \le e(r)$. This, along with the inequality $e(r) \le h(r)$, which we previously established, allows us to conclude that $h(r)=e(r)$.
\end{proof}

\medskip
\subsection{Root Closedness}

Fix $q \in \qq_{>0}$ and set $n := \num{q}$ and $d := \den{q}$. Before delving into the root closure of rational monogenic semidomains, let us recall the sequence $(B_{q,k})_{k \ge 1}$ and the constant $B_{q,\infty}$ introduced in Section~\ref{sec:algebraic aspects}: if $q \in \qq_{>0} \setminus (\nn \cup \nn_{\ge 2}^{-1})$ then, for each $k \in \nn$,
\[
	B_{q,k} := \frac{\mathsf{f}_{q,k}}{d^k}  = \frac{d(n-1) - n q^k(d-1)}{d-n},
\]
where $\mathsf{f}_{q,k}$ is the Frobenius number of the numerical monoid $d^k M_{q,k}$. When $q \in (0,1) \cap \qq$, the limit of the sequence $(B_{q,k})_{k \ge 1}$ exists and is denoted $B_{q,\infty}$:
\[
	B_{q,\infty} = \lim_{k \to \infty} B_{q,k} = \frac{d(n-1)}{d-n}.
\]
In addition, we have already established the following lemma.

\begin{lem} \cite{CM24} \label{lem:Frobenius bound copy}
	For $q \in \qq_{>0} \setminus (\nn \cup \nn_{\ge 2}^{-1})$, we let $S_q$ be the rational monogenic semidomain generated by~$q$. Then the following statements hold.
	\begin{enumerate}
		\item For each $k \in \nn$, if $c/\mathsf{d}(q)^k > B_{q,k}$ for some $c \in \nn$ then $c/\mathsf{d}(q)^k \in S_q$.
		\smallskip
		
		\item If $q \in (0,1)$ then, for each $k \in \nn$, if $c/\mathsf{d}(q)^k \ge B_{q,\infty}$ for some $c \in \nn$ then $c/\mathsf{d}(q)^k \in S_q$.
	\end{enumerate} 
\end{lem}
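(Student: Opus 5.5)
Since this lemma restates Lemma~\ref{lem:Frobenius bound} verbatim, the plan is to reprove it directly, using the Frobenius number of the associated numerical monoids and then the comparison with the limit constant. Write $q = n/d$ in lowest terms. The hypothesis $q \notin \nn \cup \nn_{\ge 2}^{-1}$ gives $d \ge 2$, and it also rules out $q=1$, so $d \neq n$ and the quotients defining $B_{q,k}$ make sense.

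For part~(1), fix $k \in \nn$ and $c \in \nn$ with $c/d^k > B_{q,k}$. Multiplying through by $d^k$ gives $c > d^k B_{q,k} = \mathsf{f}_{q,k}$. This is the Ong--Ponomarenko Frobenius number of $N_{q,k} = \langle n^j d^{k-j} : j \in \ldb 0,k \rdb \rangle$. The formula applies because $\gcd(n,d)=1$ makes $N_{q,k}$ a numerical monoid in the sense of \cite{OP08}. Since $c$ exceeds the Frobenius number, $c \in N_{q,k}$, so we can write $c = \sum_{j=0}^k c_j n^j d^{k-j}$ with $c_j \in \nn_0$. Dividing by $d^k$ gives $c/d^k = \sum_j c_j q^j \in M_{q,k} \subseteq S_q$.

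For part~(2), assume $q \in (0,1)$, so $d > n$. If $n = 1$, then $c/d^k = c\,q^k \in S_q$ trivially. Otherwise $n \ge 2$, and we compare the two constants. Since $d-n>0$, $n \ge 1$, $d-1>0$ and $q^k>0$, we have
\[
B_{q,k} = B_{q,\infty} - \frac{n q^k (d-1)}{d-n} < B_{q,\infty}.
\]
Hence $c/d^k \ge B_{q,\infty}$ implies the strict inequality $c/d^k > B_{q,k}$, and part~(1) gives membership.

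The only delicate point is applying the Frobenius formula correctly. We need $n,d \ge 2$ (or at least coprime and distinct) so that $N_{q,k}$ is a numerical monoid and the formula is valid. The case $q>1$ in part~(1) also needs checking: there $d-n<0$ in both the numerator and denominator, but the formula is symmetric in $n$ and $d$, so nothing changes. We therefore rely on the cited formula and do not reprove it.
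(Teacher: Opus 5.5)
Your proposal is correct and follows the paper's own proof essentially step for step. Part~(1) goes through $c > d^k B_{q,k} = \mathsf{f}_{q,k}$, hence $c \in N_{q,k}$, and then divides by $d^k$; part~(2) uses the strict inequality $B_{q,k} < B_{q,\infty}$ to reduce to part~(1). As in the paper, your separate treatment of $n=1$ in part~(2) is harmless but vacuous, since the hypothesis $q \notin \nn \cup \nn_{\ge 2}^{-1}$ already forces $\mathsf{n}(q) \ge 2$.
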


In order to determine the positive rational parameters $q$ such that $S_q^*$ is root-closed, it is convenient to prove the following proposition.

\begin{proposition} \label{prop:elems in root closure}
	For $q \in \qq_{> 0} \setminus (\nn \cup \nn_{\ge 2}^{-1})$, let $S_q$ be the rational monogenic semidomain generated by~$q$. Then the following statements hold.
	\begin{enumerate}
		\item If $q < 1$ then $m/\mathsf{d}(q)^k \in \overline{S^*_q}$ for all $k,m \in \nn$ such that $m/\mathsf{d}(q)^k > 1$.
		\smallskip
		
		\item If $q > 1$ then $m/\mathsf{d}(q)^k \in \overline{S^*_q}$ for all $k,m \in \nn$ such that $m/\mathsf{d}(q)^k > q^k$.
	\end{enumerate}
\end{proposition}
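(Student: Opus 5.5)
The plan is to show directly that a suitable power $g^N$ of $g := m/\mathsf{d}(q)^k$ lies in $S_q^*$. The membership criterion will be Lemma~\ref{lem:Frobenius bound copy}. Write $d := \mathsf{d}(q)$ and $n := \mathsf{n}(q)$, and note that $\min\{n,d\} \ge 2$ because $q \notin \nn \cup \nn_{\ge 2}^{-1}$. For every $N \in \nn$,
\[
	g^N = \frac{m^N}{d^{kN}}
\]
is a fraction whose denominator is a power of $d$. So it suffices to show that $g^N$ exceeds the relevant Frobenius-type bound for some $N$.

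\emph{Case (1), $q<1$.} Since $g>1$, we have $g^N \to \infty$ as $N \to \infty$. Choose $N$ with $g^N \ge B_{q,\infty}$. Then Lemma~\ref{lem:Frobenius bound copy}(2), applied with exponent $kN$ and numerator $c = m^N$, gives $g^N \in S_q$. As $g^N \neq 0$, we get $g^N \in S_q^*$, and therefore $g \in \overline{S_q^*}$. We also note that $g \in \qq^\times = \gp(S_q^*)$, so $g$ is a legitimate element of the Grothendieck group. This case is essentially immediate.

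\emph{Case (2), $q>1$.} Here we use Lemma~\ref{lem:Frobenius bound copy}(1) with exponent $K := kN$, which requires
\[
	g^N > B_{q,kN} = \frac{d(n-1) - n q^{kN}(d-1)}{d-n} = \frac{n q^{kN}(d-1) - d(n-1)}{n-d}.
\]
The right-hand side is at most $C q^{kN}$, where $C := n(d-1)/(n-d)$ is a constant. The hypothesis $g > q^k$ gives $g^N / q^{kN} = (g/q^k)^N \to \infty$. Hence for $N$ large enough, $g^N > C q^{kN} \ge B_{q,kN}$. The lemma then gives $g^N \in S_q^*$, so $g \in \overline{S_q^*}$.

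The main point to check is exactly this growth comparison in case (2). The Frobenius bound $B_{q,K}$ grows like $q^K$, which is precisely why the threshold in the statement is $q^k$ rather than $1$. I will verify carefully that $B_{q,kN} \le C q^{kN}$ holds for all $N$, which is true since the subtracted term $d(n-1)/(n-d)$ is positive. I will also confirm that the lemma applies with denominator exactly $d^{kN}$ whether or not $m^N/d^{kN}$ is in lowest terms; it does, since the lemma is stated for arbitrary $c \in \nn$.
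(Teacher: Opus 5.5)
Your proposal is correct and follows the paper's proof essentially line for line. In case (1) you take a power of $g$ exceeding $B_{q,\infty}$ and apply part (2) of the Frobenius-bound lemma. In case (2) you dominate $B_{q,kN}$ by the same constant multiple $\frac{n(d-1)}{n-d}\,q^{kN}$ that the paper uses, and then apply part (1).
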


\begin{proof}
	Fix a positive rational $q$ such that $q \notin \nn \cup \nn_{\ge 2}^{-1}$, and then set $n := \mathsf{n}(q)$ and $d := \mathsf{d}(q)$.
	\smallskip
	
	(1) First, we assume that $q<1$. Take $k,m \in \nn$ such that $r := m/d^k > 1$. Since $r>1$, we can take $j \in \nn$ sufficiently large so that $r^j \ge (n-1)/(1-q)$. As
	\[
		\frac{m^j}{d^{kj}} = r^j \ge \frac{n-1}{1-q} = \frac{d(n-1)}{d-n} = B_{q,\infty},
	\]
	part~(2) of Lemma~\ref{lem:Frobenius bound copy} ensures that $r^j = m^j/d^{kj} \in S_q$. As a consequence, $r$ belongs to the root closure of $S_q^*$, as desired.
	\smallskip
	
	(2) Let us assume now that $q>1$. Take $k,m \in \nn$ such that $r := m/d^k > q^k$. Therefore $r/q^k > 1$ and so one can take $j \in \nn$ large enough so that
	\[
		\left(\frac{r}{q^k}\right)^j>\frac{n(d-1)}{n-d}.
	\]
	Then
	\[
		\frac{m^j}{d^{kj}} = r^j>\frac{n(d-1)}{n-d}q^{jk} > \frac{n(d-1)q^{jk}-d(n-1)}{n-d} = B_{q,jk}.
	\]
	Therefore it follows from part~(1) of Lemma~\ref{lem:Frobenius bound copy} that $r^j = m^j/d^{kj} \in S_q$. As a consequence, $r$ belongs to the root closure of the multiplicative monoid~$S_q^*$, which concludes our proof.
\end{proof}

Let us take a look at an example.

\begin{exam}
	For $q := 2/3$, we consider the rational monogenic semidomain $S_q$. First, notice that $B_{q,\infty} = 3(2-1)/(3-2) = 3$. Now set $r:=11/9$ and observe that $r \notin S_q$: indeed, if $r \in S_q$ then Lemma~\ref{lem:height characterization} would force $r$ to have a polynomial representation of degree at most $2$, and so there would be $c_0, c_1, c_2 \in \nn_0$ such that
	\[
		\frac{11}{9}=c_0+c_1\frac23+c_2\frac49
	\]
		or, equivalently, $11=9c_0+6c_1+4c_2$, which is not possible: indeed, any potential solution $(c_0, c_1, c_2)$ would have $c_0 \in \{0,1\}$ but no pair $(c_1, c_2) \in \nn_0^2$ satisfies that $11 = 6c_1 + 4c_2$ or $2 = 6c_1 + 4c_2$. On the other hand, $r$ belongs to the root closure of $S_q^*$: indeed, $r^6 = 11^6/3^{12} > 3 = B_{q,\infty}$, so Corollary~\ref{cor:Frobenius bound for q<1} gives $r^6 \in S_q^*$. Hence $S_{2/3}$ is not root-closed.
	\hfill $\blacksquare$
\end{exam}

We are in a position to determine the rational monogenic semidomains that are root-closed.

\begin{thm} \label{thm:root-closed cyclic semiring}
	For each $q \in \qq_{>0}$, let $S_q$ be the rational monogenic semidomain generated by $q$. Then the multiplicative monoid of the semidomain $S_q$ is root-closed if and only if $q \in \nn \cup \nn_{\ge 2}^{-1}$.
\end{thm}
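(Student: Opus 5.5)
The reverse implication is immediate from earlier results. If $q \in \nn \cup \nn_{\ge 2}^{-1}$, then Lemma~\ref{lem:if q is integer or unit fraction S_q is UFS} (equivalently Theorem~\ref{thm:UF/HF}) shows that $S_q$ is a UFS. Every UFM is a Krull monoid and hence root-closed, as recalled in Section~\ref{sec:background}. So $S_q^*$ is root-closed. (For $q=1$ one can also see directly that $S_1=\nn_0$, whose multiplicative monoid is root-closed.)

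For the direct implication, I argue by contrapositive. Assume $q \notin \nn \cup \nn_{\ge 2}^{-1}$ and write $n := \num(q)$, $d := \den(q)$, so that $\min\{n,d\} \ge 2$. I will exhibit an explicit element of $\overline{S_q^*} \setminus S_q^*$. Proposition~\ref{prop:elems in root closure} supplies elements of the root closure, and Lemma~\ref{lem:height characterization} lets us certify non-membership: if $r = m/d^k \in S_q$ with $\gcd(m,d)=1$, then $h(r) = k$. In that case $r \in M_{q,k}$, so $m = d^k r \in N_{q,k} = \langle n^j d^{k-j} : j \in \ldb 0,k \rdb \rangle$.

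The natural candidate is an element $r = m/d^k$ with $\gcd(m,d)=1$ and $m \notin N_{q,k}$. When $q<1$ we also need $r>1$; when $q>1$ we need $r>q^k$, that is, $m > n^k$. Proposition~\ref{prop:elems in root closure} then places $r$ in $\overline{S_q^*}$, while the membership criterion excludes it from $S_q^*$.

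\textsc{Case $q>1$.} Take $k=1$ and $m = n+1$. The generators of $N_{q,1}$ are $d$ and $n$, so $n+1 \in N_{q,1}$ would require $n+1 = ad + bn$. Since $n > d \ge 2$, we have $b \le 1$. If $b=1$, then $ad = 1$, which is impossible. If $b=0$, then $d \mid n+1$. To avoid this, I will choose $m \in \{n+1, n+2, \dots\}$ smaller than $\min\{2d, n+d\}$ with $d \nmid m$. Any such $m$ lies strictly between $n$ and the next representable values, so it is not a combination $ad+bn$, and $\gcd(m,d)=1$ can be arranged as well. The cleaner option is $m = n+1$ when $d \nmid n+1$, and otherwise $m=n+2$ or another small shift. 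This bookkeeping is the main step I expect to need care.

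\textsc{Case $q<1$.} Here $d > n \ge 2$. Take $k=1$ and $m = d+1$, so $r = (d+1)/d > 1$ and $\gcd(d+1,d)=1$. Then $d+1 \in N_{q,1} = \langle d, n \rangle$ requires $d+1 = ad + bn$. With $a=1$ this gives $bn = 1$, which is impossible; with $a=0$ it forces $n \mid d+1$. If $n \mid d+1$, I instead try $m = d+2$ (or some $m \in (d, 2d)$ coprime to $d$ avoiding multiples of $n$ and the values $d + bn$). The existence of such an $m$ is the main obstacle. If it fails for $k=1$, I would pass to larger $k$, using the Frobenius number $\mathsf{f}_{q,k}$. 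Specifically, take $m = \mathsf{f}_{q,k}$ itself, which is not in $N_{q,k}$ and is coprime to $d$ since $\mathsf{f}_{q,k} \equiv -n^{k+1}\cdot(\text{unit}) \pmod d$ by the explicit formula. We also need $\mathsf{f}_{q,k}/d^k = B_{q,k} > 1$ (respectively $> q^k$), which holds for large $k$ because $B_{q,k} \to d(n-1)/(d-n) > 1$ when $q<1$, and $B_{q,k} \sim \frac{n(d-1)}{n-d}q^k > q^k$ when $q>1$. This Frobenius choice is the uniform route I would actually adopt, and the only delicate verification is $\gcd(\mathsf{f}_{q,k}, d) = 1$.
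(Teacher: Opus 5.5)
Your Frobenius route is correct, and its skeleton is the paper's. The reverse direction goes through UFS $\Rightarrow$ root-closed. The forward direction exhibits $r=m/d^k$ that Proposition~\ref{prop:elems in root closure} places in $\overline{S_q^*}$ and Lemma~\ref{lem:height characterization} excludes from $S_q$. The difference is the choice of witness.

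The paper fixes $k$ with $\min\{n^k,d^k\}>2$ and takes $m\in\{d^k+1,d^k+2\}$ with $n\nmid m$ (resp.\ $m\in\{n^k+1,n^k+2\}$ with $d\nmid m$). The size condition is then trivial, and non-membership is a short divisibility argument. Any representation $m=\sum_j c_j n^j d^{k-j}$ must use the generator $d^k$ (as $n\nmid m$) and the generator $n^k$ (as $h(r)=k$), forcing $m\ge d^k+n^k$, which is too large.

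You instead take $m=\mathsf{f}_{q,k}$. Non-membership in $N_{q,k}$ then holds by definition, and the two cases are treated uniformly. The price is that the size condition now needs the closed formula for $\mathsf{f}_{q,k}$:
\begin{itemize}
\item For $q<1$ you need $B_{q,k}>1$ for large $k$. This holds because $B_{q,k}\uparrow d(n-1)/(d-n)$ and $d(n-1)-(d-n)=n(d+1)-2d>0$.
\item For $q>1$ you need $B_{q,k}>q^k$ for large $k$. This holds because $B_{q,k}/q^k\to n(d-1)/(n-d)$ and $n(d-1)-(n-d)=n(d-2)+d>0$.
\end{itemize}
You only asserted that these limits exceed $1$; include these one-line checks. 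For $q=2/3$ your witness at $k=2$ is $B_{q,2}=11/9$, which is exactly the paper's illustrative example.

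Two further remarks. First, the coprimality $\gcd(\mathsf{f}_{q,k},d)=1$ that you flag as the delicate point is true, since $\mathsf{f}_{q,k}\equiv -n^k \pmod d$, but it is not needed. If $m/d^k\in S_q$, then Lemma~\ref{lem:height characterization} gives $h(m/d^k)\le k$. So $m/d^k\in M_{q,k}$ because the chain $(M_{q,j})_{j\ge 0}$ is ascending, and hence $m\in N_{q,k}$ for any $m$.

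Second, delete the $k=1$ attempts rather than patch them, because they genuinely fail:
\begin{itemize}
\item For $q=2/3$, the monoid $N_{q,1}=\langle 2,3\rangle$ contains every integer $m>d=3$.
\item For $q=n/2$ with $n$ odd, every integer $m>n$ lies in $\langle 2,n\rangle$.
\end{itemize}
So no admissible $m$ exists at $k=1$ in these cases, which is precisely why both you and the paper must pass to large $k$.
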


\begin{proof}
	Fix a positive rational $q$ and then set $n := \mathsf{n}(q)$ and $d := \mathsf{d}(q)$.
	\smallskip
	First, we observe that the reverse implication follows immediately once we combine Theorem~\ref{thm:UF/HF} with the well-known fact that every UFM is root-closed.
	\smallskip

	For the direct implication, assume that $q \notin \nn \cup \nn^{-1}$, which means that $\min\{n,d\} \ge 2$. Let us argue that $S_q^*$ is strictly contained in $\overline{S_q^*}$. To do so, fix $k \in \nn$ such that $\min\{n^k,d^k\} > 2$. It is convenient to split the rest of the proof into the following two cases.
	\smallskip
	
	\noindent \textsc{Case 1:} $q<1$. In this case, $d > n \ge 2$. As $d \ge 3$, we see that $d \nmid d^k + 1$ and $d \nmid d^k + 2$. On the other hand, the fact that $d^k + 1$ and $d^k + 2$ are consecutive integers ensures that either $n \nmid d^k + 1$ or $n \nmid d^k + 2$. Now set $r := m/d^k$, where $m \in \{d^k + 1, d^k + 2\}$ and $n \nmid m$. Observe that $d \nmid m$ and $m/d^k > 1$. Therefore part~(1) of Proposition~\ref{prop:elems in root closure} ensures that $r = m/d^k \in \overline{S_q^*}$.
	
	Now assume, towards a contradiction, that $r \in S_q$. As $r \in \{1 + 1/d^k, 1 + 2/d^k\}$, we see that $r \notin M_{q,k-1}$, whence the height of $r$ is at least $k$. On the other hand, it follows from Lemma~\ref{lem:height characterization} that the height of $r$ is at most $k$. Hence $h(r) = k$. Thus, $r \in M_{q,k}$ but $r \notin M_{q,k-1}$. Because $n \nmid m$, any factorization of~$r$ in the additive monoid $(S_q,+)$ must contain the additive atom~$1$, while the fact that $r \notin M_{q,k-1}$ ensures that any factorization of~$r$ in the additive monoid $M_{q,k}$ must contain a copy of the atom $q^k$. Hence
	\[
		d^k + n^k = d^k(1 + q^k) \le d^kr = m \le d^k + 2.
	\] 
	Therefore $\min\{n^k, d^k\} = n^k \le 2$, which contradicts our choice of~$k$. Hence $r \notin S_q$, and so $S_q^*$ is strictly contained in $\overline{S_q^*}$.
	\smallskip
	
	\noindent \textsc{Case 2:} $q>1$. In this case, $n > d \ge 2$, and so $n \ge 3$. Since $n^k + 1$ and $n^k + 2$ are consecutive integers, there exists $m \in \{n^k + 1, n^k + 2\}$ such that $d \nmid m$. Also, $n \nmid m$. Set $r := m/d^k$. As $r > n^k/d^k = q^k$, part~(2) of Proposition~\ref{prop:elems in root closure} guarantees that $r \in \overline{S_q^*}$.
	
	As in the previous case, after assuming, towards a contradiction, that $r \in S_q$, one can similarly obtain that $h(r)=k$ and so that any factorization of~$r$ in the additive monoid $M_{q,k}$ must contain a copy of the atom~$1$ and a copy of the atom~$q^k$, whence $d^k + n^k \le d^kr = m \le n^k + 2$. Therefore $\min\{n^k, d^k\} = d^k \le 2$, which contradicts our choice of~$k$. Thus, $r \notin S_q$ and so $S_q^*$ is strictly contained in $\overline{S^*_q}$.
\end{proof}

As a consequence of Theorem~\ref{thm:root-closed cyclic semiring}, we are able to determine the rational parameters $q$ for which the semidomain $S_q$ has the Krull property.

\begin{cor}
	For $q \in \qq_{>0}$, let $S_q$ be the rational monogenic semidomain generated by~$q$. Then the following conditions are equivalent.
	\begin{enumerate}
		\item[(a)] $S_q$ is a Krull semidomain.
		\smallskip
		
		\item[(b)] $q \in \nn \cup \nn_{\ge 2}^{-1}$.
		\smallskip
		
		\item[(c)] $S_q$ is a UFS.
	\end{enumerate}
\end{cor}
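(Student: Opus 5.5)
The plan is to prove the cycle (a) $\Rightarrow$ (b) $\Rightarrow$ (c) $\Rightarrow$ (a), using only results already established in this and the previous section.

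For (a) $\Rightarrow$ (b), assume that $S_q$ is a Krull semidomain, so that $S_q^*$ is a Krull monoid. By definition a Krull monoid is completely integrally closed. The chain of inclusions $M \subseteq \bar{M} \subseteq \widehat{M}$ recorded in the background section therefore gives $\overline{S_q^*} = S_q^*$, that is, $S_q^*$ is root-closed. Theorem~\ref{thm:root-closed cyclic semiring} then yields $q \in \nn \cup \nn_{\ge 2}^{-1}$. This direction carries essentially all of the content, and it has already been handled in Theorem~\ref{thm:root-closed cyclic semiring}: the explicit elements $m/\mathsf{d}(q)^k$ constructed there lie in the root closure but not in $S_q$.

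For (b) $\Rightarrow$ (c), observe that $\nn \cup \nn_{\ge 2}^{-1} = \nn \cup \nn^{-1}$, since $1 \in \nn$. So this step is exactly the implication (c) $\Rightarrow$ (a) of Theorem~\ref{thm:UF/HF}, or equivalently Lemma~\ref{lem:if q is integer or unit fraction S_q is UFS}. Concretely, $S_q = \nn_0$ when $q \in \nn$. When $q = 1/d$, we have $S_q^* = S_q^\times F$ with $F$ free on the primes not dividing $d$.

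For (c) $\Rightarrow$ (a), I would invoke the standard fact, recalled in the background section, that every UFM is a Krull monoid. If one wants to verify it directly here: $S_q^*/S_q^\times$ is free, hence $v$-Noetherian and completely integrally closed, and both properties pass through the reduction modulo units.

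The only point needing care is bookkeeping. The paper's definitions of root-closed and completely integrally closed are taken inside $\gp(S_q^*) = \qq^\times$, so the inclusion $S_q^* \subseteq \overline{S_q^*} \subseteq \widehat{S_q^*}$ applies verbatim. I expect no genuine obstacle.
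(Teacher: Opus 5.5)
Your proposal is correct and is essentially the paper's own proof. Both run the cycle (a)$\Rightarrow$(b)$\Rightarrow$(c)$\Rightarrow$(a) with the same three ingredients: Krull $\Rightarrow$ completely integrally closed $\Rightarrow$ root-closed combined with the root-closedness theorem, the lemma that $S_q$ is a UFS for $q \in \nn \cup \nn_{\ge 2}^{-1}$, and the standard fact that every UFM is a Krull monoid. Your extra bookkeeping, namely $\nn \cup \nn_{\ge 2}^{-1} = \nn \cup \nn^{-1}$ and $\overline{S_q^*} \subseteq \widehat{S_q^*}$ inside $\qq^\times$, only makes explicit what the paper leaves implicit.
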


\begin{proof}
	(a) $\Rightarrow$ (b): First, assume that $S_q$ is a Krull semidomain or, equivalent, that the multiplicative monoid $S_q^*$ is a Krull monoid. Then $S_q^*$ is a completely integrally closed monoid, and so it is root-closed. Therefore $q \in \nn \cup \nn_{\ge 2}^{-1}$.
	\smallskip
	
	(b) $\Rightarrow$ (c): This is Lemma~\ref{lem:if q is integer or unit fraction S_q is UFS}
	\smallskip
	
	(c) $\Rightarrow$ (a): This follows straightforwardly as UFMs can be characterized as Krull monoids with trivial class group.
\end{proof}

\bigskip
\section*{Acknowledgments} 

During the preparation of this paper, the authors were part of the first CrowdMath Internship (CMI) at MIT, and they would like to thank the program for making this collaboration possible. The authors thank Dr. Marly Gotti for the careful review and for providing helpful feedback on earlier drafts of this paper. Finally, the second author kindly acknowledges support from the NSF under the award DMS-2213323.

\bigskip

\end{document}